\documentclass[a4paper,12pt,dvipdfm]{amsart}
\usepackage{amsmath}
\usepackage{amssymb}
\usepackage{enumerate}
\usepackage{amscd}
\usepackage{graphics}
\usepackage{latexsym}
\usepackage{verbatim} 
\usepackage{url}

\pagestyle{plain}
\theoremstyle{plain}
\newtheorem{thm}{{\bf Theorem}}[section]
\newtheorem{cor}[thm]{{\bf  Corollary}}
\newtheorem{prop}[thm]{{\bf Proposition}}
\newtheorem{lemma}[thm]{{\bf Lemma}}
\newtheorem{fact}[thm]{{\bf Fact}}
\newtheorem{claim}[thm]{{\bf Claim}}

\theoremstyle{definition}
\newtheorem{define}[thm]{{\bf Definition}}

\newtheorem{question}[thm]{{\bf Question}}

\newcommand{\cf}{\mathord{\mathrm{cf}}}

\newcommand{\dom}{\mathord{\mathrm{dom}}}
\newcommand{\size}[1]{\left\vert {#1} \right\vert}
\newcommand{\p}{\mathcal{P}}

\newcommand{\col}{\mathord{\mathrm{Col}}}

\newcommand{\seq}[1]{\langle {#1} \rangle}

\newcommand{\ka}{\kappa}
\newcommand{\la}{\lambda}
\newcommand{\om}{\omega}

\newcommand{\bbC}{\mathbb{C}}

\newcommand{\bbP}{\mathbb{P}}
\newcommand{\bbQ}{\mathbb{Q}}
\newcommand{\bbR}{\mathbb{R}}

\newcommand{\ZFC}{\mathsf{ZFC}}

\title[Generically extendible cardinals]{Generically extendible cardinals}
\author[T. Usuba]{Toshimichi Usuba}
%\date{\today}
\address[T. Usuba]
{Faculty of Science and Engineering,
Waseda University, 
Okubo 3-4-1, Shinjyuku, Tokyo, 169-8555 Japan}
\email{usuba@waseda.jp}
\keywords{Boolean valued second order logic, Extendible cardinal,  Generically extendible cardinal,
Generic large cardinal, Virtual large cardinal}
\subjclass[2020]{Primary 03E40, 03E55, 03E57}
\begin{document}

\begin{abstract}
In this paper, 
we study the notion of a generically extendible cardinal, which is a generic version of an extendible cardinal. We prove that the generic extendibility of $\om_1$ or $\om_2$ has 
small consistency strength, but that of a cardinal $>\om_2$ does not.
We also consider some results concerned with generically extendible cardinals, such as
indestructibility, generic absoluteness of the reals, and Boolean valued second order logic.
\end{abstract}
\maketitle

\section{Introduction}
A \emph{large cardinal} is usually defined by
the existence of an elementary embedding from some transitive (set or class) model $M$
into a transitive (set or class) model $N$.
A \emph{generic large cardinal} has a similar definition, but it is characterized by the existence of a \emph{generic  elementary embedding}:
An elementary embedding $j:M \to N$ where $M$ is a model lies in $V$, but
$j$ and $N$ are living in a certain generic extension.
The following are typical examples of generic large cardinals:
\begin{enumerate}
\item A cardinal $\ka$ is \emph{generically measurable}
if there is a poset $\bbP$ which forces that
there is a transitive model $N$ and an elementary embedding $j:V \to N$
with critical point $\ka$.
\item A cardinal $\ka$ is \emph{generically strong}
if for every $\alpha>\ka$, there is a poset $\bbP$ which forces that
there is a transitive model $N$ with $V_\alpha^{V^\bbP} \subseteq N$ and an elementary embedding $j:V \to N$
with critical point $\ka$ and $\alpha<j(\ka)$.
\end{enumerate}
In contrast with  usual large cardinals,
generic large cardinals could be small, e.g.,
it is consistent that 
$\om_1$ is generically measurable.

Recall that, a cardinal $\ka$ is \emph{extendible}
if for every $\alpha>\ka$,
there is $\beta$ and an elementary embedding $j:V_\alpha \to V_\beta$
with critical point $\ka$ and $\alpha<j(\ka)$.
Extendible cardinal is known as one of very strong large cardinals.
In the spirit of generic large cardinals,
we can naturally define the notion of \emph{generically extendible cardinal}
in the following manner.
\begin{define}
An uncountable cardinal $\ka$ is \emph{generically extendible}
if for every $\alpha>\ka$,
there is a poset $\bbP$ which forces the following:
There is  $\beta>\alpha$ and an elementary embedding $j:V_\alpha \to V_\beta^{V^\bbP}$
with critical point $\ka$ and $\alpha<j(\ka)$.
%(where $G$ is a ($V,\bbP$)-generic filter).
\end{define}
This notion is introduced by
Ikegami \cite{IV2} for the study of
\emph{the Boolean valued second order logic} (see Ikegami-V\"a\"an\"anen \cite{IV}
and Ikegami \cite{IV2}).

The consistency strength of known generic large cardinals is close to the original ones:
For example, the consistency of the existence of a generic measurable cardinal
is equivalent to a measurable cardinal.
However, our generic extendible cardinal is drastically weaker than
an extendible cardinal; Suppose there is a proper class of Woodin cardinals.
Then the stationary tower forcing (see Larson \cite{L}) exemplifies
that \emph{every} regular uncountable cardinal is generically extendible.
By this observation, Ikegami asked the following question:
\begin{question}
How large is the consistency strength of the existence of a generically extendible cardinal?
Does the existence of a generically extendible cardinal imply $0^{\#}$?
\end{question}

The main result of this paper is an answer to this question.
We show that 
the consistency strength of the existence of a generically extendible cardinal is not so strong.
In fact, it is equiconsistent with \emph{virtually extendible cardinal}
introduced in Bagaria-Gitman-Schindler \cite{BGS}
and Gitman-Schindler \cite{GS}

\begin{define}[Bagaria-Gitman-Schindler \cite{BGS}, Gitman-Schindler \cite{GS}]
An uncountable cardinal $\ka$ is \emph{virtually extendible}
if for every $\alpha>\ka$,
there is a poset $\bbP$ which forces the following:
There is $\beta$ and an elementary embedding $j:V_\alpha \to V_\beta$
with critical point $\ka$ and $\alpha<j(\ka)$.
\end{define}
Note that, in the definition of virtually extendible,
an elementary embedding $j:V_\alpha \to V_\beta$  is living in a generic extension,
but the domain $V_\alpha$ and the target $V_\beta$ are in $V$.

In \cite{BGS} and \cite{GS},
they proved that if $\ka$ is virtually extendible, then so is in $L$.
Hence the existence of a virtually extendible cardinal 
is consistent with $V=L$, in particular 
it does not imply $0^{\#}$.
Moreover, they proved that 
every Silver indiscernible is virtually extendible in $L$.

The following theorem shows that the consistency strength of the generic extendibility of $\om_1$ or $\om_2$
is not so strong, and it does not imply $0^{\#}$.
\begin{thm}\label{thm1}
The following theories are equiconsistent:
\begin{enumerate}
\item $\ZFC$+``there exists a virtually extendible cardinal''.
\item $\ZFC$+``there exists a generically extendible cardinal''.
\item $\ZFC$+``$\om_1$ is generically extendible''.
\item $\ZFC$+``$\om_2$ is generically extendible''.
\end{enumerate}
\end{thm}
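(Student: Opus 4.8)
The plan is to run the cycle $(1)\Rightarrow(3)\Rightarrow(2)\Rightarrow(1)$ together with $(1)\Rightarrow(4)\Rightarrow(2)$. The implications $(3)\Rightarrow(2)$ and $(4)\Rightarrow(2)$ are trivial, since a generically extendible $\om_1$ (resp.\ $\om_2$) is in particular a generically extendible cardinal. Thus the content lies in $(2)\Rightarrow(1)$ and in the two collapse arguments $(1)\Rightarrow(3)$, $(1)\Rightarrow(4)$.

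For $(2)\Rightarrow(1)$ I would show that generic extendibility of $\ka$ yields virtual extendibility of $\ka$ in $L$. Given $\alpha$, fix a poset forcing an elementary $j:V_\alpha\to V_\beta^{V[G]}$ with critical point $\ka$ and $\alpha<j(\ka)$. Since $L$ is absolute, $L^{V_\beta^{V[G]}}=L_\beta$, so $j\restriction L_\alpha:L_\alpha\to L_{j(\alpha)}$ is an elementary embedding with the same critical data, and $L_\alpha,L_{j(\alpha)}$ are computed identically in every transitive model. The existence of such an embedding between these two fixed $L$-levels is, once one collapses $L_{j(\alpha)}$ to be countable, a $\Sigma^1_1$ (hence Mostowski-absolute) fact about codes for the structures; as it is witnessed in $V[G]$ (after a further collapse there), it holds in $L[\col(\om,L_{j(\alpha)})]$. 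Hence $L$ thinks $\col(\om,L_{j(\alpha)})$ forces an elementary $L_\alpha\to L_{j(\alpha)}$ with critical point $\ka$ and $\alpha<j(\ka)$; as $\alpha$ is arbitrary (and the critical points force $\ka$ inaccessible in $L$), $\ka$ is virtually extendible in $L$, giving the consistency.

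For $(1)\Rightarrow(3)$, assume $V\models\ZFC+\mathrm{GCH}$ with $\ka$ virtually extendible, and force with $\bbC=\col(\om,<\ka)$ to obtain $V[G]$ with $\om_1=\ka$. Fix $\alpha>\ka$. Choose a witnessing virtual embedding $\hat j:V_\alpha\to V_\beta$ with critical point $\ka$, and put $\mu:=\hat j(\ka)>\alpha$ and $\beta:=\hat j(\alpha)>\mu$. Now force over $V[G]$ with $\col(\om,[\ka,\mu))$, so that $V[G][H]=V[G^+]$ where $G^+$ is $\col(\om,<\mu)$-generic and $\om_1^{V[G][H]}=\mu$. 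The key observation is that $V_\alpha^{V[G]}=V_\alpha^V[G]$ is countable in $V[G^+]$, since $\alpha<\mu$. Moreover, in a common further extension materialising $\hat j$, the lift of $\hat j$ along $\bbC\to\hat j(\bbC)=\col(\om,<\mu)$ (legitimate because $\hat j[G]=G\subseteq G^+$) is an elementary embedding $V_\alpha^{V[G]}\to V_\beta^V[G^+]=V_\beta^{V[G^+]}$ with critical point $\ka$ and $\alpha<\mu=\hat j(\ka)$; note that, because $\beta>\mu$, its target is genuinely $V_\beta$ of the full extension. Consequently the height-$\om$ tree of finite approximations (along an enumeration of the countable $V_\alpha^{V[G]}$) to an elementary embedding into $V_\beta^{V[G^+]}$ fixing $\ka^{-}$, moving $\ka$ above $\alpha$, is ill-founded; and ill-foundedness of a set tree is absolute, so a branch already lies in $V[G^+]$. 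This branch is an elementary $j:V_\alpha^{V[G]}\to V_\beta^{V[G][H]}$ in $V[G][H]$ with critical point $\om_1=\ka$ and $\alpha<j(\om_1)$, witnessing generic extendibility of $\om_1$.

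For $(1)\Rightarrow(4)$ I would run the same scheme, but collapse $\ka$ to $\om_2$ using the countably closed $\col(\om_1,<\ka)$ (here $\mathrm{GCH}$ is used), again prescribing $\mu=\hat j(\ka)$, collapsing exactly to $\mu$, and lifting. The main obstacle is concentrated here: after this collapse the domain $V_\alpha^{V[G]}$ has size $\om_1$ rather than being countable, so the relevant approximation tree has height $\om_1$, and bare absoluteness of well-foundedness (an $\om$-level phenomenon) no longer produces the branch in the extension. I expect to overcome this by exploiting the countable closure of the collapse—taking conditions to be partial embeddings of countable size, so that the branch-building forcing is itself countably closed and adds no reals, whence the ill-foundedness of the height-$\om_1$ tree is preserved down to $V[G][H]$. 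This use of countable closure is precisely the feature that confines the method to $\om_1$ and $\om_2$ and that one expects to fail above $\om_2$, in accordance with the dichotomy stated in the introduction.
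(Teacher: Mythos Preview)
Your treatment of $(2)\Rightarrow(1)$ and $(1)\Rightarrow(3)$ is correct and essentially matches the paper. For $(1)\Rightarrow(3)$ you take a small detour through a tree-of-finite-approximations argument; the paper observes instead that since $\col(\om,<\mu)$ already collapses $|V_\alpha|$ to $\om$, a $(V,\col(\om,|V_\alpha|))$-generic can be built inside $V[G^+]$ itself, so the virtual embedding and its lift live directly in $V[G^+]$ and no absoluteness step is needed. Either route works.

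The genuine gap is in $(1)\Rightarrow(4)$. You correctly isolate the obstruction---after $\col(\om_1,<\ka)*\col(\om_1,[\ka,\mu))$ the domain $V_\alpha^{V[G]}$ has size $\om_1$ in $V[G^+]$, so the approximation tree has height $\om_1$---but your proposed $\sigma$-closure fix does not close it. The branch witnessed by the lifted $\tilde j$ lives only in a further extension obtained by collapsing $|V_\alpha|$ to $\om$; that forcing is not $\sigma$-closed, so no reflection principle transfers the branch back. And the bare fact that the tree of countable partial embeddings is $\sigma$-closed does not yield a branch in $V[G^+]$: $\sigma$-closure gives upper bounds for countable chains but says nothing about dead ends, so the recursive construction of a branch can stall. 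One can indeed show each individual restriction $\tilde j\restriction X_\gamma$ lies in $V[G^+]$ (via a bijection $\om\to X_\gamma$ in $V[G]$ and its $\tilde j$-image), but the whole $\om_1$-sequence of these restrictions need not, and that is precisely what your argument requires.

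The paper's route for $(1)\Rightarrow(4)$ is substantially different and more elaborate. It first proves that a virtually extendible $\ka$ carries a Laver-type diamond $d:\ka\to V_\ka$, and then builds a revised countable support iteration $\bbP_\ka$ of subcomplete forcings whose active stages are of the form $\col(\alpha,d(\alpha))*\mathrm{Nm}*\col(\om_1,\cdot)$. The Laver function ensures that at stage $\ka$ of the image iteration $j(\bbP_\ka)$ one collapses exactly the target $\theta$. The decisive idea is that Namba forcing at stage $\ka$ makes $\cf(\ka)=\om$ in the further extension $V[G']$ without collapsing $\om_1$; hence $V_\theta^{V[G]}$ is covered by an $\om$-sequence of sets $X_n\in V[G]$ each of size $\le\om_1$, each restriction $j\restriction X_n$ lies in $V[G']$, and one is back to a height-$\om$ tree, for which ordinary absoluteness of ill-foundedness applies. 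The reduction to an $\om$-tree via Namba forcing---not a $\sigma$-closure manoeuvre on an $\om_1$-tree---is the heart of the proof.
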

On the other hand,
the existence of a generically extendible cardinal $>\om_2$
is strong as implying $0^{\#}$.

\begin{thm}\label{thm2}
If there exists a  generically extendible cardinal $>\om_2$,
then $0^{\#}$ exists.
\end{thm}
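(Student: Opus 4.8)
The plan is to show that if $0^\#$ does not exist, then no cardinal $\kappa > \omega_2$ can be generically extendible. So I would work under the assumption $\neg 0^\#$ and derive a contradiction from the generic extendibility of some $\kappa > \omega_2$. The central tool is Jensen's Covering Lemma: when $0^\#$ fails, $L$ computes successors of singular cardinals correctly and, more to the point, every uncountable set of ordinals is covered by a constructible set of the same cardinality. I expect to exploit the covering lemma to transfer combinatorial structure between $V$ and $L$ in a way that the generic embedding cannot respect once $\kappa$ is a genuine cardinal above $\omega_2$.

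The key steps I would carry out are as follows. First, fix $\kappa > \omega_2$ generically extendible and choose a suitably large $\alpha > \kappa$ (large enough that $V_\alpha$ reflects enough about the relevant structure of $L$, in particular containing the $L$-successors of the cardinals involved). Apply generic extendibility to obtain, in some generic extension $V[G]$ via a poset $\bbP$, an elementary embedding $j : V_\alpha \to V_\beta^{V[G]}$ with $\crit(j) = \kappa$ and $\alpha < j(\kappa)$. The crucial observation is that forcing does not add $0^\#$: since $0^\#$ does not exist in $V$ and the existence of $0^\#$ is preserved by set forcing (it is absolute between $V$ and $V[G]$ because $0^\#$, if it existed in $V[G]$, would already exist in $V$ by the homogeneity/absoluteness of the $L$-indiscernibles), we have $\neg 0^\#$ holding in $V[G]$ as well. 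Hence the covering lemma is available in $V[G]$, and $L^{V[G]} = L^V = L$.

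The heart of the argument is then to examine how $j$ acts on the constructible structure. Since $\alpha$ was chosen large, $V_\alpha$ (and $V_\beta^{V[G]}$) correctly contains initial segments of $L$, and by elementarity $j$ restricts to an elementary embedding of the corresponding constructible levels $j \restriction (V_\alpha \cap L) : (V_\alpha \cap L) \to (V_\beta \cap L)$ with critical point $\kappa$. A nontrivial elementary embedding of constructible levels with critical point $\kappa$ is exactly the kind of object whose existence yields $0^\#$ via the standard argument producing a class of Silver indiscernibles — provided the embedding moves a genuine $V$-cardinal and the domain is tall enough. The role of $\kappa > \omega_2$ is that $\kappa$ is then a cardinal of uncountable cofinality that is preserved well enough (or whose $L$-cardinal structure is pinned down by covering) so that $\crit(j) = \kappa$ forces $j$ to be nontrivial on $L$, contradicting $\neg 0^\#$ in $V[G]$.

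The main obstacle I anticipate is bridging the gap between a generic embedding of $V_\alpha$ (a set embedding living in a forcing extension, with target computed in $V[G]$) and a genuine embedding of constructible levels usable to derive $0^\#$ — one must verify that the critical point of the restricted embedding is still $\kappa$ (not pushed up) and that the embedding is genuinely nontrivial on $L$ rather than being the identity on the constructible part. This is precisely where $\kappa > \omega_2$ is used: for $\kappa = \omega_1$ or $\omega_2$ the generic forcing can collapse cardinals and thereby ``trivialize'' the action of $j$ on $L$ (this is why those cases are weak, per Theorem~\ref{thm1}), whereas for $\kappa > \omega_2$ I would argue, using covering, that $\kappa$ remains a point where $j$ must be nontrivial on the constructible hierarchy. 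Establishing that nontriviality cleanly — ruling out that $j \restriction L$ is the identity below $\beta$ — is the delicate part, and I would handle it by a reflection argument choosing $\alpha$ so that $V_\alpha$ sees a closed unbounded set of $L$-cardinals above $\kappa$ that $j$ is forced to move.
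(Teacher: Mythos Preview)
Your outline is on the right track---restrict the generic embedding to $L$, note that $\neg 0^\#$ persists to $V[G]$, and invoke covering---but the central step is missing, and the obstacle you flag is not the real one.

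First, the non-issues. The restriction $j \restriction L_\alpha$ automatically has critical point $\ka$ and is nontrivial on $L$: the ordinal $\ka$ lies in $L_\alpha$ and $j(\ka)>\ka$. There is nothing to check here, and no reflection argument about club-many $L$-cardinals is needed or helpful.

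The real gap is this: a nontrivial set-sized elementary embedding $j:L_\alpha \to L_\beta$ does \emph{not} by itself yield $0^\#$. Indeed, such embeddings exist in $L$ whenever $L$ has a virtually extendible cardinal (this is precisely the content of Theorem~\ref{thm1}), so your proposed argument as written would prove too much. What you need is the following lemma, which is where both covering and the hypothesis $\ka>\om_2$ are actually used: if there is an elementary $j:L_\alpha \to L_\beta$ with $\om_2 \le \crit(j)=\ka$ and $(\ka^+)^L \le \alpha$, then $0^\#$ exists. The proof goes through the derived $L$-ultrafilter $U=\{X \in \p(\ka)\cap L_\alpha \mid \ka \in j(X)\}$; one must show $\ult(L,U)$ is well-founded, for then the ultrapower map $L \to L$ is nontrivial and Kunen's theorem applies. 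If $\ult(L,U)$ were ill-founded, take witnesses $f_n \in L$ ($n<\om$); by covering (available since $\neg 0^\#$) there is $Y \in L$ with $\{f_n\} \subseteq Y$ and $\size{Y}=\om_1$. Here is exactly where $\ka \ge \om_2$ enters: it gives $\size{Y}^L<\ka$, so a Skolem-hull argument in $L$ collapses the $f_n$ into $L_\delta$ for some $\delta<(\ka^+)^L \le \alpha$, placing them in the domain of $j$ and producing an infinite descending $\in$-chain $j(f_{n+1})(\ka)\in j(f_n)(\ka)$.

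To finish, one also needs $\om_2^{V[G]} \le \ka$ so that the lemma applies in $V[G]$. This follows from elementarity: since $\crit(j)=\ka>\om_2^V$, the target $V_\beta^{V[G]}$ (a rank-initial segment of $V[G]$) believes $\om_2^V$ is $\om_2$, hence $\om_2^{V[G]}=\om_2^V \le \ka$. Your proposal does not address this point either.
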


The last half of this paper is devoted to the study of various topics which are relevant to generically extendible cardinals.
In Section 5, we consider the indestructibility of generically extendible cardinals by forcing.
Laver \cite{L} proved that, after some preparation forcing,
the supercompactness of $\ka$ is indestructible by $<\ka$-directed closed forcing.
It is also known that a preparation forcing is necessary;
After adding one Cohen real, any $<\ka$-closed forcing
which adds a new subset of $\ka$ must destroy the supercompactness of $\ka$ (Hamkins \cite{H2}).
Unlike a supercompact cardinal,
an extendible cardinal is never indestructible:
Every non-trivial $<\ka$-closed forcing must destroy the extendibility of $\ka$
(Bagaria-Hamkins-Tsaprounis-Usuba \cite{BHTU}).
For our generically extendible cardinal,
we prove the following indestructibility property without any preparation forcing.

\begin{thm}
Let $\ka$ be a generically extendible cardinal.
\begin{enumerate}
\item Every $\ka$-c.c. forcing preserves the generic extendibility of $\ka$.
\item If $\ka=\om_1$, every proper forcing  preserves the generic extendibility of $\ka$.
\end{enumerate}
\end{thm}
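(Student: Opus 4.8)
The plan is to use the standard lifting technique, reducing the generic extendibility of $\ka$ in the extension to a lift of a ground-model witnessing embedding. Let $\bbQ$ be the forcing in question and $G$ a $\bbQ$-generic filter over $V$; I argue in $V[G]$. Since a witness for a larger $\alpha$ restricts to one for every smaller $\alpha$, it suffices to treat a cofinal class of $\alpha$, so I may assume $\alpha$ is a sufficiently closed limit with $\bbQ\in V_\alpha$ and $V_\alpha^{V[G]}=V_\alpha^{V}[G]$. By generic extendibility of $\ka$ in $V$, fix a poset $\bbP\in V$ forcing an elementary embedding $j\colon V_\alpha^{V}\to V_\beta^{V[H]}$ with $\crit(j)=\ka$, $\alpha<j(\ka)$ and $\beta>\alpha$, where $H$ is $\bbP$-generic. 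Forcing with $\bbP$ over $V[G]$ produces such an $H$ (generic over $V$ as well), hence $j$, inside $V[G][H]$. The goal is to find, in a further extension, a $j(\bbQ)$-generic $G'$ over $V_\beta^{V[H]}$ with $j''G\subseteq G'$; the lifted map $\hat j(\dot x_G)=j(\dot x)_{G'}$ is then elementary, and since $G$ is recoverable from $G'$ through $j$ one checks $V_\beta^{V[H]}[G']=V_\beta^{V[G][H][G']}$, so that $\hat j\colon V_\alpha^{V[G]}\to V_\beta^{V[G][H][G']}$ witnesses generic extendibility of $\ka$ in $V[G]$ via the poset $\bbP\ast\dot{\mathbb{S}}$, where $\dot{\mathbb{S}}$ names the poset adding $G'$.

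For (1), suppose $\bbQ$ is $\ka$-c.c. Then $\bbQ$ preserves cardinals $\ge\ka$, so $\ka$ remains a cardinal in $V[G]$. Since $\crit(j)=\ka$, every maximal antichain $A\subseteq\bbQ$ has $\size{A}<\ka$, whence $j(A)=j''A$ is a maximal antichain of $j(\bbQ)$; thus $j\restriction\bbQ\colon\bbQ\to j(\bbQ)$ is a complete embedding. Therefore I take $\dot{\mathbb{S}}$ to name the quotient $j(\bbQ)/j''G$ and let $G'$ be $\mathbb{S}$-generic over $V[G][H]$; then $G'$ is $j(\bbQ)$-generic over $V_\beta^{V[H]}$ with $j''G\subseteq G'$. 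Completeness also yields, for $x\in V_\ka^{V[G]}$ represented by a nice name built from antichains of size $<\ka$, that $\hat j(x)=x$, so $\crit(\hat j)=\ka$ and $\alpha<\hat j(\ka)=j(\ka)$. This finishes (1).

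For (2), let $\ka=\om_1$ and $\bbQ$ proper; properness preserves $\om_1$, so $\om_1^{V[G]}=\om_1$. Now $j\restriction\bbQ$ need not be complete, so there is no quotient forcing, and the correct substitute is a master condition below $j''G$. Here I use that $\crit(j)=\om_1$ forces $j$ to fix $H_{\om_1}$ pointwise, and I take the witness in its collapsing normal form, so that in $V[H]$ the target $N:=V_\beta^{V[H]}$ is countable; then $M:=j''V_\alpha$ is a countable elementary submodel of $N$ with $M\cap\om_1^N=\crit(j)=\om_1$. By elementarity $N\models$``$j(\bbQ)$ is proper'', and $F:=j''G$ is an $M$-generic filter on $j(\bbQ)\cap M$. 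The plan is to produce a master condition $q^{*}\in j(\bbQ)$ below $F$ and let $G'$ be $j(\bbQ)$-generic over $N$ below $q^{*}$, after which the lift proceeds exactly as in (1), giving $\hat j\colon V_\alpha^{V[G]}\to V_\beta^{V[G][H][G']}$ with $\crit(\hat j)=\om_1$ and $\alpha<\hat j(\om_1)$.

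The main obstacle is precisely the existence of the master condition below the prescribed filter $F=j''G$: properness of $j(\bbQ)$ only delivers $(M,j(\bbQ))$-generic \emph{conditions}, whose generic trace on $M$ need not be the given $F$, and for a general proper $\bbQ$ a lower bound of $F$ exists only when $M$ is a ``good'' countable model for $j(\bbQ)$ -- the phenomenon already visible for club-shooting, where a lower bound of $F$ exists iff $\crit(j)\in j(S)$. The key step is therefore to choose the witnessing embedding $j$ \emph{generically enough}, so that $M=j''V_\alpha$ lands in $j(C)$ for the club $C\in V$ of countable models admitting generic conditions for $\bbQ$. This is exactly where properness of $\bbQ$ is used: properness makes the good models contain a club, and one steers $j$ into $j(C)$ by folding a corresponding genericity demand into $\bbP$. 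Once $j$ is so chosen, $F$ has a lower bound, the lift goes through, and the generic extendibility of $\om_1$ is preserved.
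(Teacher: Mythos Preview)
Your argument for part (1) is correct. You take the ``forward'' route---fix the $\bbQ$-generic $G$ first, then lift $j$ via the quotient $j(\bbQ)/j''G$---whereas the paper takes the ``backward'' route: it forces the witness first, then $j(\bbQ)$ to obtain $G'$, and \emph{defines} $G=\{q\in\bbQ:j(q)\in G'\}$ by pullback. In the $\ka$-c.c.\ case both approaches succeed, precisely because $j\restriction\bbQ$ is a complete embedding; your version has the mild advantage of applying directly to an arbitrary prescribed $G$.

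For part (2), however, there is a genuine gap. You correctly isolate the obstacle---once $G$ is fixed in advance you need a condition in $j(\bbQ)$ below the \emph{specific} filter $j''G$, and properness alone does not provide one---but your proposed repair does not work. If $C$ is the club of countable $M\prec H_\lambda$ that ``admit generic conditions for $\bbQ$'', then by the very definition of properness $C$ consists of \emph{all} countable $M\prec H_\lambda$; hence $j''H_\lambda\in j(C)$ holds automatically and buys nothing. Membership in $j(C)$ yields only that \emph{some} $(j''H_\lambda,j(\bbQ))$-generic condition exists, not one below your prescribed $j''G$; these are different demands, and for many proper forcings no lower bound of a given $M$-generic filter exists at all. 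Your club-shooting illustration in fact underlines the problem rather than solving it: that forcing is not proper when $S$ is co-stationary, so it gives no guidance on how to ``steer $j$'' within the proper class. (Also, the assertion that ``$N:=V_\beta^{V[H]}$ is countable'' is simply false---a rank initial segment of $V[H]$ is never countable in $V[H]$; what you need, and what is true, is that $M=j''V_\alpha$ is countable \emph{in} $N$.)

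The paper's resolution is to abandon the forward approach in the proper case. It does not fix $G$ beforehand: it forces the witness first, obtaining $H$ and $j$; observes that $j''H_\lambda^V$ is a countable elementary substructure of $H_{j(\lambda)}^{V[H]}$; uses properness of $j(\bbQ)$ to choose an $(j''H_\lambda^V,j(\bbQ))$-generic condition $p^*$ (below any given $j(q_0)$); forces $j(\bbQ)$ below $p^*$ to obtain $G^*$; and then \emph{defines} $G=\{q\in\bbQ:j(q)\in G^*\}$. Genericity of $G$ over $V$ follows because $j''D=j(D)\cap j''H_\lambda^V$ is predense below $p^*$ for every dense $D\in H_\lambda^V$, and the lift $V_\alpha^{V[G]}\to V_\beta^{V[H][G^*]}$ goes through. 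Since $p^*$ may be taken below $j(q_0)$ for any $q_0\in\bbQ$, a density argument then gives the conclusion for every $\bbQ$-generic. The moral: in the proper case the $(M,j(\bbQ))$-generic condition replaces the master condition, but only if you let $G$ be produced by the argument rather than prescribed in advance.
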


In Section 6, we consider  generic absoluteness of the reals.
For a class $\Gamma$ of posets, 
let us say that \emph{${\bf \Sigma}_n^1$-absoluteness for $\Gamma$ holds}
if  for every real $r$ and a $\Sigma_n^1$ or $\Pi_n^1$-formula $\varphi(x)$ of the reals,
if $\varphi(r)$ holds in $V$ then so does in $V^{\bbP}$ for every $\bbP \in \Gamma$.
We prove that if $\om_1$ is generically extendible, then
${\bf \Sigma}^1_3$-absoluteness holds, 
and ${\bf\Sigma}^1_4$-absoluteness for proper forcing holds too.
However full ${\bf\Sigma}^1_4$-absoluteness could not.
\begin{thm}
Suppose $\om_1$ is generically extendible.
\begin{enumerate}
\item  ${\bf \Sigma}^1_3$-absoluteness  holds.
\item ${\bf \Sigma}^1_4$-absoluteness for proper forcing holds.
\end{enumerate}
\end{thm}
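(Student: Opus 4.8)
Throughout, I use that the reals of $V_\gamma$ and of $V$ agree for every ordinal $\gamma>\om_1$, so that a projective formula with real parameters from $V$ holds in $V_\gamma$ exactly when it holds in $V$; I shall pass between $V$ and such $V_\gamma$'s without comment. The first reduction is that in each part only the \emph{downward} direction needs proof. Indeed, upward absoluteness of a $\Sigma^1_n$ statement $\exists X\,\theta(X,r)$ is automatic: a witness $X\in V$ survives to $V[G]$, and its $\Pi^1_{n-1}$ matrix $\theta(X,r)$ is upward absolute---by Shoenfield when $n=3$, and by part~(1) of the present theorem when $n=4$. Thus it suffices to show, for $r\in\bbR^V$ and $\theta\in\Pi^1_{n-1}$, that if $\exists X\,\theta(X,r)$ holds in some forcing extension (proper, in part~(2)) then it already holds in $V$; i.e.\ that $\Sigma^1_n$-witnesses reflect down to $V$.

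The engine for this reflection is the following use of generic extendibility. Fix $\bbP$, a real $r\in\bbR^V$, and a condition $p\in\bbP$ with $p\Vdash\exists X\,\theta(X,\check r)$; choose a cardinal $\alpha>\om_1$ with $\bbP\in V_\alpha$ and $2^{|\bbP|}<\alpha$. Applying the generic extendibility of $\om_1$ at $\alpha$, move to an extension $V[H]$ equipped with an elementary embedding $j:V_\alpha\to V_\beta^{V[H]}$ with $\crit(j)=\om_1$ and $\alpha<j(\om_1)$. Since $V_\alpha\models$ ``$\om_1$ is the least uncountable cardinal'', elementarity gives that in $V_\beta^{V[H]}$ every ordinal below $j(\om_1)$ is countable; as $\alpha<j(\om_1)$, the ordinal $\alpha$---and hence the set $\mathcal{P}(\bbP)^V$, which has size $2^{|\bbP|}<\alpha$---is countable in $V[H]$. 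By Rasiowa--Sikorski applied in $V[H]$ I may therefore build a filter $g\ni p$ meeting every dense subset of $\bbP$ that lies in $V$. Then $g$ is genuinely $\bbP$-generic over $V$, $V[g]\subseteq V[H]$, and since $p\in g$ we have $V[g]\models\exists X\,\theta(X,r)$.

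Now fix a real $X_1\in V[g]$ with $V[g]\models\theta(X_1,r)$. In part~(1), $\theta\in\Pi^1_2$: by the intermediate model theorem $V[H]$ is a forcing extension of $V[g]$, so Shoenfield absoluteness upgrades $\theta(X_1,r)$ to $V[H]$. In part~(2), $\theta\in\Pi^1_3$ and $\bbP$ is proper; here Shoenfield is replaced by part~(1) applied \emph{inside} $V[g]$. This is legitimate because $\bbP$ proper preserves $\om_1$, so $\om_1^{V[g]}=\om_1^V$, and by the indestructibility theorem of Section~5 the cardinal $\om_1$ remains generically extendible in $V[g]$; hence $\Sigma^1_3$-absoluteness holds in $V[g]$ and carries the $\Pi^1_3$ statement $\theta(X_1,r)$ up to the further extension $V[H]$. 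In either case $V[H]\models\theta(X_1,r)$, so $V_\beta^{V[H]}\models\exists X\,\theta(X,r)$; elementarity of $j$ (with $j(r)=r$) gives $V_\alpha\models\exists X\,\theta(X,r)$, and passing back from $V_\alpha$ to $V$ yields $V\models\exists X\,\theta(X,r)$, as required.

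The main obstacle is the middle step---producing a $\bbP$-generic $g$ over $V$ that lives inside the embedding extension $V[H]$. Everything hinges on arranging two ingredients simultaneously: the clause $\alpha<j(\om_1)$ forces $H$ to collapse $\alpha$, while the choice $2^{|\bbP|}<\alpha$ ensures that once $\alpha$ is countable in $V[H]$ so is the family of all $V$-dense subsets of $\bbP$, making Rasiowa--Sikorski applicable. For part~(2) the additional delicate point is checking the hypotheses of the Section~5 indestructibility result---properness of $\bbP$ is exactly what preserves both $\om_1$ and its generic extendibility into $V[g]$, which is what allows part~(1) to be bootstrapped there.
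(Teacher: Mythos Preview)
Your argument is correct and follows essentially the same strategy as the paper: reduce to the downward direction, absorb the given $\bbP$-generic into an extendibility-witnessing extension $V[H]$ (using that $\p(\bbP)^V$ becomes countable there), push the $\Pi^1_{n-1}$ matrix up from $V[g]$ to $V[H]$ via Shoenfield (for $n=3$) or via part~(1) applied in $V[g]$ after invoking the proper-indestructibility result (for $n=4$), and pull the $\Sigma^1_n$ statement back through $j$. The only differences are cosmetic---you name Rasiowa--Sikorski and the intermediate-model theorem explicitly where the paper leaves these implicit, and the paper isolates part~(2) as a corollary of a slightly more general statement (for any class $\Gamma$ of $\om_1$-preserving posets that preserve generic extendibility of $\om_1$).
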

\begin{thm}
It is consistent that
$\om_1$ is generically extendible,
and ${\bf \Sigma}^1_4$-absoluteness for $\om_1$-preserving forcing fails.
\end{thm}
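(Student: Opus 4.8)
The plan is to build a single model, rather than to argue from the generic extendibility of $\om_1$ in general, since the previous theorem shows that no proper forcing can witness the failure. First I pin down where the failure must come from. Because ${\bf \Sigma}^1_3$-absoluteness holds for \emph{all} forcing, every $\Pi^1_3$ formula is upward absolute; hence so is every $\Sigma^1_4$ formula $\exists y\,\psi(x,y)$ with $\psi$ of class $\Pi^1_3$, since a ground-model witness $y$ stays a witness in any extension. Thus the only way ${\bf \Sigma}^1_4$-absoluteness (in the paper's two-sided sense, which also quantifies over $\Pi^1_4$ formulas) can break is on the ${\bf \Pi}^1_4$ side: a true $\Pi^1_4$ sentence must be made false, equivalently a false $\Sigma^1_4$ sentence must be made true by adjoining a new witnessing real. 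The natural candidate is the lightface $\Pi^1_4$ sentence asserting that $\om_1$ is inaccessible to reals, $\forall a\,(\om_1^{L[a]}<\om_1)$, whose negation $\exists a\,(\om_1^{L[a]}=\om_1)$ is $\Sigma^1_4$ because $\text{``}\om_1^{L[a]}=\om_1\text{''}$ is $\Pi^1_3(a)$.

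Next I set up the ground model. By Theorem~\ref{thm1} I may start with a model in which $\om_1$ is generically extendible. If some real $a$ satisfied $\om_1^{L[a]}=\om_1$, then $\om_1$ would be definable from $a$ as the ordinal $\om_1^{L[a]}$; since this computation is absolute, $\om_1$ would be fixed by any elementary embedding with critical point $\om_1$, contradicting that a generic extendibility embedding $j$ has $\crit(j)=\om_1$ (so $j(\om_1)>\om_1$). Hence $\om_1$ is inaccessible to reals and the $\Pi^1_4$ sentence above is true. I then force $\CH$, if it does not already hold, by the countably closed collapse $\col(\om_1,2^{\om})$: this adds no reals, so it preserves inaccessibility to reals, and since it is proper it preserves the generic extendibility of $\om_1$ by the indestructibility theorem. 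Call the resulting model $V$; there $\om_1$ is generically extendible, $\CH$ holds, and $\om_1$ is inaccessible to reals.

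For the witnessing forcing I use reshaping followed by almost disjoint coding. Reshaping adds a set $A\subseteq\om_1$ with $L[A\cap\xi]\models |\xi|\le\om$ for every $\xi<\om_1$; by the standard reshaping lemma this forcing is $\om_1$-preserving under $\CH$. Almost disjoint coding, which is c.c.c., then converts $A$ into a real $a$ with $A\in L[a]$ and $\om_1^{L[a]}=\om_1$. The composite $\bbQ$ is $\om_1$-preserving and adds a real $a$ with $\om_1^{L[a]}=\om_1$, so in $V^{\bbQ}$ the sentence $\exists a\,(\om_1^{L[a]}=\om_1)$ becomes true and the $\Pi^1_4$ sentence fails. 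Note that $\bbQ$ is automatically non-proper: were it proper, the previous theorem would force the $\Sigma^1_4$ sentence to stay false, contrary to what $\bbQ$ achieves. Thus ${\bf \Sigma}^1_4$-absoluteness for $\om_1$-preserving forcing fails in $V$, while $\om_1$ remains generically extendible in $V$.

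The main obstacle is the analysis of $\bbQ$: one must verify that reshaping is genuinely $\om_1$-preserving in $V$, which is where $\CH$ is used and which is the delicate point, since reshaping is not countably closed and its distributivity must be established by a fusion/condensation argument; and one must check that the coded real $a$ indeed satisfies $\om_1^{L[a]}=\om_1$. A secondary bookkeeping point is to confirm that $\text{``}\om_1^{L[a]}=\om_1\text{''}$ is exactly $\Pi^1_3(a)$, so that the target sentence sits precisely at the ${\bf \Sigma}^1_4/{\bf \Pi}^1_4$ level; any lower and it would already be decided by the ${\bf \Sigma}^1_3$-absoluteness that holds for all forcing. Finally, it is worth emphasizing that the generic extendibility of $\om_1$ is needed only in the ground model $V$: the forcing $\bbQ$ may destroy it, because the absoluteness in question compares $V$ with $V^{\bbQ}$.
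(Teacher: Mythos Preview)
Your argument is correct and takes a genuinely different route from the paper's. The paper's proof is two lines: build the model of Proposition~\ref{2.5} over $L$, so that $\om_1$ is generically extendible while $0^{\#}$ does not exist, and then invoke Fact~\ref{BF} (Bagaria--Friedman) to conclude that ${\bf\Sigma}^1_4$-absoluteness for $\om_1$-preserving forcing must fail there. You instead unpack the relevant piece of Bagaria--Friedman by hand: you show directly that generic extendibility of $\om_1$ forces $\om_1$ to be inaccessible to reals (a clean observation not made explicit in the paper), isolate the $\Pi^1_4$ sentence $\forall a\,(\om_1^{L[a]}<\om_1)$ as the one to violate, and exhibit the violating $\om_1$-preserving forcing explicitly as reshaping followed by almost-disjoint coding. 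Your route is more self-contained and produces the concrete witnessing forcing (and, as you note, shows that this forcing cannot be proper), at the price of carrying the reshaping argument; the paper's route is shorter but treats Bagaria--Friedman as a black box. One caution: your claim that reshaping is $\om_1$-preserving ``under $\CH$'' is stated a bit loosely---the standard distributivity proof needs the countable elementary hull to be recoverable inside $L$ of the generic condition, which is unproblematic in the specific Levy-collapse-over-$L$ models produced by Proposition~\ref{2.5} (where $H_{\om_1}$ is constructible from a single subset of $\om_1$) but requires more care in an arbitrary $\CH$ model. Anchoring your construction in that specific model, rather than an arbitrary one, removes any doubt and costs you nothing.
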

Using this theorem, we construct a model in which
$\om_1$ is generically extendible and there is an $\om_1$-preserving forcing
which destroys the generic extendibility of $\om_1$.

In Section 7, we study  Boolean valued second order logic.
We show that the Hanf number of Boolean valued second order logic
is strictly smaller than the least generically extendible,
hence the number could be small as $\om$ or $\om_1$.

Here we present some notations which may not be general.
For an ordinal $\alpha$, let $V_\alpha$ be the set of all sets with rank $<\alpha$.
For a transitive model $M$ of (a large fragment of) $\ZFC$,
$V_\alpha^M$ is the relativization of $V_\alpha$ to $M$,
that is, the set of all sets in $M$ with rank $<\alpha$.

Let $X$ be a set.
For a poset $\bbP$ and a $(V, \bbP)$-generic $ G$,
let $X[G]=\{\sigma_G \mid \sigma \in X$ is a $\bbP$-name$\}$,
where $\sigma_G$ is the interpretation of $\sigma$ by $G$.
It is known that if $\alpha$ is a limit ordinal with $\bbP \in V_\alpha$,
then $V_\alpha[G]=V_\alpha^{V[G]}$.

For a poset $\bbP$, if we do not need to specify a $(V, \bbP)$-generic filter,
we denote a generic extension via $\bbP$ by $V^\bbP$.

For a regular cardinal $\la$ and a set $X$,
let $\col(\la, X)$ be the standard $\la$-closed poset which adds a surjection from $\la$ to $X$.
When $\la<\ka$ are regular cardinals with $\alpha^{<\la}<\ka$ for every $\alpha<\ka$,
let $\col(\la,<\ka)$ denote the standard $\la$-closed $\ka$-c.c.~poset which 
forces $\ka=\la^+$.
 
\section{Generic extendibility of $\om_1$}

In this section, we establish the consistency of the generic extendibility of $\om_1$.
First we present some known facts about virtually extendible cardinals.
%\begin{fact}[\cite{BGS}, \cite{GS}]
%Suppose $\ka$ is virtually extendible.
%\begin{enumerate}
%\item $\ka$ is inaccessible, and 
%there are proper class many 
%inaccessible cardinals.
%\item $\ka$ is virtually extendible in $L$.
%\end{enumerate}
%\end{fact}

\begin{fact}[Theorem 3.8 in \cite{BGS}, Theorems 4.6 and 4.7 in \cite{GS}]
Suppose $\ka$ is virtually extendible.
\begin{enumerate}
\item $\ka$ is inaccessible.
\item $\ka$ is virtually extendible in $L$.
\end{enumerate}
\end{fact}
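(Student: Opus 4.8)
The plan is to run everything off a single virtual embedding $j:V_\alpha\to V_\beta$ with $\crit(j)=\ka$ and $\alpha<j(\ka)$ living in some $V[G]$, exploiting repeatedly that for a \emph{fixed} transitive set $W\in V$ and parameters in $V$ the satisfaction relation $W\models\varphi$ is absolute between $V$ and $V[G]$ (the set $W$, the relation $\in$, and the parameters are unchanged by forcing). Thus any internal consequence of elementarity derived in $V[G]$ can be shuttled back to $V$. To see $\ka$ is \textbf{regular}, assume $\cf(\ka)=\delta<\ka$ with cofinal $f:\delta\to\ka$; choosing $\alpha$ large we get $f\in V_\alpha$, and since $\size{f}<\crit(j)$ we have $j(f)=f$, $j(\delta)=\delta$. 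The statement ``$V_\alpha\models f$ is cofinal in $\ka$'' passes to $V[G]$, elementarity gives ``$V_\beta\models f$ is cofinal in $j(\ka)$'', and reading this back in $V$ contradicts $\range(f)\subseteq\ka<j(\ka)$. For \textbf{strong limitness}, suppose $\lambda<\ka$ has $2^\lambda\ge\ka$; pick $\alpha$ so large that the cardinal $\mu=\size{2^\lambda}$ satisfies $\mu<\alpha$ and some bijection $b\colon\p(\lambda)\to\mu$ lies in $V_\alpha$. Then $V_\alpha\models\size{\p(\lambda)}=\mu$ with $\mu\ge\ka$ a cardinal, so by elementarity $V_\beta\models\size{\p(\lambda)}=j(\mu)$ with $j(\mu)\ge j(\ka)>\alpha>\mu$, while $b\in V_\alpha\subseteq V_\beta$ already gives $V_\beta\models\size{\p(\lambda)}\le\mu$; composing the two bijections inside $V_\beta$ produces a bijection of $\mu$ with $j(\mu)$, contradicting $V_\beta\models j(\mu)$ is a cardinal. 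Note it is here that the hypothesis $\alpha<j(\ka)$ is essential: it forces $\mu<\alpha<j(\mu)$ and places $b$ into $V_\beta$.

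For the \textbf{proper class of inaccessibles}, fix any ordinal $\eta$ and take $\alpha>\max(\eta,\ka)$ with $V_\alpha\models\text{``}\ka\text{ is inaccessible''}$ (available by the previous step). The embedding $j:V_\alpha\to V_\beta$ yields, via the same absoluteness shuttle, that $V_\beta\models\text{``}j(\ka)\text{ is inaccessible''}$ holds in $V$. Since $j(\ka)<\beta$, every potential witness to a failure of inaccessibility of $j(\ka)$ (a short cofinal map or a large power set) has rank below $\beta$ and hence lies in $V_\beta$, so $j(\ka)$ is genuinely inaccessible in $V$ with $j(\ka)>\alpha>\eta$.

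For the second assertion I would restrict to $L$. As $L$ is uniformly $\Delta_1$-definable, $j$ carries $V_\alpha^L$ into $V_\beta^L$, and $j\restriction V_\alpha^L\colon V_\alpha^L\to V_\beta^L$ is an elementary embedding of structures lying in $L$, still with critical point $\ka$ and $\alpha<j(\ka)$, but a priori only in $V[G]$. The task is to relocate this virtual embedding into a forcing extension of $L$, and for this I would invoke the standard absoluteness of virtual embeddings: after collapsing $V_\alpha^L$ to be countable, the statement ``there is an elementary embedding $V_\alpha^L\to V_\beta^L$ with critical point $\ka$ and $\alpha$ below the image of $\ka$'' is equivalent to ill-foundedness of a tree $T$ of finite approximations built from $V_\alpha^L,V_\beta^L\in L$. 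Taking $g$ generic for $\col(\om,V_\alpha^L)$ (the same poset in $L$ and in $V$) over $V[G]$, the restricted embedding furnishes a branch, so $T$ is ill-founded in $V[G][g]$; since ill-foundedness of a set tree is absolute between the transitive $\ZF$-models $L[g]\subseteq V[G][g]$, both containing $T$, the tree is ill-founded in $L[g]$, producing the desired embedding there. As $\alpha$ was arbitrary, $\ka$ is virtually extendible in $L$.

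I expect the \textbf{main obstacle} to be this second assertion: arranging the tree $T$ so that its branches genuinely code elementary embeddings respecting the critical-point and $\alpha<j(\ka)$ constraints, and verifying that ill-foundedness of $T$ descends to $L[g]$ (which rests on the canonicity of rank functions, i.e.\ that well-foundedness of a set relation is absolute between transitive models of $\ZF$). The strong-limit step of the first assertion is the other delicate point, since it is the one place genuinely requiring $\alpha<j(\ka)$ rather than merely $\crit(j)=\ka$.
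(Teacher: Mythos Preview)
The paper does not prove this Fact; it is quoted from \cite{BGS} and \cite{GS} without argument, so there is no in-paper proof to compare against directly. Your argument is correct in both parts. For part~(2), your tree-of-finite-partial-embeddings argument is exactly the mechanism the paper isolates and proves as its very next Fact (the absoluteness lemma: an embedding $X\to Y$ in any outer model can be reproduced in $M^{\col(\om,\size{X}^M)}$ with prescribed critical point and finitely many values), and then applies in the following Proposition to show the stronger statement that \emph{generic} extendibility of $\ka$ already implies virtual extendibility of $\ka$ in $L$. So your approach to (2) coincides with the paper's toolkit. Your treatment of part~(1) is the standard reflection argument and is fine; the paper offers nothing to compare it with.
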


The following lemma is immediate from the definition of virtually extendible cardinals.
\begin{lemma}
If there is a virtually extendible cardinal,
then there are proper class many inaccessible cardinals.
\end{lemma}
\begin{proof}
Fix a large $\alpha>\ka$.
In some generic extension,
there is $\beta>\alpha$ and an elementary embedding $j:V_\alpha \to V_\beta$
with critical point $\ka$ and $\alpha<j(\ka)$.
Since $\ka$ is inaccessible in $V$, we have that
$j(\ka)$ is inaccessible in $V_\beta$, and so is in $V$.
\end{proof}

An \emph{elementary embedding} means an elementary embedding
between $\in$-structures with the language of set theory.

\begin{fact}[Proposition 2.7 in \cite{BGS}, 
Corollary 3.2 in \cite{GS}]\label{GS thm}
Let $M$ be a transitive (set or class) model of a sufficiently large fragment of $\ZFC$.
Let $X, Y \in M$ be transitive sets, and $a \subseteq X$ a finite set.
Suppose there is an elementary embedding $j:X \to Y$ ($j$ may not be in $M$).
Then the forcing $\col(\om, \size{X}^M)$ over $M$ adds an elementary 
embedding $i:X \to Y$ such that $\mathrm{crit}(i)=\mathrm{crit}(j)$ (if $j$ has the critical point) and
$i(x)=j(x)$ for every $x \in a$.
\end{fact}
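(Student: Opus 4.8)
The plan is to prove this by the standard tree-of-finite-approximations argument, whose crux is the downward absoluteness of ill-foundedness certified by a rank function. Write $\bbP=\col(\om,\size{X}^M)$ and let $\delta=\crit(j)$ when it exists. First I would fix in $M$ the finite seed $p_0$ with domain $a\cup\{\delta\}$ and $p_0(x)=j(x)$; note that although $j\notin M$, the finitely many values $j(x)$ for $x\in a$ together with $j(\delta)$ are elements of $Y\in M$, so $p_0\in M$. I would then define in $M$ the tree $T$ whose nodes are finite partial functions $p\supseteq p_0$ from $X$ to $Y$ that are partially elementary (for every formula $\varphi$ and every tuple $\bar s$ from $\dom(p)$, $X\models\varphi(\bar s)$ iff $Y\models\varphi(p(\bar s))$) and satisfy $p(\xi)=\xi$ whenever $\xi\in\dom(p)\cap\delta$, ordered by extension. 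Since $X,Y$ are \emph{sets} in $M$ and $M$ models a large enough fragment of $\ZFC$ to carry the satisfaction relation for set-sized structures, $T\in M$.

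After forcing with $\bbP$ the set $X$ becomes countable, so I would fix an enumeration $\seq{x_n\mid n<\om}$ of $X$ in the extension and pass to the subtree $T^*$ of those $p\in T$ whose domain contains $\{x_0,\dots,x_{n-1}\}$ at level $n$. An infinite branch of $T^*$ unions to a function with domain exactly $X$, hence to a \emph{total} elementary embedding $i:X\to Y$ extending $p_0$; by the constraint $i\restriction\delta=\mathrm{id}$ while $i(\delta)=j(\delta)>\delta$, so $\crit(i)=\delta=\crit(j)$, and $i(x)=j(x)$ for $x\in a$. Thus it suffices to produce an infinite branch of $T^*$ in the extension. To this end I would take $G$ generic over $V$ (hence over $M\subseteq V$) for $\bbP$, so that $M[G]\subseteq V[G]$ with $T^*\in M[G]$ and $j\in V\subseteq V[G]$. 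In $V[G]$ the restrictions $j\restriction(\{x_0,\dots,x_{n-1}\}\cup\dom(p_0))$ form an infinite branch of $T^*$ (each is partially elementary and meets the two constraints because $j$ is elementary with critical point $\delta$), so $T^*$ is ill-founded in $V[G]$. The key point is that this pushes down into $M[G]$: were $T^*$ well-founded in $M[G]$, there would be a rank function $\rho:T^*\to\mathrm{Ord}$ in $M[G]$, and since a rank function into the ordinals certifies well-foundedness absolutely, $\rho$ would forbid the $V[G]$-branch, a contradiction. Hence $T^*$ is ill-founded already in $M[G]$, yielding the desired $i\in M[G]$.

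I expect the main obstacle to be two soft but essential points rather than any computation. First, the tree $T^*$ has \emph{uncountable} branching when $Y$ is uncountable, so one cannot invoke $\Sigma^1_1$-absoluteness for trees on $\om$; the rank-function certificate is exactly what salvages downward absoluteness of ill-foundedness for trees on an arbitrary set, and I would stress that this argument is insensitive to the size of the branching. Second, the argument as given produces $i$ for a $V$-generic $G$, whereas the statement concerns an arbitrary $M$-generic filter; I would close this gap by the weak homogeneity of the Levy collapse $\col(\om,\size{X}^M)$, which guarantees that the sentence asserting the existence of such an $i$ (with parameters $X,Y,p_0,\delta\in M$) holds in $M[G]$ for \emph{all} $M$-generic $G$ once it holds for a single one.
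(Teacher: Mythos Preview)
Your proposal is correct and follows essentially the same approach as the paper: build a tree of finite partial elementary embeddings (with the seed data $\crit(j)$ and $j\restriction a$ as parameters), observe that $j$ furnishes a branch in $V[G]$, and invoke absoluteness of ill-foundedness to pull the branch down to $M[G]$. The only cosmetic differences are that the paper defines the tree directly in $M[G]$ with domains equal to initial segments of the enumeration (rather than first defining $T\in M$ and restricting), and merely requires $i(\delta)>\delta$ rather than pinning $i(\delta)=j(\delta)$; your extra remarks on the rank-function certificate and on homogeneity of the collapse are correct elaborations that the paper leaves implicit.
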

For the reader's convenience,
we present a proof of this fact.
\begin{proof}
Take a $(V, \col(\om, \size{X}^M))$-generic $G$,
and fix $\seq{x_n \mid n<\om} \in M[G]$ an enumeration of $X$.
Let $T$ be the set of all finite partial elementary embeddings $i$ from $X$ to $Y$ such that:
\begin{enumerate}
\item $\dom(i)=\{x_k \mid k<n\}$ for some $n<\om$.
\item The critical point of $i$ is the same to $j$, that is,
$i(\alpha)=\alpha$ for $\alpha \in \dom(i) \cap \mathrm{crit}(j)$, and if $\mathrm{crit}(j) \in \dom(i)$ then $i(\mathrm{crit}(j))>\mathrm{crit}(j)$.
\item $i(x)=j(x)$ for every $x \in \dom(i) \cap a$.
\end{enumerate}
The set $T$ with the inclusion forms a tree of height $\om$ and $T \in M[G]$.

The set $\{j \restriction \{x_k \mid k<n\} \mid n<\om\}$ is a cofinal branch of $T$,
hence $T$ is ill-founded in $V[G]$.
Since the ill-foundedness of a tree is absolute between $V[G]$ and $M[G]$,
$T$ is ill-founded in $M[G]$.
If $B \in M[G]$ is a cofinal branch of $T$,
then $\bigcup B$ is a required elementary embedding from $X$ to $Y$.
\end{proof}

As a corollary of this fact, we have:
\begin{lemma}%[\cite{BGS}, \cite{GS}]\label{collapsing}
Suppose $\ka$ is virtually extendible.
Then for every $\alpha>\ka$,
$\col(\om, \size{V_\alpha})$ forces that
``there is $\beta$ and an elementary embedding $j:V_\alpha \to V_\beta$
with critical point $\ka$ and $\alpha<j(\ka)$''.
\end{lemma}
These facts yield the following.
\begin{prop}\label{2.2}
If $\ka$ is generically extendible 
then $\ka$ is virtually extendible in $L$.
\end{prop}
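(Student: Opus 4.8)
The plan is to deduce the virtual extendibility of $\ka$ in $L$ from Fact~\ref{GS thm} applied with $M=L$. Recall that for this it suffices to produce, for cofinally many $\alpha$, an elementary embedding $V_\alpha^L \to V_\beta^L$ with $\crit=\ka$ and $\alpha<j(\ka)$ that exists in \emph{some} outer model of $L$: once such an embedding exists, Fact~\ref{GS thm} (with $X=V_\alpha^L$, $Y=V_\beta^L$ and $a=\{\ka\}$) gives that $\col(\om,\size{V_\alpha^L}^L)$ over $L$ adds an embedding $i\colon V_\alpha^L \to V_\beta^L$ with $\crit(i)=\ka$ and $i(\ka)=j(\ka)>\alpha$, which is exactly a witness to the virtual extendibility of $\ka$ in $L$ at $\alpha$. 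As this conclusion only asserts that a forcing relation holds over $L$ for a poset lying in $L$, it is absolute to $L$. Finally, cofinally many $\alpha$ suffice: from a witness at some $\alpha>\alpha_0$ one recovers a witness at $\alpha_0$ by restricting the forced embedding to $V_{\alpha_0}^L$, which is elementary from $V_{\alpha_0}^L$ into $V_{i(\alpha_0)}^L$ and still has critical point $\ka$ with $i(\ka)>\alpha_0$.

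To produce the embedding, fix a limit ordinal $\alpha$ with $L_\alpha \prec L$ (so in particular $L_\alpha \models \ZF$); such $\alpha$ form a club proper class, hence are cofinal, and for them $V_\alpha^L=L_\alpha$. Apply the generic extendibility of $\ka$ in $V$ at this $\alpha$: there is a poset $\bbP$ with a generic $H$ carrying an elementary embedding $j\colon V_\alpha^V \to V_\beta^{V[H]}$ with $\crit(j)=\ka$, $\alpha<j(\ka)$ and $\beta>\alpha$. Since $\alpha$ is a limit, $V_\alpha^V \models$ ``there is no largest ordinal'', so by elementarity $\beta$ is a limit as well.

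Now restrict $j$ to the constructible part. Because forcing adds no constructible sets, $L^{V[H]}=L$, and for limit ordinals the relativization of the predicate ``$x\in L$'' computes the genuine hierarchy: $(L)^{V_\alpha^V}=L_\alpha$ and $(L)^{V_\beta^{V[H]}}=L_\beta$. Hence for every formula $\varphi$ and $\bar x \in L_\alpha$ one has $L_\alpha \models \varphi(\bar x)$ iff $V_\alpha^V \models \varphi^L(\bar x)$ iff (by elementarity of $j$) $V_\beta^{V[H]} \models \varphi^L(j\bar x)$ iff $L_\beta \models \varphi(j\bar x)$; in particular $j$ maps $L_\alpha$ into $L_\beta$ and $j\restriction L_\alpha\colon L_\alpha \to L_\beta$ is elementary with $\crit=\ka$ and $\alpha<j(\ka)$. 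Since $L_\alpha \models \ZF$, elementarity yields $L_\beta \models \ZF$, whence $L_\beta=V_\beta^L$. Recalling $V_\alpha^L=L_\alpha$, we have produced, inside $V[H]$, an elementary embedding $V_\alpha^L \to V_\beta^L$ with critical point $\ka$ and $\alpha<j(\ka)$, as required by the first paragraph.

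The main obstacle is the bookkeeping in the third step: one must guarantee that the target level $V_\beta^{V[H]}$ computes $L$ correctly and that its constructible part is exactly $V_\beta^L$, rather than merely some $L_\beta$ with $\beta$ not $L$-worldly. This is precisely why $\alpha$ is chosen with $L_\alpha \models \ZF$ and the property ``$\models \ZF$'' is transported along $j\restriction L_\alpha$; together with $L^{V[H]}=L$ this pins down both $V_\alpha^L$ and $V_\beta^L$. A secondary point is that the embedding we build lives in $V[H]$ and not in the collapse extension of $L$; this is harmless, because Fact~\ref{GS thm} only needs the embedding to exist in the ambient universe in which it is invoked (here $V[H]$), and its conclusion that $\col(\om,\size{V_\alpha^L}^L)$ forces the desired embedding over $L$ is absolute back to $L$.
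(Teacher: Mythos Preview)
Your proof is correct and follows essentially the same approach as the paper: restrict the generic elementary embedding $j:V_\alpha\to V_\beta^{V[H]}$ to the constructible part to obtain $j\restriction V_\alpha^L : V_\alpha^L \to V_\beta^L$, and then invoke Fact~\ref{GS thm} with $M=L$ to convert this into an embedding added by $\col(\om,\size{V_\alpha^L}^L)$ over $L$. The paper simply picks $\alpha$ with $V_\alpha$ a model of a large enough fragment of $\ZFC$ and asserts in one line that the restriction is elementary from $V_\alpha^L$ into $V_\beta^L$, whereas you take the extra care of choosing $\alpha$ with $L_\alpha\prec L$ and spelling out the relativization argument showing $(L)^{V_\alpha}=L_\alpha=V_\alpha^L$ and $(L)^{V_\beta^{V[H]}}=L_\beta=V_\beta^L$; this added bookkeeping is sound but not a different idea.
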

\begin{proof}
To show that $\ka$ is virtually extendible in $L$,
fix a large $\alpha>\ka$ such that $V_\alpha$ is a model of  a sufficiently large fragment of $\ZFC$.
Since $\ka$ is generically extendible,
there is a poset $\bbP$ which forces:
There is $\beta$ and an elementary embedding $j:V_\alpha \to V_\beta^{V^\bbP}$ with
critical point $\ka$ and $\alpha<j(\ka)$.
The restriction $j \restriction V_\alpha^L$ is an elementary embedding from
$V_\alpha^L$ to $V_\beta^L$.
By Fact \ref{GS thm},
the forcing $\col(\om, \size{V_\alpha^L})$ over $L$ adds
an elementary embedding $i:V_\alpha^L \to V_\beta^L$ with critical point $\ka$
and $\alpha<i(\ka)$.
Thus $\ka$ is virtually extendible in $L$.
\end{proof}

%By Theorem \ref{2.2}, if $\ka$ is generically extendible,
%then $\ka$ is virtually extendible in $L$.
Combining this proposition with the next one,
we have the equiconsistencies of  (1), (2), and (3) in Theorem \ref{thm1}.
%\begin{cor}

\begin{prop}\label{2.5}
Suppose $\ka$ is virtually extendible.
Then $\col(\om, <\ka)$ forces that ``$\om_1$ is generically extendible''.
\end{prop}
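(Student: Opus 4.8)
The plan is to show that if $G$ is $\col(\om,<\ka)$-generic over $V$, so that $\ka=\om_1^{V[G]}$, then $\ka$ is generically extendible in $V[G]$, by \emph{lifting} a virtual extendibility embedding of $V$ through the collapse. First I would reduce to a convenient cofinal family of $\alpha$: since $\ka$ is virtually extendible there are proper class many inaccessibles, and it suffices to verify the condition at inaccessible $\alpha$, because given a witness at an inaccessible $\alpha>\alpha_0$ the restriction $j\restriction V_{\alpha_0}$ is elementary into $V_{j(\alpha_0)}$, still has critical point $\ka$, and satisfies $j(\alpha_0)>j(\ka)>\alpha_0$, hence witnesses the condition at $\alpha_0$. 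So fix an inaccessible $\alpha>\ka$; then $V_\alpha^{V[G]}=V_\alpha[G]$, and the models $V_\alpha,V_\beta$ appearing below satisfy $\ZFC$.

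The main obstacle is that a single poset $\bbQ$ over $V[G]$ must \emph{simultaneously} produce the virtual embedding $j\colon V_\alpha\to V_\beta$ and a generic for $j(\col(\om,<\ka))=\col(\om,<j(\ka))$ extending $G$ along which to lift it, all while keeping the codomain of the lift equal to the $V_\beta$ of the final extension. There is also a circularity: the collapse length needed for the lift is $j(\ka)$, which is only determined by $j$, which in turn lives only in a further generic extension. I would break this by fixing the parameters in advance. By Fact \ref{collapsing} the poset $\col(\om,\size{V_\alpha}^{V})=\col(\om,\alpha)$ forces the existence of $\beta$ and $j\colon V_\alpha\to V_\beta$ with critical point $\ka$ and $\alpha<j(\ka)$; passing to a condition $p_0$ deciding the relevant values, I fix inaccessible ordinals $\gamma,\beta$ with $\alpha<\gamma<\beta$ such that $\bbP_0:=\col(\om,\alpha)\restriction p_0$ forces the existence of such a $j$ with $j(\ka)=\gamma$. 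Crucially $\size{\bbP_0}^{V}=\alpha<\gamma$.

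With $\gamma$ now a fixed ordinal of $V$, I would take $\bbQ:=\col(\om,[\ka,\gamma))$ over $V[G]$. If $h$ is $\bbQ$-generic over $V[G]$, then since the Levy collapse is the finite-support product it factors as $\col(\om,<\ka)\times\col(\om,[\ka,\gamma))$, so $H:=G\times h$ is $\col(\om,<\gamma)$-generic over $V$ with $G\subseteq H$, and the final model is $V[H]=V[G]^{\bbQ}$. Inside this one model I need two things. First, the embedding itself: because $\size{\bbP_0}^{V}<\gamma$, the absorption (universality) property of the Levy collapse yields a $\bbP_0$-generic $g_0\in V[H]$ over $V$, and by the choice of $p_0$ together with Fact \ref{collapsing} this gives $j\colon V_\alpha\to V_\beta$ with $\crit(j)=\ka$ and $j(\ka)=\gamma$ living in $V[H]$. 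Second, the lift: every condition in $\col(\om,<\ka)$ has rank below $\ka=\crit(j)$, so $j[G]=G\subseteq H$, and $j(\col(\om,<\ka))=\col(\om,<\gamma)$; the standard lifting lemma then extends $j$ to an elementary $j^{+}\colon V_\alpha[G]\to V_\beta[H]$ with $j^{+}(\ka)=\gamma$, and $j^{+}\in V[H]$.

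Finally I would read off the conclusion: $V_\alpha[G]=V_\alpha^{V[G]}$, and since $\beta$ is a limit with $\col(\om,<\gamma)\in V_\beta$ we have $V_\beta[H]=V_\beta^{V[H]}=V_\beta^{V[G]^{\bbQ}}$. Hence $j^{+}$ is an elementary embedding $V_\alpha^{V[G]}\to V_\beta^{V[G]^{\bbQ}}$ with critical point $\ka$, $\alpha<\gamma=j^{+}(\ka)$, and $\beta>\alpha$, which is exactly what the generic extendibility of $\om_1=\ka$ at $\alpha$ demands. The point to watch throughout is the matching of codomains: one must collapse exactly up to $\gamma$ and no further, so that the lift's target $V_\beta[H]$ coincides with the $V_\beta$ of the final extension; collapsing past $\beta$ would enlarge the final $V_\beta$ beyond the range of any lift of $j$ and destroy elementarity into the target required by the definition.
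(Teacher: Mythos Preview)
Your proof is correct and follows essentially the same strategy as the paper's: fix the value $\gamma=j(\ka)$ in advance (the paper calls it $\delta$), pass from $V[G]$ to a $\col(\om,<\gamma)$-generic extension $V[H]$ extending $V[G]$, absorb a $\col(\om,\alpha)$-generic into $V[H]$ to obtain the virtual embedding $j:V_\alpha\to V_\beta$ there, and then lift $j$ along $G\subseteq H$ to $V_\alpha[G]\to V_\beta[H]$. Your write-up is in fact more explicit than the paper's about the factoring $\col(\om,<\gamma)\cong\col(\om,<\ka)\times\col(\om,[\ka,\gamma))$, about why $\col(\om,<\gamma)\in V_\beta$ (since it is $j(\col(\om,<\ka))$), and about the need to stop the collapse exactly at $\gamma$ so that the codomain of the lift matches $V_\beta^{V[H]}$.
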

\begin{proof}
Take a $(V, \col(\om, <\ka))$-generic $G$.
We show that, in $V[G]$, $\ka$ is generically extendible.
Fix an arbitrary large inaccessible $\alpha>\ka$.
In $V$, we can fix $\beta$ such that
$\Vdash_{\col(\om, \size{V_\alpha})}$``there is an elementary embedding $j:V_\alpha \to V_\beta$
with critical point $\ka$ and $\alpha<j(\ka)$''.
We may assume that there is $\delta$ such that 
$\Vdash_{\col(\om, \size{V_\alpha})}$``$\delta=j(\ka)$''. 
Note that $\size{V_\alpha}=\alpha<\delta<\beta$.

Take a $(V, \col(\om, <\delta))$-generic $H$ with $G \in V[H]$.
We can assume that $G=H \cap \col(\om, <\delta)$. 
Note that $V[H]$ is a generic extension of $V[G]$.
Since $\delta>\size{V_\alpha}$, 
in $V[H]$ we can construct a $(V, \col(\size{V_\alpha}))$-generic filter.
Hence we can find an elementary embedding $j:V_\alpha \to V_\beta$ with
critical point $\ka$ and $j(\ka)=\delta$.
In $V[H]$, $j$ can be extended to $j:V_\alpha[G] \to V_\beta[H]$ in the canonical way.
We know $V^{V[G]}_\alpha=V_\alpha[G]$ and $V^{V[H]}_\beta=V_\beta[H]$.
Hence, in $V[H]$, $j$ is an elementary embedding from $V^{V[G]}_\alpha$ to $V^{V[H]}_\beta$
with critical point $\ka$ and  $\alpha<j(\ka)$, as required.
\end{proof}

\section{generic extendibility of $\om_2$}
In this section, we show that if there is a virtually extendible  cardinal,
then there is a forcing extension in which $\om_2$ is generically extendible.
This result immediately yields 
the equiconsistency of  (1) and (4) in Theorem \ref{thm1}.

In \cite{Laver}, Laver proved that every supercompact cardinal admits a \emph{Laver diamond},
that is, if $\ka$ is supercompact,
then there is a function $d:\ka \to V_\ka$ such that
for every set $x$ and  cardinal $\la \ge \size{\mathrm{trcl}(x)}$,
there is a $\la$-supercompact embedding $j:V \to M$ with
critical point $\ka$ and $j(d)(\ka)=x$.
We prove that a virtual extendible cardinal also admits a Laver diamond.
\begin{lemma}\label{weak laver diamond}
Suppose $\ka$ is virtually extendible.
Then there is a function $d^*:\ka \to V_\ka$
such that for every set $x$ with rank $> \ka$,
$\col(\om, \size{V_{\mathrm{rank}(x)}})$ forces
that ``there is $\beta>\mathrm{rank}(x)$ and
an elementary embedding $j:V_{\mathrm{rank}(x)} \to V_{\beta}$
with critical point $\ka$, $\mathrm{rank}(x)<j(\ka)$,
and $j(d^*)(\ka)=x$''.
\end{lemma}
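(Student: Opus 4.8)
The plan is to adapt Laver's original argument for the supercompact Laver diamond to the virtually extendible setting, where the embedding lives only in a $\col(\om,\cdot)$-generic extension. The standard trick is to define $d^*:\ka\to V_\ka$ by a kind of ``minimal counterexample'' recursion: given $d^*\restriction\gamma$ already defined, let $d^*(\gamma)$ witness the failure of the diamond property at $\gamma$ if possible, and set $d^*(\gamma)=\emptyset$ otherwise. More precisely, I would say $d^*(\gamma)=x$ where $x$ is chosen (by a fixed well-ordering) of least rank such that, letting $\rho=\mathrm{rank}(x)$, there is \emph{no} $\col(\om,\size{V_\rho})$-forced embedding $j:V_\rho\to V_\beta$ with $\mathrm{crit}(j)=\gamma$, $\rho<j(\gamma)$, and $j(d^*\restriction\gamma)(\gamma)=x$; if no such counterexample $x$ exists, set $d^*(\gamma)=\emptyset$. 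The function $d^*$ is definable by recursion inside $V$, so $d^*\in V$ and $d^*:\ka\to V_\ka$ provided the recursion stays bounded below $\ka$ at each stage, which it does since each $d^*(\gamma)\in V_\ka$ by virtual extendibility of cardinals below the relevant rank.

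Next I would suppose toward a contradiction that $d^*$ fails to be a Laver function for $\ka$, i.e.\ there is a set $x$ with $\mathrm{rank}(x)>\ka$ such that $\col(\om,\size{V_{\mathrm{rank}(x)}})$ does \emph{not} force the existence of the desired embedding $j$ with $j(d^*)(\ka)=x$. Choose such an $x$ of least rank, say $\rho=\mathrm{rank}(x)$, and fix a large $\alpha>\rho$ with $V_\alpha$ modelling a large fragment of $\ZFC$ and reflecting the relevant parameters. By Fact~\ref{collapsing}, $\col(\om,\size{V_\alpha})$ forces an elementary embedding $j:V_\alpha\to V_\beta$ with critical point $\ka$ and $\alpha<j(\ka)$. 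The key step is to apply elementarity to the statement asserting that $d^*$ is \emph{not} a Laver function, as computed inside $V_\alpha$. Since $j(\ka)>\alpha>\rho$ and $x,d^*\restriction\rho\in V_\alpha$, the image $j(d^*)\restriction\ka$ agrees with $d^*\restriction\ka$ up to $\ka$ (because $d^*$ is defined by the same recursion below $\ka$, and $\ka=\mathrm{crit}(j)$), so that $j(d^*)(\ka)$ records the value chosen by the recursion at the critical point.

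The crux is a reflection/pullback argument: elementarity gives, on the $V_\beta$ side, that $j(d^*)(\ka)$ is the $\le$-least counterexample of least rank to the diamond property at $\ka$ in the sense computed by $j(V_\alpha)=V_\beta$; one then shows this forces $j(d^*)(\ka)=x$ by minimality of $\rho$ and the choice of $x$. Concretely, inside $V_\alpha$ the function $d^*$ witnesses at stage $\ka$ exactly a counterexample, and pushing forward via $j$ shows $x$ \emph{is} guessed after all, contradicting the assumption that $\col(\om,\size{V_\rho})$ fails to force the embedding. The main obstacle I anticipate is the bookkeeping needed to verify that the forcing $\col(\om,\size{V_\alpha})$ (used to realize the big embedding) and $\col(\om,\size{V_\rho})$ (used in the definition of $d^*$) cohere, so that a counterexample at the smaller collapse genuinely transfers to a counterexample at the larger one; this uses absoluteness of ill-foundedness as in Fact~\ref{GS thm} together with the $\om$-closure-free factoring of the Lévy collapses, and it is the place where the argument is genuinely more delicate than in the supercompact case, since here we cannot directly manipulate an embedding in $V$ but must reason about what is \emph{forced} to exist.
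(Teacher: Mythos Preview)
Your approach is the same as the paper's (a Laver-style minimal-counterexample recursion, then a reflection contradiction), but two technical points in your write-up are off.

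First, to guarantee $d^*:\ka\to V_\ka$, the paper's recursion searches for the counterexample $y$ at stage $\alpha$ \emph{inside $V_\ka$}, not in $V$; this makes $d^*(\alpha)\in V_\ka$ automatic. Your justification ``by virtual extendibility of cardinals below the relevant rank'' is not a proof of this, and without restricting the search to $V_\ka$ there is no reason the minimal-rank counterexample at a given $\alpha<\ka$ has rank below $\ka$.

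Second, your derivation of the contradiction is mis-aimed. You claim one shows $j(d^*)(\ka)=x$ and hence that ``$x$ is guessed after all''; but there is no reason the minimal counterexample as computed on the target side equals $x$, and the argument does not need this. The paper instead lets $x^*=j(d^*)(\ka)$ be whatever it is (by elementarity, a counterexample at $\ka$ in the sense of $V_{j(\ka)}$, of rank $\le\mathrm{rank}(x)$), observes that $j\restriction V_{\mathrm{rank}(x^*)}$ already witnesses that $x^*$ \emph{is} guessed (its target lies in $V_\gamma$), applies Fact~\ref{GS thm} to get this inside $V_\gamma$, and then uses the extra hypothesis $V_{j(\ka)}\prec V_\gamma$ --- arranged at the outset together with $V_\ka\prec V_\theta$ via the observation that such inaccessible $\theta$ exist above any ordinal --- to reflect the existence of the guessing embedding back into $V_{j(\ka)}$, contradicting the choice of $x^*$ there. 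This chain of elementarities $V_\ka\prec V_\theta$ and $V_{j(\ka)}\prec V_\gamma$ is the missing ingredient that makes the ``bookkeeping'' you worry about go through.
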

\begin{proof}
First note the following claim, which is immediate from
the virtual extendibility of $\ka$.
\begin{claim}
For every $\alpha>\ka$, there is an inaccessible cardinal $\theta>\alpha$
with $V_\ka \prec V_{\theta}$.
\end{claim}

Now we define $d^* \restriction \alpha$ for $\alpha<\ka$ by induction.
Let $\alpha<\ka$,
and suppose $d^* \restriction \alpha$ is defined.
Let $\alpha$ be an inaccessible cardinal,
and suppose the following condition $(*)$ holds for $\alpha$ in $V_\ka$:
\begin{quote}
There is a set $y$ with $\mathrm{rank}(y) > \alpha$
such that 
$\col(\om, \size{V_{\mathrm{rank}(y)}})$ forces
that ``there is no $\beta>\mathrm{rank}(y)$ and
an elementary embedding $i:V_{\mathrm{rank}(y)} \to V_{\beta}$
with the critical point $\alpha$, $\mathrm{rank}(y)<i(\ka)$,
and $i(d^*\restriction \alpha)(\alpha)=y$''.
\end{quote}
Fix such a $y_\alpha \in V_\ka$ with minimal rank,
and let $d^*(\alpha)=y_\alpha$.
If $\alpha$ is not inaccessible or $(*)$ fails, then put $d^*(\alpha)=0$.

Now we prove that $d^*$ is as required.
If not, pick a set $x$ such that 
$\col(\om, \size{V_{\mathrm{rank}(x)}})$ forces
that ``there is no $\beta>\mathrm{rank}(x)$ and
an elementary embedding $j:V_{\mathrm{rank}(x)} \to V_{\beta}$
such that the critical of $j$ is $\ka$, $\mathrm{rank}(x)<j(\ka)$,
and $j(d^*)(\ka)=x$''.
Fix a large inaccessible cardinal $\theta$ with $x \in V_\theta$ and
$V_\ka \prec V_{\theta}$.

Take a $(V, \col(\om, \size{V_\theta}))$-generic $G$ and we work in $V[G]$.
Since $\ka$ is virtually extendible in $V$,
there is an elementary embedding $j:V_\theta \to V_\gamma$ for some $\gamma$
such that the critical point of $j$ is $\ka$ and $\theta<j(\ka)$.
%We may assume that 
$\gamma$ is inaccessible in $V$ and
$V_{j(\ka)} \prec V_{\gamma}$.
Note that $j(d^*) \restriction \ka=d^*$.
\begin{claim}
In $V_{j(\ka)}$,
$\col(\om, \size{V_{\mathrm{rank}(x)}})$ forces
that ``there is no $\beta>\mathrm{rank}(x)$ and
an elementary embedding $i:V_{\mathrm{rank}(x)} \to V_{\beta}$
such that the critical point of $i$ is $\ka$, $\mathrm{rank}(x)<i(\ka)$,
and $i(d^*)(\ka)=x$''.
\end{claim}
\begin{proof}
If not, then in $V$,
$\col(\om, \size{V_{\mathrm{rank}(x)}})$  forces
that ``there is  $\beta>\mathrm{rank}(x)$ and
an elementary embedding $i:V_{\mathrm{rank}(x)} \to V_{\beta}$
such that the critical of $i$ is $\ka$, $\mathrm{rank}(x)<i(\ka)$,
and $i(d^*)(\ka)=x$''.
This contradicts to the choice of $x$.
\end{proof}
Hence, in $V_{j(\ka)}$, the condition $(*)$ holds for $\ka$.
Let $j(d^*)(\ka)=x^* \in V_{j(\ka)}$.
By the definition of $d^*$, the rank of $x^*$ is smaller or equal to $x$,
and, in $V_{j(\ka)}$, $\col(\om, \size{V_{\mathrm{rank}(x^*)}})$ forces
that:
\begin{quote}
There is no $\beta>\mathrm{rank}(x^*)$ and
an elementary embedding $i:V_{\mathrm{rank}(x^*)} \to V_{\beta}$
such that the critical of $i$ is $\ka$, $\mathrm{rank}(x^*)<i(\ka)$,
and $i(d^*)(\ka)=x^*$''. 
\end{quote}

Now, $j \restriction V_{\mathrm{rank}(x^*)}$
is an elementary embedding from $V_{\mathrm{rank}(x^*)}$ to
$V_{j(\mathrm{rank}(x^*))}$ with critical point $\ka$,
$j(\ka)>\mathrm{rank}(x^*)$, and $j(d^*)(\ka)=x^*$.
Because $j(\mathrm{rank}(x^*))<\gamma$ and Fact \ref{GS thm},
in $V_\gamma$, $\col(\om, \size{V_{\mathrm{rank}(x^*)}})$ forces 
that ``there is  $\beta>\mathrm{rank}(x^*)$ and
an elementary embedding $i:V_{\mathrm{rank}(x^*)} \to V_{\beta}$
such that the critical of $i$ is $\ka$, $\mathrm{rank}(x^*)<i(\ka)$,
and $i(d^*)(\alpha)=x^*$''.
Because $V_{j(\ka)} \prec V_\gamma$,
the same thing holds in $V_{j(\ka)}$.
This is a contradiction.
\end{proof}

\begin{prop}\label{Laver diamond}
Suppose $\ka$ is virtually extendible.
Then there is a function $d:\ka \to V_\ka$ such that 
for every $\alpha>\ka$ and set $x \subseteq V_\alpha$,
$\col(\om, \size{V_\alpha})$ forces
that ``there is $\beta>\alpha$ and an elementary embedding
$j:V_\alpha \to V_\beta$ with critical point $\ka$,
$\alpha<j(\ka)$, and $j(d)(\ka)=x$''.
\end{prop}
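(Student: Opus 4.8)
The plan is to derive this strengthening of Lemma~\ref{weak laver diamond} by feeding a \emph{coded pair} to the weak Laver function $d^*$ and then pulling the resulting embedding down to the intended collapse by means of Fact~\ref{GS thm}. First I would fix the $d^*$ produced by Lemma~\ref{weak laver diamond} and define $d \colon \ka \to V_\ka$ by decoding the second coordinate: set $d(\xi) = y$ whenever $d^*(\xi)$ is an ordered pair $\seq{z,y}$, and $d(\xi) = \emptyset$ otherwise. Since $V_\ka$ is transitive, $y \in V_\ka$ whenever $\seq{z,y} \in V_\ka$, so $d$ indeed maps $\ka$ into $V_\ka$.

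Now fix $\alpha>\ka$ and $x \subseteq V_\alpha$, put $\tilde x = \seq{\alpha, x}$ and $\rho = \mathrm{rank}(\tilde x)$, so $\alpha < \rho$ and $\rho > \ka$. Applying Lemma~\ref{weak laver diamond} to $\tilde x$, the forcing $\col(\om, \size{V_\rho})$ adds an elementary embedding $j \colon V_\rho \to V_\beta$ with critical point $\ka$, $\rho < j(\ka)$, and $j(d^*)(\ka) = \tilde x = \seq{\alpha, x}$. Since $d$ is defined from $d^*$ by a first-order condition over $V_\rho$ (using $d, d^*, \ka \in V_\rho$), elementarity gives $j(d)(\ka) = x$, the second coordinate of $j(d^*)(\ka)=\seq{\alpha,x}$. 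Restricting $j$ to the initial segment $V_\alpha$ yields an elementary embedding $i := j \restriction V_\alpha \colon V_\alpha \to V_{j(\alpha)}$, still with critical point $\ka$; moreover $\alpha < \rho < j(\ka) = i(\ka)$ and, as $d \in V_\alpha$, we have $i(d)(\ka) = j(d)(\ka) = x$.

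The main obstacle is that this embedding is witnessed over $\col(\om,\size{V_\rho})$, whereas the proposition demands a witness over the strictly smaller $\col(\om,\size{V_\alpha})$ (note $\size{V_\rho} > \size{V_\alpha}$, since $\alpha<\rho$ forces $V_{\alpha+1} \subseteq V_\rho$). This is exactly what Fact~\ref{GS thm} is designed to overcome: the domain $V_\alpha$ and target $V_{j(\alpha)}$ are transitive sets lying in $V$ (forcing adds no ordinals, so $j(\alpha) \in \mathrm{Ord}^V$), and one only needs to control $i$ on the finite set $a = \{d, \ka\} \subseteq V_\alpha$. Applying Fact~\ref{GS thm} with $M = V$, $X = V_\alpha$, $Y = V_{j(\alpha)}$ and this $a$, the forcing $\col(\om,\size{V_\alpha})$ over $V$ adds an elementary embedding $i' \colon V_\alpha \to V_{j(\alpha)}$ with $\crit(i') = \crit(i) = \ka$ and $i'(p) = i(p)$ for every $p \in a$. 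In particular $i'(\ka) = i(\ka) > \alpha$ and $i'(d)(\ka) = i(d)(\ka) = x$, so with $\beta := j(\alpha) > \alpha$ the embedding $i'$ is precisely the required witness. The only points needing care are the routine verifications that $V_\alpha$ is definable in $V_\rho$ from $\alpha$ (so that the restriction $i$ is genuinely elementary into $V_{j(\alpha)}$), and that keeping $\ka$ in $a$ is what secures $\alpha < i'(\ka)$ rather than merely the critical-point inequality $\ka < i'(\ka)$.
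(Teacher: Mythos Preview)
Your proof is correct and follows essentially the same approach as the paper: define $d$ by projecting a coordinate of $d^*$, feed the pair $\seq{\alpha,x}$ (the paper uses $\seq{x,\alpha}$, an immaterial difference) to the weak Laver function, restrict the resulting embedding to $V_\alpha$, and then invoke Fact~\ref{GS thm} to pull the witness down to $\col(\om,\size{V_\alpha})$. Your treatment is in fact slightly more explicit than the paper's in specifying the finite set $a=\{d,\ka\}$ needed for Fact~\ref{GS thm}; the one minor imprecision---asserting $d\in V_\alpha$ for \emph{every} $\alpha>\ka$ rather than only $\alpha\ge\ka+3$---is shared by the paper and is harmless, since the statement ``$j(d)(\ka)=x$'' only makes sense once $d\in V_\alpha$.
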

\begin{proof}
Let $d^*$ be the function from Lemma \ref{weak laver diamond}.
Define the function $d$ on $\ka$ by $d(\alpha)=y$ if $d(\alpha)$ is of the form $\seq{y, \beta}$ for some $\beta \ge \om$,
and $d(\alpha)=0$ otherwise.
We check that $d$ is as required.

Fix $\alpha>\ka$ and $x \subseteq V_\alpha$.
By using a flat pairing function, we can assume that the rank of $\seq{x,\alpha}$ is $\alpha$.
By Lemma \ref{weak laver diamond}, 
$\col(\om, \size{V_{\alpha}})$ forces that
``there is $\beta$ and an elementary embedding $j:V_\alpha \to V_{\beta}$
with critical point $\ka$, $\gamma<j(\ka)$, and
$j(d^*)(\ka)=\seq{x,\alpha}$''.
By the definition of $d$,
$\col(\om, \size{V_{\alpha}})$ also forces that
``there is $\beta$ and an elementary embedding $j:V_{\alpha} \to V_{\beta}$
with critical point $\ka$, $\alpha<j(\ka)$, and
$j(d)(\ka)=x$''.
%Hence in $V^{\col(\om, \size{V_{\gamma}})}$,
%$j \restriction V_\alpha$ is an elementary embedding from $V_\alpha$ to
%$V_{j(\alpha)}$ with critical point $\ka$, $\alpha<j(\ka)$, and
%$j(d)(\ka)=x$.
%Then, by applying Fact \ref{GS thm},
%$\col(\om, \size{V_{\alpha}})$ forces that
%``there is $\beta'$ and an elementary embedding $j:V_\alpha \to V_{\beta'}$
%with critical point $\ka$, $\alpha<j(\ka)$, and
%$j(d)(\ka)=x$''.
\end{proof}

We use the notion of \emph{subcomplete forcing}, which is introduced by Jensen.
We do not present the definition of subcompleteness in this paper, because 
it is complicated and we do not need it.
See Jensen \cite{J} for details.
Here we present several facts about subcomplete forcing from \cite{J}.
\begin{fact}\label{4.3}
\begin{enumerate}
\item Every subcomplete forcing does not add new reals, hence preserves $\om_1$.
\item Every $\sigma$-closed forcing is subcomplete.
\end{enumerate}
\end{fact}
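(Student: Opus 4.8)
The plan is to treat both clauses as consequences of Jensen's basic theory of subcomplete forcing in \cite{J}, since the paper deliberately suppresses the defining clause and only needs these as black-box properties. For (1), I would unwind the characteristic property of subcompleteness applied to a name for a real. Fix a subcomplete $\bbP$ and a $\bbP$-name $\dot x$ for a subset of $\om$, and choose $\theta$ verifying subcompleteness with $\bbP,\dot x \in H_\theta$. Pass to a countable transitive $\bar N$ together with an elementary $\sigma\colon \bar N \to N$, where $N$ is a suitable $\ZFC^-$-model containing $H_\theta$, so that $\bbP$, $\dot x$, $\theta$ and the relevant parameters lie in $\range(\sigma)$; write $\bar\bbP=\sigma^{-1}(\bbP)$ and $\dot{\bar x}=\sigma^{-1}(\dot x)$. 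Pick an $\bar N$-generic $\bar G$ for $\bar\bbP$ deciding $\dot{\bar x}^{\bar G}=\bar x$ for a fixed $\bar x \in \bar N$, which is a genuine real of $V$ since $\bar N$ is countable and transitive.

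Next I would invoke subcompleteness to obtain a condition $p \in \bbP$ so that, in any extension $V[G]$ by $\bbP$ with $p \in G$, there is an elementary $\sigma'\colon \bar N \to N$ in $V[G]$ agreeing with $\sigma$ on $\dot x$ (and the other chosen parameters) and satisfying $\sigma'[\bar G] \subseteq G$. For each $n<\om$ fix $\bar p_n \in \bar G$ deciding ``$n \in \dot{\bar x}$'' in $\bar N$; since $\sigma'(\bar p_n) \in G$, $\sigma'(n)=n$, and $\sigma'(\dot{\bar x})=\dot x$, elementarity of $\sigma'$ forces that $\sigma'(\bar p_n)$ decides ``$n \in \dot x$'' the same way. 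Hence $\dot x^{G}=\bar x \in V$, so $p$ forces $\dot x^G \in V$. Running this argument inside a hull that also captures a prescribed condition makes the set of such $p$ dense, so no new real is added; preservation of $\om_1$ then follows, because a forcing adding no reals cannot collapse $\om_1$.

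For (2) I would observe that $\sigma$-closed forcings satisfy the stronger \emph{completeness} property, which implies subcompleteness. Given $\sigma$-closed $\bbP$, a verifying $\theta$, and $\sigma\colon \bar N \to N$ as above with $\bar\bbP=\sigma^{-1}(\bbP)$ and an $\bar N$-generic $\bar G$, I would enumerate a descending sequence cofinal in $\bar G$, which is possible since $\bar N$ is countable; its pointwise $\sigma$-image is a countable descending sequence in $\bbP$, and $\sigma$-closure in $V$ supplies a lower bound $p$. Taking $\sigma'=\sigma$ itself, one has $\sigma'[\bar G]\subseteq G$ whenever $p \in G$, while the Skolem-hull agreement clause is trivial because $\sigma'=\sigma$. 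Thus $\bbP$ is complete, a fortiori subcomplete.

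The main obstacle is structural rather than mathematical: because the definition of subcompleteness is intentionally not reproduced, a fully self-contained proof cannot be given here, and the honest resolution is to cite \cite{J} for both clauses. If a sketch is nevertheless wanted, the two arguments above are the standard ones, and the only delicate point is the real-absoluteness verification in (1)—namely, checking that the ``closeness'' of $\sigma'$ to $\sigma$ guaranteed by subcompleteness is enough to pin down $\dot x^G$ even without the equality $\sigma'=\sigma$ that the completeness property of (2) would supply.
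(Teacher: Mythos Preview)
Your proposal is correct, and in fact it does strictly more than the paper does: the paper states this as a \emph{Fact} with no proof whatsoever, referring the reader to Jensen \cite{J} and explicitly declining even to reproduce the definition of subcompleteness. Your own assessment that ``the honest resolution is to cite \cite{J} for both clauses'' is precisely the route the paper takes; the sketches you provide for (1) and (2) are the standard arguments and are sound, but they go beyond what the paper offers.
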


\begin{fact}\label{4.4}
Let $\bbP$ be a subcomplete forcing notion, and $\dot \bbQ$ a $\bbP$-name for a
subcomplete forcing notion.
Then the forcing product $\bbP *\dot \bbQ$ is subcomplete.
\end{fact}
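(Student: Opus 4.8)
The statement is Jensen's two-step iteration theorem for subcomplete forcing, and the plan is to verify the defining property of subcompleteness directly for $\bbP * \dot\bbQ$ by lifting the two factors one after the other. Following Jensen's framework, I would start with a suitable large transitive structure $N$ (of the form Jensen requires, containing $\bbP * \dot\bbQ$ and enough of $V$), a countable transitive $\bar N$, and an embedding $\sigma : \bar N \to N$ with $\sigma(\bar\bbP * \dot{\bar\bbQ}) = \bbP * \dot\bbQ$, together with a filter $\bar G * \bar H$ that is $(\bar\bbP * \dot{\bar\bbQ})$-generic over $\bar N$. The goal is to produce a single condition of $\bbP * \dot\bbQ$ which forces the existence, in the generic extension, of a lifted embedding $\sigma''$ sending $\bar G * \bar H$ into the generic and agreeing with $\sigma$ on the prescribed parameters.

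The heart of the argument is a two-stage application. First I would use the subcompleteness of $\bbP$, applied to $\sigma$ and the first-coordinate generic $\bar G$: this yields a condition $p \in \bbP$ such that, whenever $G$ is $\bbP$-generic with $p \in G$, there is in $V[G]$ an embedding $\sigma' : \bar N \to N$ with $\sigma'{}''\bar G \subseteq G$ and with the required agreement with $\sigma$. Working now in $V[G]$, the structure $N[G]$ is again of the admissible form and $\bar N[\bar G]$ collapses correctly, so $\sigma'$ lifts to an embedding of these extended structures; moreover $\dot\bbQ_G$ is, by hypothesis, subcomplete in $V[G]$, and $\bar H$ is $\dot{\bar\bbQ}_{\bar G}$-generic over $\bar N[\bar G]$. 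Applying subcompleteness of $\dot\bbQ_G$ in $V[G]$ to the lifted $\sigma'$ and $\bar H$ then produces a $\bbP$-name $\dot q$ for a condition of $\dot\bbQ$ and (for $H$ generic with $q \in H$) a further embedding $\sigma''$ with $\sigma''{}''\bar H \subseteq H$ still agreeing with $\sigma$ appropriately. The condition $(p, \dot q)$ is then the desired witness: for a $(\bbP * \dot\bbQ)$-generic filter through it, $\sigma''$ maps $\bar G * \bar H$ into the generic as required.

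The main obstacle I anticipate is the bookkeeping of the agreement conditions and the verification that the intermediate data are genuinely of the form demanded by the definition of subcompleteness. The definition forces the lifted map to agree with $\sigma$ on a designated countable parameter set and to respect certain fixed ordinals; one must choose these parameters at the two stages so that the composite second-stage map $\sigma''$ satisfies the single agreement requirement for the full iteration, and one must check that the structure $N[G]$ together with the genericity of $\bar G$ over $\bar N$ places us back in the hypotheses of subcompleteness for $\dot\bbQ_G$ inside $V[G]$. Since we have deliberately not reproduced Jensen's (quite intricate) definition here, I would only indicate this architecture and defer the detailed verification of the agreement and admissibility conditions to Jensen \cite{J}, where the iteration theorem is established in full.
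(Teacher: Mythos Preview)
The paper does not prove this statement at all: it is recorded as a \emph{Fact} and attributed to Jensen \cite{J}, with the explicit remark that the definition of subcompleteness is omitted because it is complicated and not needed for the paper's purposes. So there is no ``paper's own proof'' to compare against; the intended treatment is simply a citation.

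Your proposal goes well beyond this, sketching the actual two-stage lifting argument that Jensen uses. As an outline it is correct: one applies subcompleteness of $\bbP$ to get a condition $p$ and a lifted embedding $\sigma'$ in $V[G]$, checks that the data $(\bar N[\bar G], N[G], \sigma')$ again satisfy the hypotheses of subcompleteness for $\dot\bbQ_G$, and then applies subcompleteness of $\dot\bbQ_G$ in $V[G]$ to obtain $\dot q$ and the final $\sigma''$. You are also right that the delicate part is the bookkeeping---ensuring that the parameter-agreement and Skolem-hull conditions in Jensen's definition are preserved through the intermediate step, and that $N[G]$ (or rather the relevant $L_\tau[A]$-type structure) is again of the required form. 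Since the paper deliberately avoids even stating the definition, your decision to indicate the architecture and defer the verification to \cite{J} is exactly in the spirit of how the paper handles this material; if anything, you have supplied more than the paper does.
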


For a poset $\bbP$,
let $\delta(\bbP)$ be the minimum cardinality of dense subsets of $\bbP$.

\begin{fact}\label{4.5}
Let $\seq{\bbP_\alpha, \dot \bbQ_\alpha \mid \alpha<\beta}$ be 
a revised countable support iteration such that for every $\alpha<\beta$:
\begin{enumerate}
\item $\Vdash_{\bbP_\alpha}$``\,$\dot \bbQ_\alpha$ is subcomplete''.
\item $\Vdash_{\bbP_{\alpha+1}}$``\,$\delta(\bbP_\alpha) \le \om_1$''.
\end{enumerate}
Then $\bbP_\beta$ is subcomplete.
\end{fact}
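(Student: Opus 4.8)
The plan is to argue by induction on the length $\beta$, verifying Jensen's definition of subcompleteness for $\bbP_\beta$ at each stage; note that the hypotheses (1) and (2) are inherited by every initial segment of the iteration, so the induction is legitimate. The case $\beta=0$ is the trivial forcing, which is subcomplete by Fact \ref{4.3}. For a successor $\beta=\gamma+1$ we have $\bbP_\beta=\bbP_\gamma * \dot\bbQ_\gamma$: the induction hypothesis gives that $\bbP_\gamma$ is subcomplete, hypothesis (1) gives that $\Vdash_{\bbP_\gamma}$``$\dot\bbQ_\gamma$ is subcomplete'', and so Fact \ref{4.4} yields at once that $\bbP_\beta$ is subcomplete. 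Thus all the content is concentrated in the limit case, which is where I expect the real work to lie.

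For a limit $\beta$, I would unwind subcompleteness directly. Fix a sufficiently large $\theta$ with $\bbP_\beta\in H_\theta$, a suitable $\ZFC^-$ model $N$ with $H_\theta\subseteq N$, a countable transitive $\bar N$ together with an elementary $\sigma:\bar N\to N$ whose range contains $\bbP_\beta$, $\theta$ and an auxiliary parameter $s$, and a filter $\bar G$ that is $\bar\bbP_\beta$-generic over $\bar N$, where $\bar\bbP_\beta=\sigma^{-1}(\bbP_\beta)$. The task is to produce a condition $p\in\bbP_\beta$ forcing that, in the generic extension by $G\ni p$, there is a lift $\sigma':\bar N\to N$ agreeing with $\sigma$ on the parameters, with $\sigma'[\bar G]\subseteq G$ and with the Skolem-hull equation $\SK^N(\range(\sigma')\cup\delta)=\SK^N(\range(\sigma)\cup\delta)$ for $\delta=\delta(\bbP_\beta)$. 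The strategy is to build $\sigma'$ and $G$ by a single recursion threading the iteration: factor $\bar G$ through the stages $\bar\bbP_\alpha$ and use the induction hypothesis that each proper initial segment is subcomplete to lift the embedding, in a construction that extends the two-step argument behind Fact \ref{4.4} but must be carried out uniformly along the whole iteration rather than one factor at a time. Hypothesis (2), $\Vdash_{\bbP_{\alpha+1}}$``$\delta(\bbP_\alpha)\le\om_1$'', enters precisely here: it ensures that each initial segment is, in the appropriate extension, covered by a dense set of size $\le\om_1$, which is what keeps the range of $\sigma'$ from growing uncontrollably and lets the hull equation be propagated through the recursion.

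The hard part will be the limit case, and within it the bookkeeping of the revised countable support, which I would organize according to whether it is forced that $\cf(\beta)=\om$. When $\cf(\beta)=\om$ the revised countable support permits full support along a cofinal $\om$-sequence, so the image $\sigma'[\bar G]$ can be assembled into a single genuine condition of $\bbP_\beta$; when the cofinality is uncountable the support is genuinely countable and one must instead argue that the finite approximations stabilize and that a suitable master condition exists. The genuinely delicate point is to reconcile the lift and the Skolem-hull requirement with the support structure at these limits, simultaneously guaranteeing that $\sigma'[\bar G]$ generates a condition and that the hull equation survives passage to the limit; this is exactly where the smallness hypothesis (2) is indispensable. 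For the full technical core of this limit argument I would follow Jensen \cite{J}.
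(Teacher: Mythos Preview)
The paper does not prove this statement. It is labeled a \emph{Fact}, and the surrounding text explicitly says ``We do not present the definition of subcompleteness in this paper, because it is complicated and we do not need it. See Jensen \cite{J} for details.'' Fact~\ref{4.5} is quoted from Jensen's work and used as a black box; there is no argument in the paper to compare your sketch against.

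Your proposal is therefore strictly more than what the paper offers: you outline the inductive scaffolding (trivial at $0$, two-step at successors via Fact~\ref{4.4}, and the real work at limits), correctly identify hypothesis~(2) as the mechanism controlling the Skolem-hull condition through the recursion, and then defer to Jensen \cite{J} for the technical core. Since the paper itself simply cites Jensen, your endpoint coincides with the paper's treatment. One small caution: the limit step for revised countable support is genuinely subtle---the support is defined relative to cofinalities in intermediate extensions, and Jensen's argument does not proceed by a naive induction on $\beta$ but rather works directly with the RCS limit and its associated dense sets---so the clean trichotomy you describe is more of a heuristic than an accurate picture of the actual proof. But as a pointer to the literature with a plausible roadmap, your proposal is appropriate, and it agrees with the paper in citing \cite{J} as the source.
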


Recall that \emph{Namba forcing}, denoted by $\mathrm{Nm}$,
is the set of all $T \subseteq {}^{<\om} \om_2$ such that:
\begin{enumerate}
\item For every $t \in T$ and $s \subseteq t$ we have $s \in T$.
\item There is $t \in T$ such that for every $s \in T$, $s \subseteq t$ or
$t \subseteq s$. Such a $t$ is called the \emph{stem} of $T$.
\item For every $s \in T$, if $s$ is an extension of the stem of $T$,
then the set $\{\alpha <\om_2 \mid s^\frown \seq{\alpha} \in T\}$ has cardinality $\om_2$.
\end{enumerate}
For $T_0, T_1 \in \mathrm{Nm}$, define $T_0 \le T_1$ if $T_0\subseteq T_1$.

\begin{fact}\label{4.7}
Suppose the Continuum Hypothesis (CH).
\begin{enumerate}
\item Namba forcing is subcomplete,
hence preserves $\om_1$.
\item $\mathrm{Nm}$ forces that $\cf(\om_2^V)=\om$.
%\item If CH holds, then $\mathrm{Nm}$ does not add new real.
\end{enumerate}
\end{fact}

We are ready to prove the theorem.
\begin{thm}\label{4.8}
Suppose $\ka$ is virtually extendible.
Then there is a forcing extension in which
$\om_1$ is preserved and $\ka=\om_2$ is generically extendible.
\end{thm}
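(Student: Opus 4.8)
The plan is to mimic the strategy of Proposition \ref{2.5}, replacing the countable collapses everywhere by $\om_1$-preserving forcings. First I would arrange $\CH$ in $V$ by a small $\sigma$-closed forcing, which preserves the virtual extendibility of $\ka$ by the usual small-forcing argument (recall $\ka$ is inaccessible), and fix the Laver function $d:\ka\to V_\ka$ provided by Proposition \ref{Laver diamond}. Then I would force with $\col(\om_1,<\ka)$; this is $\sigma$-closed, hence subcomplete by Fact \ref{4.3}, adds no reals, preserves $\om_1$, and makes $\ka=\om_2$. Writing $V[G]$ for the extension, $\CH$ persists there, and the task reduces to verifying that $\ka=\om_2$ is generically extendible in $V[G]$.

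Fix $\alpha>\ka$ in $V[G]$; I must produce an $\om_1$-preserving poset $\bbP$ over $V[G]$ and, in the extension, an elementary embedding $j:V_\alpha^{V[G]}\to V_\beta^{V[G]^{\bbP}}$ with $\crit(j)=\ka$ and $\alpha<j(\ka)$. In $V$ I would use the virtual extendibility of $\ka$ together with $d$ to pin down, in $V^{\col(\om,\size{V_{\bar\alpha}})}$, an embedding $j_0:V_{\bar\alpha}\to V_{\bar\beta}$ for a suitable $\bar\alpha$ above $\alpha$, with $\crit(j_0)=\ka$, $j_0(\ka)=\delta>\alpha$, and with $j_0(d)(\ka)$ coding the $\col(\om_1,<\ka)$-name for $G$ and the auxiliary generic data needed below. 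Since every condition of $\col(\om_1,<\ka)$ is a countable object of rank $<\ka$, $j_0$ fixes it pointwise, so $j_0{}''G=G$ and the lift only requires a generic $G^*\supseteq G$ for $j_0(\col(\om_1,<\ka))=\col(\om_1,<\delta)$; the quotient $\col(\om_1,[\ka,\delta))$ is $\sigma$-closed, hence subcomplete, and forcing it over $V[G]$ supplies the target, with $V_\alpha^{V[G]}=V_\alpha[G]$ and $V_\beta^{V[G][G^*]}=V_\beta[G^*]$.

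The genuine difficulty is that $j_0$ is manufactured by Fact \ref{GS thm} via $\col(\om,\size{V_{\bar\alpha}})$, a poset that collapses $\om_1$: the tree-of-finite-partial-embeddings argument exploits absoluteness of ill-foundedness only for trees of height $\om$, which is why $V_{\bar\alpha}$ must be made countable. Realizing a total embedding equivalent to $j_0$ inside an $\om_1$-preserving extension of $V[G]$ is the crux, and this is exactly where Namba forcing enters. Over $V[G]$ the poset $\bbP$ will be assembled from $\mathrm{Nm}$, which preserves $\om_1$, makes $\cf(\ka)=\cf(\om_2)=\om$, and is subcomplete because $\CH$ holds (Fact \ref{4.7}); the cofinal $\om$-sequence it throws into $\ka$ is what lets the height-$\om$ absoluteness argument run over an $\om_1$-preserving base instead of over a collapse of $\om_1$. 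Composing $\mathrm{Nm}$ with the $\sigma$-closed quotient collapse, and invoking Facts \ref{4.4} and \ref{4.5} for subcompleteness of products and revised countable support iterations, I obtain a single subcomplete $\bbP$, whence $\om_1$ is preserved in $V[G]^{\bbP}$; reading off the lift $\hat\jmath$ of $j_0$ then gives the required generic embedding. I expect the main obstacle to be precisely this reconciliation: showing that the Namba singularization of $\om_2$, rather than a collapse to $\om$, genuinely suffices to realize the virtual embedding while keeping $\om_1$ intact, and that the several subcomplete components amalgamate into one $\om_1$-preserving forcing carrying a master condition for the lift.
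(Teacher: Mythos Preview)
You have correctly identified the two crucial ingredients --- Namba forcing to singularize $\om_2$, and the height-$\om$ tree argument to pull the virtual embedding into an $\om_1$-preserving extension --- but the architecture you propose does not fit them together, and there is a genuine gap.

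The first problem is the target model. With $\col(\om_1,<\ka)$ as the preparation, $j_0(\col(\om_1,<\ka))=\col(\om_1,<\delta)$, so the lift $\hat\jmath$ lands in $V_{\bar\beta}[G^*]$ for a $\col(\om_1,<\delta)$-generic $G^*$. But your $\bbP$ over $V[G]$ contains a Namba generic that is \emph{not} part of $G^*$; hence $V_\beta^{V[G]^\bbP}\neq V_{\bar\beta}[G^*]$, and you cannot ``read off'' $\hat\jmath$ as an embedding into the correct target. For the tree-of-partial-embeddings argument to apply, the true embedding in $V^{\col(\om,\gamma)}$ must have range inside the tree's declared target, and that target must be a genuine rank initial segment of $V[G]^\bbP$. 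In the paper this is solved by making the preparation itself an RCS iteration $\bbP_\ka$ of length $\ka$ that, at each relevant stage $\mu$ (singled out by the Laver function), forces $\col(\mu,d(\mu))*\mathrm{Nm}*\col(\om_1,\size{\bbP_\mu})$. Then $j(\bbP_\ka)=\bbP_{j(\ka)}$ factors as $\bbP_\ka*\col(\ka,\theta)*\mathrm{Nm}*\dotsb*\bbP_{\mathrm{tail}}$, so a single generic $G'$ for $\bbP_{j(\ka)}$ simultaneously extends $G$, contains the Namba generic, and makes $V_{\bar\theta}[G']=V_{\bar\theta}^{V[G']}$ the correct target.

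The second problem is the decomposition you need for the tree. You want $V_\alpha^{V[G]}=\bigcup_{n<\om}X_n$ with each $X_n\in V[G]$ and $\size{X_n}^{V[G]}\le\om_1$, so that $j\restriction X_n$ is computable in $V[G']$ via a bijection $\pi:\om_1\to X_n$ in $V[G]$. Namba over $V[G]$ gives $\cf(\ka)=\om$, but $\size{V_\alpha^{V[G]}}$ is still far larger than $\ka$; and your $\sigma$-closed quotient $\col(\om_1,[\ka,\delta))$ collapses to $\om_1$, so its generic surjection has only countable initial segments in $V[G]$, which is useless for covering $V_\alpha^{V[G]}$ in $\om$ steps. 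What is needed is a $<\ka$-closed collapse $\col(\ka,\theta)$ first: this makes $\size{V_\theta^{V[G]}}=\ka$ via a surjection $h:\ka\to V_\theta^{V[G]}$ whose initial segments $h\restriction\alpha$ lie in $V[G]$; \emph{then} Namba singularizes $\ka$, and $X_n=h``\ka_n$ gives the required pieces. Without weaving this collapse-then-Namba sequence into the preparation (so that $j$ of the preparation reproduces it at stage $\ka$), the argument cannot be completed.
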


\begin{proof}
Suppose $\ka$ is virtually extendible.
%We may assume that CH holds;
%Just collapse $2^\om$ to $\om_1$ by $\sigma$-closed forcing.
%This forcing preserves the virtual extendibility of $\ka$.
Let $d:\ka \to V_\ka$ be a function from Proposition \ref{Laver diamond}.
Let $R=\{\mu <\ka \mid \mu$ is inaccessible, $d``\mu \subseteq V_\mu$, $d(\mu)$ is an ordinal $\ge \mu\}$.
We define the $\ka$-stage revised countable support iteration $\seq{\bbP_\alpha, \dot \bbQ_\alpha \mid \alpha<\ka}$
as the following manner:
\begin{enumerate}
\item Suppose $\alpha \in R$ and $\bbP_\alpha$ forces that $\alpha=\om_2$.
Then $\dot \bbQ_\alpha$ is a $\bbP_\alpha$-name such that
\[
\Vdash_{\bbP_\alpha} \text{``} \dot{\bbQ}_\alpha=\dot{\col}(\alpha, d(\alpha)) * \dot{\mathrm{Nm}} * \dot{\col}(\om_1, \size{\bbP_\alpha}) \text{''}.
\]
\item Otherwise, $\dot \bbQ_\alpha$ is a $\bbP_\alpha$-name with
\[
\Vdash_{\bbP_\alpha} \text{``} \dot{\bbQ}_\alpha=\dot{\col}(\om_1, \size{\bbP_\alpha}) \text{''}.
\]

\end{enumerate}

By Facts \ref{4.3}, \ref{4.4}, \ref{4.5}, and \ref{4.7}, 
$\bbP_\ka$ is subcomplete, in particular preserves $\om_1$.
Moreover, since $\ka$ is inaccessible and $\size{\bbP_\alpha}<\ka$ for every $\alpha<\ka$,
one can check that $\bbP_\ka$ has the $\ka$-c.c.,
and $\bbP_\ka$ forces $\ka=\om_2$.
We shall show that $\bbP_\ka$ forces that ``$\ka$ is generically extendible''.

Take a $(V, \bbP_\ka)$-generic $G$.
To show that $\ka$ is generically extendible in $V[G]$,
fix an arbitrary large inaccessible $\theta>\ka$.
Since $\ka$ is virtually extendible in $V$ and $d$ is a Laver diamond,
there is some large $\gamma>\ka$
such that in $V^{\col(\om, \gamma)}$,
there are $\overline{\theta}>\theta$ and 
an elementary embedding $j:V_\theta \to V_{\overline{\theta}}$
with critical point $\ka$, $\theta<j(\ka)$, and $j(d)(\ka)=\theta$.
We may assume that $\overline{\theta}$ is inaccessible in $V$,
and $\gamma>{\overline{\theta}}$.
We know that $j(\bbP_\ka)=\bbP_{j(\ka)}$ which is of the form $\bbP_\ka *\dot \bbQ_\ka * \bbP_{tail}$.

In $V[G]$, since $\bbP_\ka$ forces that $\ka=\om_2$, $\ka \in j(R)$, and
$j(d)(\ka)=\theta$,
we know that $\bbQ_\ka$ is of the form
$\col(\ka, \theta)*\mathrm{Nm}*\col(\om_1, \size{\bbP_\ka})$.
Now take a $(V[G], 
\col(\ka, \theta)*\mathrm{Nm}*\col(\om_1, \size{\bbP_\ka}))$-generic
$H_0 *H_1*H_2 \in V^{\col(\om,\gamma)}$.
Take a surjection
$h_0 \in V[G][H_0]$ from $\ka$ to $\theta$.
We know $h_0 \restriction \alpha \in V[G]$ for every $\alpha<\ka$.
Since $\theta$ is inaccessible in $V[G]$,
there is a surjection from $\theta$ onto $V_\theta^{V[G]}$.
Hence in $V[G][H_0]$,
we can take a surjection $h:\ka \to V_\theta^{V[G]}$
such that $h \restriction \alpha \in V[G]$ for every $\alpha<\ka$.

In $V[G][H_0][H_1]$,
since $H_1$ is $\mathrm{Nm}$-generic and $\ka=\om_2$, 
there is  a cofinal map from $\om$ into $\ka$.
Fix an $\om$-cofinal sequence $\seq{\ka_n \mid n<\om}$ of $\ka$.
For $n<\om$, let $X_n=h``\ka_n$.
We know that $X_n \in V[G]$, $\size{X_n}^{V[G]} \le \om_1$,
and $\bigcup_n X_n=V_\theta^{V[G]}$.

In $V^{\col(\om, \gamma)}$, we can take
a $(V, \bbP_{j(\ka)})$-generic $G'$
which extends $G*H_0*H_1*H_2$.
We have $V_\theta[G] =V^{V[G]}_\theta$, $V_{\overline{\theta}}[G']=V^{V[G']}_{\overline{\theta}}$,
and we can extend $j$ to $j:V_\theta[G] \to V_{\overline{\theta}}[G']$ by the 
canonical way.
We note that this embedding $j$ is living in $V^{\col(\om,\gamma)}$ but may not  be in $V[G']$.

\begin{claim}
$j\restriction X_n \in V[G']$
for every $n<\om$.
\end{claim}
\begin{proof}
In $V[G]$, fix a bijection $\pi :\om_1 \to X_n$.
$\pi \in V_\theta^{V[G]}$, so we have $j(\pi) \in V_{\overline{\theta}}^{V[G']}$.
Then $j\restriction X_n$ is definable in $V[G']$
as $j(a)=j(\pi)(\pi^{-1}(a))$ for every $a \in X_n$.
\end{proof}

In $V[G']$,
let $T$ be the set of all partial elementary embeddings $i$ 
from $V^{V[G]}_\theta$ to $V^{V[G']}_{\overline{\theta}}$ 
such that:
\begin{enumerate}
\item $\dom(i)=X_n$ for some $n<\om$.
\item the critical point of $i$ is $\ka$,
and $i(\ka)=j(\ka)$.
\end{enumerate}
The set $T$ with the inclusion forms a tree of height $\om$.
Since $j \restriction X_n \in T$ for every $n<\om$,
the set $\{j \restriction X_n \mid n<\om\}$ is a cofinal branch of $T$ living in $V^{\col(\om, \gamma)}$.
So $T$ is ill-founded in $V^{\col(\om, \gamma)}$.
By the absoluteness of the ill-foundedness,
$T$ is also ill-founded in $V[G']$.
If $B \in V[G']$ is a cofinal branch of $T$,
then $B$ generates an elementary embedding $i:V^{V[G]}_\theta \to V^{V[G']}_{\overline{\theta}}$
with critical point $\ka$ and $i(\ka)=j(\ka)$.
Therefore, $V[G']$ is a generic extension of $V[G]$,
and in $V[G']$ we can take an elementary embedding 
from $V^{V[G]}_\theta$ to $V^{V[G']}_{\overline{\theta}}$, as required.
\end{proof}

By the previous theorem, $\om_2$ can be the least generically extendible cardinal.

\begin{cor}\label{3.10}
If $\ZFC$+``there exists a virtually extendible cardinal'' is consistent,
then so is 
$\ZFC$+``$\om_2$ is the least generically extendible cardinal''.
\end{cor}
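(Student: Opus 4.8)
The plan is to prove Corollary \ref{3.10} by combining Theorem \ref{4.8} with Theorem \ref{thm2}. Starting from the consistency of $\ZFC$ plus a virtually extendible cardinal, fix a model $V$ with a virtually extendible $\ka$. By Theorem \ref{4.8}, there is a forcing extension $V[G]$ in which $\om_1$ is preserved and $\ka=\om_2$ is generically extendible. So in $V[G]$ there is a generically extendible cardinal, namely $\om_2^{V[G]}$. The goal is to conclude that in $V[G]$ there is in fact a \emph{least} generically extendible cardinal and that this least one is precisely $\om_2$.

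First I would argue that no cardinal below $\om_2$ is generically extendible in $V[G]$. By definition, a generically extendible cardinal is uncountable, so $\om$ is excluded; the only remaining candidate $<\om_2$ is $\om_1$. To rule out $\om_1^{V[G]}$, the key point is to invoke Theorem \ref{thm2}: if there were a generically extendible cardinal $>\om_2$ in $V[G]$, then $0^{\#}$ would exist in $V[G]$. But one must take care about which cardinal plays the role of the large one. The cleaner route is to observe that $\om_2^{V[G]}=\ka$ is itself generically extendible, so to get leastness it suffices to show $\om_1^{V[G]}$ is \emph{not} generically extendible; then $\ka=\om_2$ is the least.

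The hard part will be excluding $\om_1$ as a generically extendible cardinal in $V[G]$. Here I expect the main obstacle to be that generic extendibility of $\om_1$ is \emph{not} in general ruled out, so I cannot argue abstractly; I must use the specific structure of the iteration $\bbP_\ka$. The plan is to show that $0^{\#}$ does not exist in $V[G]$, and then apply the contrapositive argument suited to $\om_1$. Concretely, since $\ka$ is virtually extendible in $V$, by the cited Bagaria-Gitman-Schindler results $\ka$ is virtually extendible in $L$, and one may begin with a ground model satisfying $V=L$ (so $0^{\#}$ fails in $V$). The forcing $\bbP_\ka$ is a set forcing of size $\ka$, and set forcing cannot add $0^{\#}$; hence $0^{\#}$ fails in $V[G]$ as well. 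If $\om_1^{V[G]}$ were generically extendible, it could not contradict this directly since $\om_1$ is not $>\om_2$; so instead I would show that the generic extendibility of $\om_2$ together with the failure of $0^{\#}$ forces $\om_1$ to fail generic extendibility, using that any generically extendible $\om_1$ would, by the methods of Section 2 (Proposition \ref{2.2}), reflect to virtual extendibility of $\om_1$ in $L$, which is impossible since $\om_1^{V[G]}$ is not inaccessible in $L$ (Fact: virtually extendible cardinals are inaccessible).

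Thus the argument concludes: in $V[G]$ we have that $\ka=\om_2$ is generically extendible, no smaller uncountable cardinal is (by the inaccessibility-in-$L$ obstruction for $\om_1$), and therefore $\om_2$ is the \emph{least} generically extendible cardinal. Since $V[G]$ is a model of $\ZFC$ obtained by set forcing over a model of $\ZFC$ plus a virtually extendible cardinal, the consistency of the latter theory yields the consistency of $\ZFC$ plus ``$\om_2$ is the least generically extendible cardinal'', as desired.
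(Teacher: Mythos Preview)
Your proposal eventually lands on a correct argument and it is essentially the paper's: pass to $V=L$, apply Theorem~\ref{4.8} to make $\ka=\om_2$ generically extendible, and then invoke Proposition~\ref{2.2} to rule out $\om_1^{V[G]}$. The detour through Theorem~\ref{thm2} and $0^{\#}$ is entirely irrelevant and should be deleted --- as you yourself notice, that result concerns cardinals $>\om_2$ and says nothing about $\om_1$. You also do not need ``the specific structure of the iteration $\bbP_\ka$'' at any point.

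The one substantive difference is how you reach the contradiction after applying Proposition~\ref{2.2}. You argue that $\om_1^{V[G]}=\om_1^L$ (using that Theorem~\ref{4.8} preserves $\om_1$) is not inaccessible in $L$ and hence cannot be virtually extendible there. The paper instead chooses $\ka$ to be the \emph{least} virtually extendible cardinal in $L$, so that $\om_1^{V[G]}<\ka=\om_2^{V[G]}$ being virtually extendible in $L$ directly contradicts minimality. Both arguments are valid; the paper's is slightly tidier since it avoids invoking the separate fact that virtually extendible cardinals are inaccessible and does not need to track $\om_1$-preservation explicitly.
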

\begin{proof}
Suppose $V=L$ and $\ka$ is the least virtually extendible cardinal.
By Theorem \ref{4.8}, we can take a generic extension $V[G]$ in which 
$\ka=\om_2$ is generically extendible.
Then $\om_1^{V[G]}$ cannot be generically extendible,
otherwise we have that $\om_1^{V[G]}$ is virtually extendible in $L$
by Theorem \ref{2.2},
which contradicts the minimality of $\ka$.
\end{proof}

The following question is suggested by the referee, but 
the author does not have an answer.
\begin{question}
Starting with two virtually extendible cardinals,
can we force that both $\om_1$ and $\om_2$ are generically extendible cardinals?
\end{question}

\section{Above $\om_2$}
In this section we consider the consistency strength of the existence of a generically extendible $>\om_2$.
First we prove the following lemma which may be a kind of folklore.

\begin{lemma}\label{sharp}
Suppose that there is an elementary embedding $j:L_\alpha \to L_\beta$
with $\om_2 \le \mathrm{crit}(j)<(\mathrm{crit}(j)^+)^L \le \alpha$.
Then $0^{\#}$ exists.
\end{lemma}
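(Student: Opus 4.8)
The plan is to argue by contradiction: assume $0^{\#}$ does not exist and produce a nontrivial elementary embedding of $L$ into itself, which yields $0^{\#}$ by Kunen's theorem. Set $\ka=\mathrm{crit}(j)$. Since $\alpha\ge(\ka^{+})^{L}$ we have $P(\ka)^{L}\subseteq L_{(\ka^{+})^{L}}\subseteq L_\alpha$, so $j$ acts on every subset of $\ka$ lying in $L$, and I can define the derived $L$-ultrafilter
\[
U=\{X\in P(\ka)^{L}\mid \ka\in j(X)\}.
\]
A routine computation (using $j\restriction\ka=\mathrm{id}$ and elementarity, exactly as in the measurable case) shows that $U$ is a nonprincipal ultrafilter on $P(\ka)^{L}$ that is $\ka$-complete with respect to sequences lying in $L$: if $\langle X_i\mid i<\ga\rangle\in L$ with $\ga<\ka$ and each $X_i\in U$, then $\bigcap_{i<\ga}X_i\in U$, because $j$ commutes with such intersections.

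Granting this, the ultrapower map $j_U\colon L\to\ult(L,U)$ is nontrivial with critical point $\ka$, and if $\ult(L,U)$ is well-founded then, being a well-founded proper-class inner model of $V=L$, its transitive collapse is $L$; thus $j_U$ becomes a nontrivial elementary embedding $L\to L$, and Kunen's theorem gives $0^{\#}$, contradicting our assumption. So the whole proof reduces to showing that $\ult(L,U)$ is well-founded, and for this it suffices to prove that $U$ is countably complete in $V$, i.e.\ that $\bigcap_{n<\om}Y_n\neq\emptyset$ whenever $Y_n\in U$ for all $n$: a descending $\in$-chain $[f_0]\ni[f_1]\ni\cdots$ in the ultrapower produces the sets $Y_n=\{\xi<\ka\mid f_{n+1}(\xi)\in f_n(\xi)\}\in U$, and any $\xi\in\bigcap_n Y_n$ gives a descending $\in$-chain $f_0(\xi)\ni f_1(\xi)\ni\cdots$ in $V$, which is absurd.

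The heart of the argument, and the only place where $0^{\#}\notin V$ and $\ka\ge\om_2$ are used, is this countable completeness. Fix $Y_n\in U$ ($n<\om$), and let $a_n<(\ka^{+})^{L}$ be the index of $Y_n$ in the canonical $L$-enumeration $\langle S_a\mid a<(\ka^{+})^{L}\rangle$ of $P(\ka)^{L}$. Since $0^{\#}$ does not exist, Jensen's Covering Lemma applies to the countable set $A=\{a_n\mid n<\om\}$ and gives $Y\in L$ with $A\subseteq Y\subseteq(\ka^{+})^{L}$ and $\size{Y}^{V}\le\om_1$. Here $\ka\ge\om_2$ enters: if $\size{Y}^{L}\ge\ka$ there would be an $L$-injection, hence a $V$-injection, of $\ka$ into $Y$, giving $\size{\ka}^{V}\le\size{Y}^{V}\le\om_1$; but $\ka\ge\om_2$ forces $\size{\ka}^{V}\ge\om_2$, a contradiction. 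Hence $\ga:=\size{Y}^{L}<\ka$, and $W=\langle Z_i\mid i<\ga\rangle=\langle S_a\mid a\in Y\rangle$ is a sequence in $L_\alpha$ of subsets of $\ka$, of $L$-length $<\ka$, containing every $Y_n$.

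Finally I extract the relevant subfamily inside $L$. Working in $L_\alpha$, define $F\colon\ka\to P(\ga)^{L}$ by $F(\xi)=\{i<\ga\mid\xi\in Z_i\}$; then $F\in L_\alpha$, and by elementarity $j(F)(\xi)=\{i<\ga\mid\xi\in j(Z_i)\}$ for every $\xi<j(\ka)$. Evaluating at $\xi=\ka$ gives
\[
I:=j(F)(\ka)=\{i<\ga\mid\ka\in j(Z_i)\}=\{i<\ga\mid Z_i\in U\},
\]
and since $j(F)\in L_\beta$ and $\ka<j(\ka)$, the key point is that $I\in L_\beta\subseteq L$. Now $I$ contains the index of each $Y_n$, so $\langle Z_i\mid i\in I\rangle$ is a sequence in $L$ of length $<\ka$ all of whose members lie in $U$; by $\ka$-completeness $\bigcap_{i\in I}Z_i\in U$. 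But $\bigcap_{i\in I}Z_i\subseteq\bigcap_n Y_n$, so if $\bigcap_n Y_n=\emptyset$ then $\emptyset\in U$, which is absurd. Hence $\bigcap_n Y_n\neq\emptyset$, $U$ is countably complete, and the proof is complete. The main obstacle is exactly this step: a priori the sequence $\langle Y_n\mid n<\om\rangle$ need not lie in $L$, and it is the combination of Covering (to trap all the $Y_n$ inside a single $L$-family), $\ka\ge\om_2$ (to keep that family of $L$-size below $\ka$), and the observation $I=j(F)(\ka)\in L$ (to pick out the $U$-part of the family within $L$) that overcomes it.
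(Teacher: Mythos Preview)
Your proof is correct, and the overall strategy matches the paper's: define the derived $L$-ultrafilter $U$, argue that $\mathrm{Ult}(L,U)$ is well-founded (hence a nontrivial $j:L\to L$, giving $0^{\#}$), and use Jensen's Covering Lemma together with $\ka\ge\om_2$ to trap the relevant countable data inside an $L$-set of $L$-size $<\ka$.

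The execution of the last step differs. The paper covers the witnessing functions $f_n$ themselves by a set $Y\in L$ of $L$-size $<\ka$, then takes a Skolem hull $H\prec L_\gamma$ with $\ka\cup Y\subseteq H$ and $\size{H}^L=\ka$, collapses to $L_\delta$ with $\delta<(\ka^+)^L\le\alpha$, and observes that the collapsed functions $\bar f_n$ now lie in $L_\alpha$, so $j(\bar f_{n+1})(\ka)\in j(\bar f_n)(\ka)$ gives the contradiction directly. You instead cover the sets $Y_n$ (via their indices), build the short $L$-sequence $\langle Z_i\mid i<\gamma\rangle$, and use the neat observation $I=j(F)(\ka)\in L$ to isolate the $U$-part of that sequence inside $L$, finishing via $L$-$\ka$-completeness. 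Your route actually proves the slightly stronger statement that $U$ is countably complete in $V$, and avoids the hull-and-collapse step; the paper's route is a touch more direct once the $\bar f_n$ are in hand, since no appeal to $\ka$-completeness is needed. Both are clean and essentially equivalent in difficulty.
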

\begin{proof}
Suppose $0^{\#}$ does not exist. 
Let $\ka=\mathrm{crit}(j)$ and 
$U=\{X \in \p(\ka) \cap L_\alpha \mid \ka \in j(X)\}$.
$U$ is an $L$-ultrafilter over $\ka$.
We claim that $\mathrm{Ult}(L, U)$ is well-founded.
If this claim is verified, then we can conclude that $0^{\#}$ exists,
but this contradicts  the assumption.

Suppose to the contrary that $\mathrm{Ult}(L, U)$ is ill-founded,
so there are functions $f_n \in L$ ($n<\om$) on $\ka$
such that $\{\gamma<\ka \mid f_{n+1}(\gamma) \in f_n(\gamma) \} \in U$ for every $n$.
Since $0^{\#}$ does not exist,
by the Jensen's covering lemma there is $Y \in L$ such that
$\{f_n \mid n<\om\} \subseteq Y$ and $\size{Y} =\om_1$.
Note that $\size{Y}=\om_1<\om_2 \le \ka$,
hence $\size{Y}^L<\ka$.

In $L$, take a large limit $\gamma>\ka$ with $Y \in L_\gamma$,
and take $H \prec L_\gamma$ with $\ka \cup Y \subseteq H$ and $\size{H}^L = \ka$.
Let $N$ be the transitive collapse of $H$, and $\pi:H \to N$ be the collapsing map.
Take $\delta$ with $N=L_\delta$. Since $H \in L$, we know $\delta<(\ka^+)^L$.
For each $n \in \om$, let $\overline{f}_n=\pi(f_n)$.
Each $\overline{f}_n$ is a function on $\ka$
and $\{\gamma<\ka \mid \overline{f}_{n+1}(\gamma) \in \overline{f}_n(\gamma) \} \in U$.
Now, because $N=L_\delta$ and $\delta<(\ka^+)^L \le \alpha$,
we have $N \in L_\alpha$, and $\{\overline{f}_n \mid n<\om\} \subseteq L_\alpha$. 
By the definition of $U$,
we have $j(\overline{f}_{n+1})(\ka) \in j(\overline{f}_n)(\ka)$ for every $n<\om$,
this is a contradiction.
\end{proof}

Now Theorem \ref{thm2} follows from this lemma.
%
%\begin{}
%If there exists a generically extendible cardinal $>\om_2$,
%then $0^{\#}$ exists.
%\end{thm}
\begin{proof}[Proof of Theorem \ref{thm2}]
Let $\ka$ be a generically extendible cardinal $>\om_2$.
Let $\la=\ka^+$.
Take a poset $\bbP$ and a $(V, \bbP)$-generic $G$
such that, in $V[G]$,
there is $\gamma>\la$ and an elementary embedding 
$j:V_\la \to V_\beta^{V[G]}$
with critical point $\ka$ and $\la<j(\ka)$.
$j\restriction L_{\la}$ is an elementary embedding from $L_{\la}$ to $L_\beta$.
Since $\la >\ka>\om_2^V$,
we know $\om_1^{V[G]}=\om_1^N=\om_1^{V}$ and $\om_2^{V[G]}=\om_2^V \le \ka$.
Hence the assertion is immediate from the previous lemma.
\end{proof}

By using the Dodd-Jensen core model, we can increase the lower bound
of the consistency strength: If there is a generically extendible cardinal $>\om_2$,
then there is an inner model of a measurable cardinal.
However we do not know the exact consistency strength 
of it.
\begin{question}
What is the exact consistency strength of
the existence of a generically extendible cardinal $>\om_2$?
\end{question}
The following question also arises from Corollary  \ref{3.10}.
\begin{question}
Is it consistent that $\om_3$ is the least generically extendible cardinal?
How about $\om_4, \om_5,\dotsc$, and $\om_{\om+1}$?
\end{question}

\section{Indestructibility of generically extendible cardinals}

In this section, we prove indestructibility phenomenons
of generically extendible cardinals without any preparation forcing.

\begin{prop}\label{8.1}
Suppose $\ka$ is generically extendible.
Then the generic extendibility 
of $\ka$ is indestructible 
by $\ka$-c.c.~forcing. 
\end{prop}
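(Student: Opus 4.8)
The plan is to force with the given $\ka$-c.c.\ poset $\bbQ$ and then re-witness the generic extendibility of $\ka$ in the extension by lifting a ground-model witnessing embedding through $\bbQ$. Fix a $(V,\bbQ)$-generic $g$ and an arbitrary $\alpha>\ka$; enlarging $\alpha$ I may assume it is a limit with $\bbQ\in V_\alpha$, so that $V_\alpha^{V[g]}=V_\alpha[g]$. In $V$, the generic extendibility of $\ka$ supplies a poset $\bbP$ forcing an elementary $j\colon V_\alpha\to V_\beta^{V^\bbP}$ with $\crit(j)=\ka$ and $\alpha<j(\ka)$. I would force with $\bbP$ over $V[g]$ to obtain $h$; since $\bbP,\bbQ\in V$, the filters $g$ and $h$ are mutually generic, so $h$ is $\bbP$-generic over $V$ and the embedding $j\colon V_\alpha\to V_\beta[h]$ lives in $V[h]\subseteq V[g][h]$, while $g$ remains $\bbQ$-generic over $V[h]$.

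The core step is to lift $j$ to $\hat\jmath\colon V_\alpha[g]\to V_\beta[h][g^*]$ by setting $\hat\jmath(\sigma_g)=j(\sigma)_{g^*}$, where $g^*$ is a $V_\beta[h]$-generic filter on $j(\bbQ)$ with $j``g\subseteq g^*$. Here the $\ka$-c.c.\ enters as in the classical lifting lemma: every maximal antichain $A$ of $\bbQ$ lying in $V$ has size $<\ka=\crit(j)$, whence $j(A)=j``A$, and by elementarity this is a maximal antichain of $j(\bbQ)$. Thus $j\restriction\bbQ$ is a complete embedding of $\bbQ$ into $j(\bbQ)$ relative to the $V$-antichains, $j``g$ generates a filter meeting every maximal antichain of $j(\bbQ)$ in the range of $j$, and forcing the quotient $j(\bbQ)/j``g$ over $V[g][h]$ is intended to produce $g^*$. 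Granting this, $\hat\jmath$ is elementary by the forcing theorem (any $p\in g$ forcing $\varphi(\sigma)$ has $j(p)\in g^*$ forcing $\varphi(j(\sigma))$), has critical point $\ka$, and satisfies $\hat\jmath(\ka)=j(\ka)>\alpha$. Taking $\beta$ a limit with $\bbP*\dot{(j(\bbQ)/j``g)}\in V_\beta$, the target is $V_\beta^{(V[g])^\bbR}$ for $\bbR=\bbP*\dot{(j(\bbQ)/j``g)}\in V[g]$, so $\hat\jmath\restriction V_\alpha^{V[g]}$ witnesses the generic extendibility of $\ka$ in $V[g]$ at $\alpha$; since $\alpha$ was arbitrary, this finishes the argument. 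As cleaner bookkeeping, once some such embedding is known to exist in an outer model, Fact \ref{GS thm} realizes one inside $(V[g])^{\bbP*\dot{\col}(\om,|V_\alpha[g]|)}$ with the same critical data.

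The step I expect to be the main obstacle is precisely the genericity of $g^*$ over the target $V_\beta[h]$. The classical $\ka$-c.c.\ lifting lemma assumes the target is an inner model of $V$, whereas here $V_\beta[h]$ is itself a generic extension in which $\bbQ$ need no longer be $\ka$-c.c.\ (the witnessing $\bbP$ will typically collapse $\ka$), so maximal antichains of $j(\bbQ)$ in $V_\beta[h]$ can be large and need not lie in the range of $j$. The delicate point is therefore to show that $j``g$ still meets every maximal antichain of $j(\bbQ)$ in $V_\beta[h]$. I would attack this by pushing such an antichain back to a $\bbP$-name in $V$ and running a density argument in the product $\bbQ\times\bbP$, exploiting the mutual genericity of $g$ and $h$ together with the $\ka$-c.c.\ of $\bbQ$ \emph{in $V$} (via nice $\bbQ$-names, built from antichains of size $<\ka$, which are fixed setwise under the pointwise image and so are correctly computed by $j$). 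This interaction between the chain condition in the ground model and the extra generic $h$ in the target is where the real work lies.
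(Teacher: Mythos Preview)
You have correctly identified the obstacle in your own approach, and it is genuine: once $g$ is fixed in advance, there is no evident way to produce a $(V[h], j(\bbQ))$-generic $g^*\supseteq j``g$. The embedding $j\restriction\bbQ$ is complete only relative to antichains lying in $V$; since the witness $\bbP$ will typically collapse $\ka$, maximal antichains of $j(\bbQ)$ in $V[h]$ need not be of the form $j(A)=j``A$. Your suggested density argument in $\bbQ\times\bbP$ does not go through: from the fact that $j(p_0)$ is compatible with some $a\in A$ one cannot descend to $p\le p_0$ in $\bbQ$ with $j(p)$ still compatible with $a$, because the common extension witnessing compatibility need not lie in $j``\bbQ$. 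The ``nice $\bbQ$-names'' idea is also misplaced here: the antichain $A$ is a $\bbP$-name for an object in $V[h]$, not a $\bbQ$-name, so the usual chain-condition computation of $j$ on names does not engage. (Your side remark invoking Fact~\ref{GS thm} likewise does not apply, since that fact requires the target of the embedding to lie in the base model, whereas $V_\beta^{V[h][g^*]}\notin V[g]$.)

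The paper avoids all of this by reversing the order: it does \emph{not} fix the $\ka$-c.c.\ generic first. In your notation, one first takes $h$ generic for the extendibility witness $\bbP$ over $V$, obtaining $j\colon V_\alpha\to V_\beta^{V[h]}$, and then forces with $j(\bbQ)$ over $V[h]$ using a \emph{fresh} generic $g^*$, subject to no constraint whatsoever. From $g^*$ one \emph{derives} $g=\{p\in\bbQ : j(p)\in g^*\}$; the $\ka$-c.c.\ of $\bbQ$ in $V$ ensures each $V$-maximal antichain $A\subseteq\bbQ$ has $j(A)=j``A$ maximal in $j(\bbQ)$ and hence met by $g^*$, so $g$ is $(V,\bbQ)$-generic. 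By construction $j``g\subseteq g^*$, so the lift $V_\alpha[g]\to V_\beta^{V[h][g^*]}$ is immediate, and $V[h][g^*]$ is a generic extension of $V[g]$. This handles the specific $g$ so derived; since for any $p\in\bbQ$ one may choose $g^*\ni j(p)$ and hence $p\in g$, no condition in $\bbQ$ forces failure, and therefore every $\bbQ$-generic extension sees $\ka$ generically extendible.
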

\begin{proof}
Let $\bbP$ be a $\ka$-c.c.\ poset.
Fix an arbitrary large limit ordinal $\alpha>\ka$ such that
$\bbP \in V_\alpha$.
Take a poset $\bbQ$ and a $(V,\bbQ)$-generic $H$
such that, in $V[H]$,
there is $\beta>\alpha$ and an elementary embedding $j:V_\alpha \to V_\beta^{V[H]}$ with
critical point $\ka$ and $\alpha<j(\ka)$.
Then take a $(V[H], j(\bbP))$-generic $G'$.
\begin{claim}
If $A \subseteq \bbP$ is a maximal  antichain with $A \in V$,
then $j``A \cap G' \neq \emptyset$.
\end{claim}
\begin{proof}[Proof of Claim]
Since $\bbP$ has the $\ka$-c.c. in $V$, we have $\size{A}<\ka$.
Then $j(A)=j``A \subseteq j(\bbP) \in V_\beta^{V[H]}$ is a maximal antichain with $j``A \in V[H]$.
Hence $j``A \cap G' \neq \emptyset$.
\end{proof}

Let $G=\{p \in \bbP \mid j(p) \in G'\}$.
By the previous claim, we have that $G \cap A \neq \emptyset$ for every maximal antichain $A \subseteq \bbP$ with $A \in V$.
Moreover one can check that $G$ is directed, that is, for every 
$p, q \in G$ there is $r \in G$ with $r \le p,q$.
%
%For $p, q \in G$, let $D=\{r \in \bbP \mid r \le p,q$ or $r \bot p$ or $r \bot q\} \in V$.
%$D$ is dense in $\bbP$, so we can take a maximal antichain $A \subseteq D$ with $A \in V$.
%If $r \in G \cap A$,  $r$ has to be a common extension of $p$ and $q$.
Therefore $G$ is $(V, \bbP)$-generic, and $j``G \subseteq G'$.
Then $j:V_\alpha \to V_\beta^{V[H]}$ can be extended to
$j:V_\alpha^{V[G]} \to V_\beta^{V[H][G']}$.
Hence, in $V[H][G']$ which is a generic extension of $V[G]$,
 we can take a generic elementary embedding from $V_\alpha^{V[G]}$ to $V_\beta^{V[H][G']}$.
This argument shows that $\bbP$ preserves the  generic extendibility of $\ka$.
\end{proof}

When $\ka=\om_1$, we can strengthen the previous proposition as follows.
\begin{prop}\label{proper indes}
Suppose $\om_1$ is generically extendible.
Then the generic extendibility 
of $\om_1$ is indestructible by  proper forcing. 
\end{prop}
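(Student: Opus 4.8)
The plan is to imitate the proof of Proposition \ref{8.1}, but to replace the role of the $\ka$-chain condition by a master-condition argument extracted from properness. Fix a proper poset $\bbP$ and an inaccessible $\alpha$ with $\bbP \in V_\alpha$ chosen so large that $\p(\bbP) \subseteq V_\alpha$; then $m:=V_\alpha$ contains every maximal antichain of $\bbP$ lying in $V$, so a filter on $\bbP$ is $(V,\bbP)$-generic exactly when it meets every maximal antichain that is an element of $m$. Using the generic extendibility of $\om_1$, I take a poset $\bbQ$ and a $(V,\bbQ)$-generic $H$ so that in $V[H]$ there are $\beta>\alpha$ and an elementary embedding $j:V_\alpha \to N:=V_\beta^{V[H]}$ with $\mathrm{crit}(j)=\om_1$ and $\alpha<j(\om_1)$. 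Since $\alpha$ is inaccessible, $V_\alpha\models \ZFC$, and hence by elementarity $N \models \ZFC$.

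The two structural facts I would establish next are the heart of the matter. Since $V_\alpha\models$ ``$\om_1$ is the first uncountable cardinal'', elementarity gives $j(\om_1)=\om_1^{V[H]}$; combined with $\mathrm{crit}(j)=\om_1$ this yields $\alpha<\om_1^{V[H]}$, so $\alpha$ is countable in $V[H]$, and as $\size{V_\alpha}^V=\alpha$ the whole set $V_\alpha$ is countable in $V[H]$. Consequently $M:=j``V_\alpha \prec N$ is a \emph{countable} (in $V[H]$) elementary submodel of $N$ with $j(\bbP)\in M$. Moreover, from $V\models$ ``$\bbP$ is proper'' elementarity gives $N\models$ ``$j(\bbP)$ is proper''; because $N=V_\beta^{V[H]}\models\ZFC$ it computes $H_\theta$ and the stationarity of subsets of $[H_\theta]^{\om}$ correctly, so properness of $j(\bbP)$ is absolute between $N$ and $V[H]$. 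Thus $j(\bbP)$ is proper in $V[H]$.

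Now I apply properness of $j(\bbP)$ in $V[H]$ to the countable model $M$ to obtain an $(M,j(\bbP))$-generic master condition $q^*\in j(\bbP)$, which may be taken below any prescribed $j(p_0)$ with $p_0\in\bbP$. Forcing with $j(\bbP)$ over $V[H]$ below $q^*$, I get a $(V[H],j(\bbP))$-generic $G'$ with $q^*\in G'$, and set $G=\{p\in\bbP\mid j(p)\in G'\}$. The point is that for every maximal antichain $A\in m$ of $\bbP$, the set $j(A)$ is a maximal antichain of $j(\bbP)$ with $j(A)\in M$, and since $A\subseteq m$ one checks $j(A)\cap M=j``A$; the master condition guarantees $j(A)\cap M=j``A$ is predense below $q^*$, so from $q^*\in G'$ there is $p\in A$ with $j(p)\in G'$, i.e.\ $G\cap A\neq\emptyset$. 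As $m$ contains every maximal antichain of $\bbP$ in $V$, this shows $G$ is $(V,\bbP)$-generic (directedness is verified as in Proposition \ref{8.1}).

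Finally, $j``G\subseteq G'$, so $j$ lifts canonically to $j:V_\alpha^{V[G]}\to V_\beta^{V[H][G']}$, using $V_\alpha^{V[G]}=V_\alpha[G]$ and $V_\beta^{V[H][G']}=N[G']$; its critical point is still $\om_1$ and $\alpha<j(\om_1)$. Since $V[H][G']$ is a generic extension of $V$ via $\bbQ*j(\bbP)$ and contains $G$, it is a generic extension of $V[G]$, so $V[G]\models$ ``$\om_1$ is generically extendible''; placing $q^*$ below $j(p_0)$ for an arbitrary $p_0$ shows this holds below every condition, so $\bbP$ preserves the generic extendibility of $\om_1$. I expect the main obstacle to be precisely the genericity of $G$: with $\bbP$ merely proper, a maximal antichain $A\in V$ may be uncountable, whence $j``A\subsetneq j(A)$ and meeting $j(A)$ no longer suffices. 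The argument turns entirely on first making $V_\alpha$ (hence $M$) countable in $V[H]$ — which is exactly what $\mathrm{crit}(j)=\om_1$ provides — and then extracting from properness a master condition below which each $j``A$ is again predense.
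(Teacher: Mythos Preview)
Your proof is correct and takes essentially the same approach as the paper: both use that the pointwise image of the ground structure under $j$ is countable in $V[H]$, apply properness of $j(\bbP)$ there to obtain a master condition, and pull back the resulting generic filter to a $(V,\bbP)$-generic $G$ so that $j$ lifts. One small point: you fix $\alpha$ to be inaccessible, but the hypothesis ``$\om_1$ is generically extendible'' does not obviously supply inaccessible cardinals in $V$; this is inessential, since all you actually use is $\size{V_\alpha}=\alpha$ (a $\beth$-fixed point suffices) together with enough reflection for $V_\alpha$ to recognize that $\bbP$ is proper. The paper sidesteps this entirely by applying properness to $j``H_\la \prec H_{j(\la)}^{V[H]}$ for a large regular $\la<\alpha$ rather than to $j``V_\alpha \prec V_\beta^{V[H]}$, which also makes the invocation of properness in $V[H]$ a bit more direct.
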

\begin{proof}
Let $\bbP$ be a proper poset.
Fix a large regular cardinal $\la$,
and a large limit ordinal $\alpha>\la$ with $\bbP \in V_\alpha$.
Take a poset $\bbQ$ and a $(V,\bbQ)$-generic $H$
such that, in $V[H]$,
there is $\beta$ and an elementary embedding $j:V_{\alpha} \to V_{\beta}^{V[H]}$ with
critical point $\om_1^V$ and $\alpha<j(\om_1^V)$.
Since $\alpha>\la$ is large, we have that $j(\la)$ is regular and 
$j(\bbP)$ is proper in $V[H]$.
We know $j``H_\la^V \prec j(H_\la^V)=H_{j(\la)}^{V[H]}$.
Since $j(\bbP)$ is proper and $j``H_\la^V$ is countable in $V[H]$,
there is a generic condition $p^* \in j(\bbP)$ for $j``H_\la^V$.
Then take a $(V[H], j(\bbP))$-generic filter $G^*$ with $p^* \in G^*$.
Let $G=\{p \in \bbP \mid j(p) \in G^*\}$.
We show that $G$ is $(V, \bbP)$-generic;
First, take a dense set $D \in V$ in $\bbP$.
Then $j(D) \in j``H_\theta^V$. Since $p^*$ is a generic condition,
$j``D =j(D) \cap j``H_\theta^V$ is predense below $p^*$.
$G^*$ contains $p^*$, hence there is $q \in \bbP$ with
$j(q) \in j``D \cap G^*$, hence $q \in D \cap G$.
By the elementarity of $j$, for each $p,q \in G$,
since $j(p), j(q) \in G'$, we know that $p$ and $q$ are compatible.
Because for every dense set $D$ in $V$, $G^*$ meets $j(D)$,
it is routine to check that $p$ and $q$ have a lower bound in $G$.
By the definition of $G$, we can extend $j$ to $j:V_\alpha^{V[G]} \to V_\beta^{V[H][G^*]}$.
This completes the proof.
\end{proof}

As stated in the introduction,
if there is a proper class of Woodin cardinals,
then every regular uncountable cardinal is generically extendible.
This means that, under the existence of proper class many Woodin cardinals,
the generic extendibility of each regular uncountable cardinal is indestructible by
forcing which preserves its regularity.
This observation suggests the question: Does the generic extendibility of $\ka$ is preserved by
forcing which preserves the regularity of $\ka$?
An answer is negative. In the next section, 
we will show that it is consistent that
$\om_1$ is generically extendible but
there is an $\om_1$-preserving forcing which destroys the generic extendibility of $\om_1$.

On the other hand, the following question arises from 
Laver's indestructibility of supercompactness by directed closed forcing, and 
the destructibility of an extendible cardinal by closed forcing.
\begin{question}
Let $\ka$ be a generically extendible cardinal.
Does every $\mathop{<}\ka$-closed (or $\mathop{<}\ka$-directed closed) forcing
preserve the generic extendibility of $\ka$?
\end{question}

We know only the following partial answer.
In the resulting model $V$ of Theorem \ref{4.8},
the generic extendibility of $\om_2$ is preserved by $\sigma$-closed $<\om_2$-Baire forcing.
Let us sketch the proof.
Fix a $\sigma$-closed $<\om_2$-Baire poset $\bbQ$.
Take a large $\alpha>\om_2$.
In $V$,
we can find a poset $\bbP$ which forces that:
\begin{enumerate}
\item There are $\beta$ and an elementary embedding $j:V_\alpha \to V_\beta^{V^\bbP}$ such that
the critical point $\ka$ and $\alpha<j(\om_2^V)$.
\item There is a family $\{X_n \mid n<\om\} \in V^{\bbP}$ with
$X_n \in V$, $\size{X_n}^V \le \om_1^V$, and $V_\alpha=\bigcup_n X_n$.
\end{enumerate}
Since $\bbQ$ is $<\om_2$-Baire in $V$ and $\size{X_n}^V \le \om_1^V$,
in $V^\bbP$ we can find a descending sequence $\{q_n \mid n<\om\}$ in $\bbQ$
such that $q_n \in \bigcap\{D \in X_n \mid D$ is dense in $\bbQ\}$.
Because $j(\bbQ)$ is $\sigma$-closed in $V_\beta^{V^\bbP}$,
we can find a lower bound $q^* \in j(\bbQ)$ of $\{j(q_n) \mid n<\om\}$.
If $H^*$ is $(V^\bbP, j(\bbQ))$-generic with $q^* \in H^*$,
then $H=\{q \in \bbQ \mid j(q) \in H^*\}$ is $(V, \bbQ)$-generic,
and $j$ can be extended to $j:V_\alpha^{V[H]} \to V_\beta^{V^\bbP[H^*]}$.

\section{Generic absoluteness}
In this section, we study ${\bf \Sigma}_n^1$-absoluteness.
Recall that,
for a class $\Gamma$ of posets and 
$n<\om$, let us say that \emph{${\bf \Sigma}_n^1$-absoluteness for $\Gamma$ holds}
if for every real $r$ and a $\Sigma_n^1$ or $\Pi_n^1$-formula $\varphi(x)$ of the reals,
if $\varphi(r)$ holds in $V$ then so does in $V^{\bbP}$ for every $\bbP \in \Gamma$.
If $\Gamma$ is the class of all posets,
then we call it just \emph{${\bf \Sigma}_n^1$-absoluteness}.

\begin{prop}\label{9.1}
Suppose $\om_1$ is generically extendible.
Then ${\bf \Sigma}^1_3$-absoluteness holds.
\end{prop}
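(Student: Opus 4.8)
The plan is to exploit the standard machinery connecting generic absoluteness to large-cardinal-type reflection, upgraded here by the generic embedding at $\omega_1$. Recall the classical picture: $\Sigma^1_2$-absoluteness is a theorem of $\mathrm{ZFC}$ (Shoenfield), and one level up, $\Sigma^1_3$-absoluteness is the assertion that $\Sigma^1_2$ (equivalently $\Pi^1_2$) statements about reals are preserved under forcing. A $\Sigma^1_3$-formula $\varphi(r) = \exists x\, \psi(r,x)$ with $\psi$ being $\Pi^1_2$. Suppose $\varphi(r)$ holds in $V$, witnessed by some real $x_0$, and fix an arbitrary poset $\bbP$; I want $\varphi(r)$ to hold in $V^{\bbP}$ as well. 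Since $\Sigma^1_3$ is upward persistent in the sense that an existential witness is only added, the genuinely delicate direction is the $\Pi^1_3$ case (equivalently, the \emph{downward} transfer of the $\Sigma^1_3$ witness through the generic embedding), and this is where generic extendibility of $\omega_1$ must be used.

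First I would set up the embedding. Fix a large limit ordinal $\alpha$ with $r, x_0 \in V_\alpha$ and $V_\alpha$ modelling enough $\mathrm{ZFC}$, and large enough that $\bbP$ and the relevant names live below $\alpha$. By generic extendibility of $\omega_1$, there is a poset $\bbQ$ forcing the existence of $\beta > \alpha$ and an elementary embedding $j : V_\alpha \to V_\beta^{V^{\bbQ}}$ with $\mathrm{crit}(j) = \omega_1$ and $\alpha < j(\omega_1)$. The key structural point is that $j$ fixes all reals coordinatewise below the critical point: since $\mathrm{crit}(j) = \omega_1$, every real $r$ (coded as a subset of $\omega$) satisfies $j(r) = r$, because $j$ is the identity on $\omega$ and on hereditarily countable sets below $\omega_1$. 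Thus $j$ is a \emph{$\Sigma^1_2$-correct} map in a strong sense: $V_\alpha$ and $V_\beta^{V^{\bbQ}}$ agree on $\Sigma^1_2$-facts about the reals of $V_\alpha$, and moreover $V_\alpha$'s reals are among those of the target.

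The heart of the argument is then the following two-step absoluteness chase. Step one: in $V^{\bbQ}$, the target $V_\beta^{V^{\bbQ}}$ contains all reals of $V^{\bbQ}$ up to rank $\beta$, in particular a generic object for any $\bbP$-name in $V_\alpha$; combined with Shoenfield absoluteness (which holds internally in both $V_\alpha$ and the well-founded target), a $\Sigma^1_3$-witness existing in \emph{some} forcing extension reflects down through $j$ to a $\Sigma^1_3$-witness already visible from $V_\alpha$'s standpoint, because $\Pi^1_2$-statements are absolute between the two ends of $j$ by Shoenfield applied inside each $V_\gamma$. Step two: I would argue that since $j(\omega_1^V) > \alpha$ and $j$ fixes reals, any $\Sigma^1_3$-statement true about $r$ in $V^{\bbP}$ pulls back to one true in $V_\alpha$, hence in $V$, and conversely pushes forward; the symmetry of ``true in some generic extension'' on both sides of a real-fixing embedding forces $V$ and $V^{\bbP}$ to agree at the $\Sigma^1_3$ level. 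The main obstacle I anticipate is bookkeeping the \emph{direction} of persistence: naively $\Sigma^1_3$ is only upward-absolute, so the real work is showing that a $\Pi^1_3$-statement true in $V$ cannot be destroyed—equivalently, that a $\Sigma^1_3$-statement true in $V^{\bbP}$ was already true in $V$. This downward transfer is exactly what the generic embedding buys, via the fact that $\mathrm{crit}(j)=\omega_1$ guarantees the embedding sees the same reals at the bottom while being able to absorb an arbitrary extension $V^{\bbP}$ into the auxiliary extension $V^{\bbQ}$ (by taking $\alpha$ past $\bbP$ and running the generic-embedding forcing on top), so that a witness appearing after $\bbP$ appears in the target of $j$ and reflects back below $\mathrm{crit}(j)$. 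Getting this absorption to commute with $j$ cleanly—rather than merely producing a witness in an unrelated extension—is the step I would spend the most care on.
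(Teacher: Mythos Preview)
Your outline is correct and follows the same route as the paper. The absorption step you flag as delicate is in fact immediate and needs no commutation with $j$: since $j(\omega_1^V)>\alpha$ and $\mathcal{P}(\bbP)^V\in V_\alpha$, the set $\mathcal{P}(\bbP)^V$ is countable in $V^{\bbQ}$, so a $(V,\bbP)$-generic $G$ can be built inside $V^{\bbQ}$; the witness $s\in\bbR^{V[G]}$ then satisfies the $\Pi^1_2$ formula $\varphi(r,s)$ in $V^{\bbQ}$ by Shoenfield, whence $\exists y\,\varphi(r,y)$ holds in $V_\beta^{V^{\bbQ}}$ and reflects to $V_\alpha$ by elementarity (using $j(r)=r$).
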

\begin{proof}
By the Shoenfiled absoluteness,
it is enough to show that for every poset $\bbP$, $(V,\bbP)$-generic $G$, 
$r \in \bbR$, and a  $\Sigma^1_3$-formula $\exists y \varphi(x,y)$ (where $\varphi$ is $\Pi^1_2$),
if $\exists y \varphi(r,y)$ holds in $V[G]$ then 
so does in $V$.

Fix a large $\alpha$ with $\bbP \in V_\alpha$,
and take a poset $\bbQ$ such that,
in $V^\bbQ$, there is an elementary embedding $j:V_\alpha \to V_\beta^{V^\bbQ}$ with critical point $\om_1^V$
and $j(\om_1^V)>\alpha$.
Since $\p(\bbP)^V$ is countable in $V^{\bbQ}$,
we may assume that $G \in V^\bbQ$.
Hence $V^\bbQ$ is a generic extension of $V[G]$.
In $V[G]$, since $\exists y \varphi(r,y)$ holds,
there is $s \in \bbR^{V[G]}$ such that $\varphi(r,s)$ holds in $V[G]$.
$\varphi(r,s)$ is $\Pi^1_2$, and by the Shoenfield absoluteness,
$\varphi(r,s)$ still holds in $V^\bbQ$.
Hence 
 $\exists y \varphi(r,y)$ holds in $V_\beta^{V^\bbQ}$.
By the elementarity of $j$,
we have that $\exists y \varphi(r,y)$ holds in $V$.
\end{proof}

\begin{prop}\label{9.2+}
Suppose $\om_1$ is generically extendible.
Let $\Gamma$ be a class of $\om_1$-preserving forcing notions such that
every $\bbP \in \Gamma$ preserves the generic extendibility of $\om_1$.
Then ${\bf \Sigma}^1_4$-absoluteness for $\Gamma$ holds.
\end{prop}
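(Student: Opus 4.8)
The plan is to establish two-way $\Sigma^1_4$-absoluteness between $V$ and $V[G]$ for $\bbP \in \Gamma$ by reducing everything to a single downward implication, and then to mimic the proof of Proposition \ref{9.1}, with Shoenfield's theorem replaced by a \emph{bootstrapped} application of Proposition \ref{9.1} inside $V[G]$.

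First I would reduce to the core claim: for every $\Pi^1_3$-formula $\psi$, every real $r \in V$, and every $(V,\bbP)$-generic $G$ with $\bbP \in \Gamma$,
\[
V[G] \models \exists y\, \psi(r,y) \implies V \models \exists y\, \psi(r,y).
\]
Granting this, the upward direction for a $\Sigma^1_4$-formula $\exists y\,\psi$ follows from Proposition \ref{9.1}: a witness $s \in V$ makes $\psi(r,s)$ (which is $\Pi^1_3$) true in $V$, and full $\Sigma^1_3$-absoluteness between $V$ and $V[G]$---which includes $\Pi^1_3$-absoluteness---transfers it up to $V[G]$. The upward direction for a $\Pi^1_4$-formula is the contrapositive of the core claim applied to its $\Sigma^1_4$ negation. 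Thus the whole proposition reduces to the displayed implication.

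To prove it I would copy the skeleton of Proposition \ref{9.1}. Fix a large $\alpha$ with $\bbP \in V_\alpha$ and take, by generic extendibility of $\om_1$ in $V$, a poset $\bbQ$ forcing an elementary embedding $j : V_\alpha \to V_\beta^{V^\bbQ}$ with $\crit(j) = \om_1^V$ and $j(\om_1^V) > \alpha$. As there, $\p(\bbP)^V$ is countable in $V^\bbQ$, so I may assume the given $G$ lies in $V^\bbQ$; then $V^\bbQ$ is a set-generic extension of $V[G]$. Now suppose $s \in V[G]$ witnesses $V[G] \models \psi(r,s)$. The key step, and the only place where the hypothesis on $\Gamma$ is used, is to lift $\psi(r,s)$ from $V[G]$ to $V^\bbQ$: since $\bbP \in \Gamma$ preserves the generic extendibility of $\om_1$ and is $\om_1$-preserving, $\om_1^{V[G]} = \om_1^V$ is generically extendible in $V[G]$, so Proposition \ref{9.1} applies inside $V[G]$ and yields $\Sigma^1_3$-absoluteness, hence two-way $\Pi^1_3$-absoluteness, between $V[G]$ and its generic extension $V^\bbQ$. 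Therefore $V^\bbQ \models \psi(r,s)$, and so $V_\beta^{V^\bbQ} \models \exists y\,\psi(r,y)$ for $\beta$ large enough to contain all reals of $V^\bbQ$ and to compute projective truth correctly. Since $r$ is a real and $\crit(j) = \om_1^V$, we have $j(r) = r$, so by elementarity $V_\alpha \models \exists y\,\psi(r,y)$, whence $V \models \exists y\,\psi(r,y)$.

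The hard part is precisely this lifting across $\Pi^1_3$. In Proposition \ref{9.1} the witness formula was only $\Pi^1_2$, so Shoenfield absoluteness carried it from $V[G]$ up to $V^\bbQ$ for free; at the $\Sigma^1_4$ level the witness is $\Pi^1_3$, which lies beyond Shoenfield, and there is no reason it should transfer upward unless $V[G]$ itself enjoys $\Sigma^1_3$-absoluteness. That is exactly what the hypothesis ``every $\bbP \in \Gamma$ preserves the generic extendibility of $\om_1$'' buys, via a second, internal application of Proposition \ref{9.1}. The remaining points---that $V^\bbQ$ is a common generic extension of $V$ and $V[G]$, that one may choose $G \in V^\bbQ$, and that large $V_\alpha$ and $V_\beta^{V^\bbQ}$ correctly evaluate $\Sigma^1_4$ statements about reals---are routine and identical to the corresponding steps in Proposition \ref{9.1}.
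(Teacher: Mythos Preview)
Your proof is correct and follows essentially the same route as the paper: reduce to the downward $\Sigma^1_4$ implication, use the hypothesis on $\Gamma$ to get that $\om_1$ is generically extendible in $V[G]$, apply Proposition~\ref{9.1} inside $V[G]$ to push the $\Pi^1_3$ witness up to $V^\bbQ$, and then pull back along the generic elementary embedding. The paper's argument is the same, only written more tersely and without your explicit discussion of the reduction step and of $j(r)=r$.
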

\begin{proof}
Again, it is enough to show that for every poset $\bbP \in \Gamma$, $(V,\bbP)$-generic $G$, 
$r \in \bbR$, and a  $\Sigma^1_4$-formula $\exists y \varphi(x,y)$ (where $\varphi$ is $\Pi^1_3$),
if $\exists y \varphi(r,y)$ holds in $V[G]$ then 
so does in $V$.

In $V[G]$, take $s \in \bbR$ such that $\varphi(r,s)$ holds.
By the assumption, $\om_1=\om_1^V$ is generically extendible in $V[G]$.
Hence by Proposition \ref{9.1},
$\varphi(r,s)$ holds in any generic extension of $V[G]$.
In $V$, fix a large $\alpha$,
and take a poset $\bbQ$ such that,
in $V^\bbQ$, there is an elementary embedding $j:V_\alpha \to V_\beta^{V^\bbQ}$ with critical point $\om_1^V$
and $j(\om_1^V)>\alpha$.
We may assume 
that 
$V^\bbQ$ is a generic extension of $V[G]$.
Thus $\varphi(r,s)$ holds in $V^\bbQ$,
and by the elementarity of $j$,
$\exists y \varphi(r,y)$ holds in $V$.
\end{proof}

Since every proper forcing preserves the
generic extendibility of $\om_1$
by Proposition \ref{proper indes}, we have:
\begin{cor}
Suppose $\om_1$ is generically extendible.
Then ${\bf \Sigma}^1_4$-absolute-ness for proper forcing holds.
\end{cor}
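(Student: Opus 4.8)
The plan is to instantiate Proposition \ref{9.2+} with $\Gamma$ taken to be the class of all proper forcing notions, so that the entire argument reduces to verifying that this particular $\Gamma$ satisfies the two hypotheses required by that proposition.

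First I would recall the standard fact that every proper forcing preserves $\om_1$; indeed, properness is designed precisely to preserve stationary subsets of $\om_1$ (equivalently, to add generic conditions over countable elementary submodels), and in particular $\om_1$ is never collapsed. This verifies the first hypothesis of Proposition \ref{9.2+} for $\Gamma$, namely that every $\bbP \in \Gamma$ is $\om_1$-preserving.

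Next I would invoke Proposition \ref{proper indes}, which asserts that whenever $\om_1$ is generically extendible, its generic extendibility is indestructible by proper forcing. This is exactly the second hypothesis of Proposition \ref{9.2+}: every $\bbP \in \Gamma$ preserves the generic extendibility of $\om_1$. Since we are assuming $\om_1$ is generically extendible in $V$, Proposition \ref{proper indes} applies to each member of $\Gamma$.

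With both hypotheses established, Proposition \ref{9.2+} applies directly to $\Gamma$ and yields ${\bf \Sigma}^1_4$-absoluteness for the class of proper forcings, which is the desired conclusion. I do not expect any genuine obstacle here: all of the substantive content is already packaged into Propositions \ref{proper indes} and \ref{9.2+}, and the corollary is simply their conjunction specialized to the class of proper posets.
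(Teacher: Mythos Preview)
Your proposal is correct and matches the paper's own argument: the corollary is obtained by applying Proposition~\ref{9.2+} with $\Gamma$ the class of proper posets, using that proper forcing preserves $\om_1$ and (by Proposition~\ref{proper indes}) preserves the generic extendibility of $\om_1$.
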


On the other hand,
it is possible that ${\bf \Sigma}^1_4$-absoluteness for $\om_1$-preserving forcing does not hold.
\begin{fact}[Remark after Theorem 8 in Bagaria-Friedman \cite{BF}]\label{BF}
If ${\bf \Sigma}^1_4$-absoluteness for $\om_1$-preserving forcing holds,
then every set has a $\#$.
\end{fact}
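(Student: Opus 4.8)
The plan is to prove the contrapositive: assuming that some set has no sharp, I would exhibit an $\om_1$-preserving poset and a ${\bf \Sigma}^1_4$ sentence whose truth value differs between $V$ and a generic extension, contradicting the hypothesis. Since the absoluteness hypothesis covers $\Pi^1_4$ as well as $\Sigma^1_4$ sentences it is genuinely two-directional, so a single sentence true on one side and false on the other suffices.

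First I would normalize the counterexample. If some set lacks a sharp then, coding its transitive closure by a well-ordering, there is a set of ordinals $A$ with $A^\#$ nonexistent; since sharps of ground-model sets are neither created nor destroyed by forcing, I may prepend a $\sigma$-closed collapse $\col(\om_1, \size{A})$ --- which preserves $\om_1$ and adds no reals --- to arrange that $A$ is coded by a subset of $\om_1$ whose sharp still fails. Because $A^\#$ does not exist, Jensen's covering lemma holds relative to $L[A]$: every uncountable set of ordinals is contained in one of the same cardinality lying in $L[A]$. I will use two consequences: $L[A]$ computes cofinalities and successor cardinals above $\om_1$ correctly, so in particular $(\om_2)^{L[A]}=\om_2^V$; and the canonical fine structure of $L[A]$, including the correct computation of cardinals and the availability of $\square$-sequences, is on hand to guide the coding below.

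The heart of the construction is an $\om_1$-preserving coding forcing. Guided by $A$, it first collapses $(\om_2)^{L[A]}$ to $\om_1$ by a $\sigma$-closed poset, and then reshapes and almost-disjointly codes the resulting surjection into a single real $r$. The decisive point --- and the only place where ``$A^\#$ does not exist'' is genuinely consumed --- is twofold. First, the covering lemma is exactly what makes reshaping $\om_1$-preserving: without a covering hypothesis reshaping may collapse $\om_1$, the forcing would fall outside the class $\Gamma$ of $\om_1$-preserving posets, and no contradiction would arise. Second, covering gives $(\om_2)^{L[A]}=\om_2^V$, so the ordinal collapsed is a genuine cardinal of $V$. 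The target sentence $\psi$, with a ground-model real parameter coding the fixed almost-disjoint scheme, asserts the existence of a real that, decoded through the scheme, witnesses a surjection of $\om_1$ onto the coded ordinal; unwinding the decoding of a set of ordinals from a real is what places $\psi$ at the ${\bf \Sigma}^1_4$ level, the single quantifier beyond the $\Sigma^1_3$ statement ``a sharp exists'' being the price of passing from reals to sets. In $V[G]$ the real $r$ witnesses $\psi$ by construction, whereas in $V$ no real codes such a surjection since $(\om_2)^{L[A]}=\om_2^V$ is uncollapsed; hence $\psi$ separates $V$ from its $\om_1$-preserving extension. Note that, because of the reshaping step, this iteration is $\om_1$-preserving but not proper, which is exactly consistent with the earlier fact that ${\bf \Sigma}^1_4$-absoluteness can hold for proper forcing while failing for the wider class of $\om_1$-preserving forcing.

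The main obstacle is the $\om_1$-preservation and complexity bookkeeping for the coding forcing. Using the fine-structural consequences of covering over $L[A]$ --- reshaping through the canonical $L[A]$-hierarchy, the correct computation of cardinals, and the associated scales and $\square$-sequences --- one must verify that reshaping followed by almost-disjoint coding neither collapses $\om_1$ nor pushes $\psi$ above the ${\bf \Sigma}^1_4$ level, and that the scheme can be fixed by a genuine ground-model real so that the target ordinal is pinned down in the extension. This forcing-theoretic and descriptive-set-theoretic accounting is where the real work lies; once it is in place, the contradiction with ${\bf \Sigma}^1_4$-absoluteness for $\om_1$-preserving forcing is immediate.
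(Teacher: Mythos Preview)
The paper does not prove this statement; it is quoted as a Fact from Bagaria--Friedman \cite{BF} and used as a black box in the subsequent corollaries. So there is no ``paper's own proof'' to compare your proposal against.

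That said, your outline is a faithful sketch of the Bagaria--Friedman argument: assume some $A^\#$ fails, invoke Jensen covering over $L[A]$ to get $(\om_2)^{L[A]}=\om_2$ and to make reshaping $\om_1$-preserving, then reshape and almost-disjoint code a collapse of $\om_2$ into a single real, producing a ${\bf \Sigma}^1_4$ sentence true in the extension and false in $V$. Two points deserve tightening if you want a self-contained proof rather than a sketch. First, the parameter in your sentence must be a real in $V$, so the almost-disjoint family and decoding scheme must be fixed in the ground model before any preliminary collapsing; make explicit that the $\sigma$-closed preparatory collapse adds no reals and hence does not disturb this. Second, the complexity bookkeeping is the genuinely delicate step: you should verify carefully that the decoding of ``$r$ codes a surjection of $\om_1$ onto $\om_2^{L[A]}$'' lands at ${\bf \Sigma}^1_4$ and no higher once all the implicit quantifiers over well-orderings and constructibility are unwound --- this is where the argument actually lives, and your proposal gestures at it rather than carrying it out.
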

\begin{prop}
It is consistent that 
$\om_1$ is generically extendible but 
${\bf \Sigma}^1_4$-absoluteness for $\om_1$-preserving forcing does not hold.
\end{prop}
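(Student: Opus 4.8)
The plan is to start from a model where $\omega_1$ is generically extendible but some set lacks a sharp, and then argue that such a model exists and automatically violates $\mathbf{\Sigma}^1_4$-absoluteness for $\omega_1$-preserving forcing. The cleanest route is contrapositive via Fact \ref{BF}: if I can produce a model of ``$\omega_1$ is generically extendible'' in which \emph{not} every set has a sharp, then by Fact \ref{BF} the failure of $\mathbf{\Sigma}^1_4$-absoluteness for $\omega_1$-preserving forcing is immediate, and I am done. So the whole task reduces to engineering a single model separating these two properties.

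The natural candidate is the model already built in Theorem \ref{4.8} (equivalently Corollary \ref{3.10}), applied over $L$. First I would assume $\ZFC$ is consistent with a virtually extendible cardinal $\kappa$, and by Fact 2.1(2) arrange $V = L$ with $\kappa$ virtually extendible in $L$. By Theorem \ref{4.8} there is a forcing extension $L[G]$ in which $\omega_1$ is preserved and $\kappa = \omega_2$ is generically extendible. The key point to check is that $\omega_1^{L[G]}$ is itself generically extendible in $L[G]$: indeed $\omega_1^{L[G]} = \omega_1^L$, and since we only want generic extendibility of $\omega_1$ rather than $\omega_2$, I would instead run Proposition \ref{2.5}, collapsing via $\col(\omega, <\kappa)$ so that $\omega_1$ of the extension is generically extendible. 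This extension is a forcing extension of $L$, so it has the form $L[G]$ for a set-generic $G$, and crucially every set in $L[G]$ is coded by a set in $L$ together with $G$. Since in $L$ no sharp exists (as $0^\#$ does not exist in $L$, nor does $x^\#$ for $x \in L$), and set forcing over $L$ cannot create sharps for sets already in $L$, there will be a set with no sharp in $L[G]$ — for instance any real in $L$ has no sharp there.

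The heart of the argument is therefore the following chain: in $L[\,G\,]$ obtained from Proposition \ref{2.5}, $\omega_1$ is generically extendible, yet there is a set (indeed a real) without a sharp. I would make the no-sharp claim precise: if $r$ is a real in $L$, then $r^\#$ exists in $L[G]$ iff it exists in $L$ (set forcing preserves failure of sharps for ground-model sets, by the standard absoluteness of $0^\#$-type statements under set forcing), and $r^\#$ does not exist in $L$. Hence not every set has a sharp in $L[G]$. By the contrapositive of Fact \ref{BF}, $\mathbf{\Sigma}^1_4$-absoluteness for $\omega_1$-preserving forcing fails in $L[G]$. Since $\omega_1$ is generically extendible in $L[G]$, this is exactly the consistency statement claimed.

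The main obstacle I anticipate is the sharps-are-not-added step: I must be sure that the specific collapse $\col(\omega, <\kappa)$ (a set forcing over $L$) does not inadvertently add a sharp for any ground-model set. This follows because the existence of $x^\#$ for $x$ in the ground model is a $\Sigma^1_2(x)$-type statement whose truth is preserved and reflected by set forcing (one cannot create an $L[x]$-indiscernible class by a set-sized forcing when $V=L[x]$ has none), so no sharp of an $L$-set appears in $L[G]$. A secondary point needing care is confirming that $\col(\omega,<\kappa)$ is $\omega_1$-preserving in the relevant sense for invoking Proposition \ref{2.5} — but this is built into that proposition, which forces ``$\omega_1$ is generically extendible'' directly. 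Once these two technical confirmations are in place, the result follows by assembling Proposition \ref{2.5}, the no-new-sharps fact, and Fact \ref{BF}.
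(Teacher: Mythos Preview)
Your proposal is correct and follows essentially the same route as the paper: use Proposition~\ref{2.5} over $L$ (with a virtually extendible $\kappa$) to obtain a model in which $\omega_1$ is generically extendible while $0^{\#}$ does not exist, and then invoke the contrapositive of Fact~\ref{BF}. The paper's proof is simply the two-line version of what you wrote; your additional discussion of why set forcing over $L$ cannot create $0^{\#}$ (e.g.\ because sufficiently large successor cardinals are computed the same in $L$ and $L[G]$, whereas $0^{\#}$ would force every uncountable cardinal of $L[G]$ to be inaccessible in $L$) is a reasonable elaboration of a point the paper leaves implicit. One minor correction: the existence of $x^{\#}$ is not literally $\Sigma^1_2(x)$, so your stated justification for the no-new-sharps step is a bit off, but the conclusion itself is standard and correct.
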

\begin{proof}
By Theorem \ref{2.5}, we can construct a model in which $\om_1$ is generically extendible but $0^{\#}$ does not exist.
This is a required model.
\end{proof}
Now combining Fact \ref{BF} with Proposition \ref{9.2+},
we have the destructibility of the generic extendibility of $\om_1$.
\begin{cor}
\begin{enumerate}
\item Suppose every $\om_1$-preserving forcing preserves the
generic extendibility of $\om_1$.
The every set has a $\#$.
\item It is consistent that
$\om_1$ is generically extendible,
and there is an $\om_1$-preserving forcing
which destroys the 
generic extendibility of $\om_1$.
\end{enumerate}
\end{cor}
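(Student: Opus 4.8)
The plan is to obtain both parts by composing results already proved in this section: part (1) is a direct chaining of Proposition \ref{9.2+} with Fact \ref{BF}, and part (2) is the contrapositive of part (1) applied inside the model furnished by the preceding proposition. No new forcing construction is needed.

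For part (1), I read the hypothesis under the standing assumption that $\om_1$ is generically extendible, since the phrase ``preserves the generic extendibility of $\om_1$'' presupposes that $\om_1$ has this property (otherwise the premise is vacuously true and the implication is false, e.g.\ in $L$, where $\om_1$ is not generically extendible but sharps need not exist). Take $\Gamma$ to be the class of \emph{all} $\om_1$-preserving forcing notions. The hypothesis says precisely that every $\bbP \in \Gamma$ preserves the generic extendibility of $\om_1$, which is exactly the requirement on $\Gamma$ in Proposition \ref{9.2+}. Applying that proposition yields ${\bf \Sigma}^1_4$-absoluteness for $\om_1$-preserving forcing, and then Fact \ref{BF} immediately gives that every set has a $\#$.

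For part (2), I would pass to a model $M$ witnessing the preceding proposition, that is, a model in which $\om_1$ is generically extendible while $0^{\#}$ does not exist (such $M$ arises from Proposition \ref{2.5} over $L$). In $M$ the nonexistence of $0^{\#}$ shows that ``every set has a $\#$'' fails; since $\om_1$ is generically extendible in $M$, part (1) applies there, and its contrapositive gives that in $M$ \emph{not} every $\om_1$-preserving forcing preserves the generic extendibility of $\om_1$. Equivalently, over $M$ there is an $\om_1$-preserving forcing destroying the generic extendibility of $\om_1$, which together with the generic extendibility of $\om_1$ in $M$ is exactly the asserted consistency statement.

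Since the whole argument is a bookkeeping composition, I do not expect a substantive obstacle. The one point requiring care is the logical form of part (1): it holds only under the standing hypothesis that $\om_1$ is generically extendible, and the contrapositive step in part (2) is legitimate precisely because the witnessing model $M$ satisfies this hypothesis, so that ``destroys'' is not vacuous. Pinning this down is the only place a reader might stumble.
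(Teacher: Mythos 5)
Your proposal is correct and follows the paper's own proof essentially verbatim: part (1) chains Proposition \ref{9.2+} (taking $\Gamma$ to be the class of all $\omega_1$-preserving posets) with Fact \ref{BF}, and part (2) applies the contrapositive of (1) inside a model where $\omega_1$ is generically extendible but $0^{\#}$ does not exist, obtained from Proposition \ref{2.5} over $L$. Your additional remark that (1) must be read under the standing hypothesis that $\omega_1$ is generically extendible (as Proposition \ref{9.2+} requires, and so that ``destroys'' in (2) is non-vacuous) is a sound clarification of a point the paper leaves implicit.
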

\begin{proof}
(1). By Proposition \ref{9.2+},
if every $\om_1$-preserving forcing preserves the
generic extendibility of $\om_1$,
then ${\bf \Sigma}^1_4$-absoluteness for $\om_1$-preserving forcing holds.
The conclusion follows from Fact \ref{BF}.

(2). Suppose $\om_1$ is generically extendible but $0^{\#}$ does not exist.
By (1), there is an $\om_1$-preserving forcing which destroys the 
generic extendibility of $\om_1$.
\end{proof}

\section{Boolean valued second order logic}

In this section, we study some applications of generically extendible cardinals to Boolean valued second order logic.
See Ikegami-V\"a\"an\"anen \cite{IV} and Ikegami \cite{IV2} for Boolean valued second order logic.
Here we recall some definitions and facts about it.

A sentence of Boolean valued second order logic is 
a sentence of second order logic of a relational language $\{R_0,\dotsc, R_n\}$,
where each $R_i$ is an $m_i$-ary relation symbol.
A structure of Boolean valued second order logic
is of a form $M=(A, \mathbb{B}, \{R_0^M,\dotsc, R_n^M\})$
where: 
\begin{enumerate}
\item $A$ is a non-empty set.
\item $\mathbb{B}$ is a complete Boolean algebra.
\item $R_i^M$ is a function from ${}^{m_i} A$ to $\mathbb B$.
\end{enumerate}
Each $R_i^M$ can be seen as a $\mathbb B$-name,
and whenever $G$ is $(V,\mathbb B)$-generic,
the interpretation $R_i^M/G$ of $R_i^M$ by $G$
is an $m_i$-ary relation on $A$.
Let $M/G=(A,\{R_i^M/G\})$, this is a relational structure.

For a
structure of Boolean valued second order logic
$M=(A, \mathbb{B}, \{R_i^M\})$
and a sentence $\varphi$,
let us say that 
$M$ is a \emph{model of $\varphi$}
if $\mathbb B$ forces that
``$M/\dot G$ is a model of $\varphi$ with respect to the full semantics'',
where $\dot G$ is a $\mathbb B$-name for a canonical generic filter\footnote{
This definition is different from the original one in \cite{IV}, but is equivalent. See Lemma 2.4 in \cite{IV}.}.

Let us consider some cardinal numbers defined by strong logic.
The compactness number of the full second order logic $\ka^2$
is the least
cardinal $\ka$ such that for every theory $T$ of second order logic,
if every subtheory $T' \subseteq T$ with cardinality $<\ka$ has a model
(with respect to the full semantics), then $T$ has a model.
Ikegami \cite{IV2}  defined
the compactness number of Boolean valued second order logic $\ka^{2b}$ as an analog of $\ka^2$:
$\ka^{2b}$ is the least cardinal $\ka$ such that for every theory $T$ of  Boolean valued second order logic,
if every subtheory $T' \subseteq T$ with cardinality $<\ka$ has a model,
then $T$ has a model.

Magidor \cite{Magidor} proved that
$\ka^2$ is equal to the least extendible cardinal,
and Ikegami proved an analog result
for $\ka^{2b}$:
\begin{fact}[Ikegami \cite{IV2}]
$\ka^{2b}$  is equal to the least generically extendible cardinal.
\end{fact}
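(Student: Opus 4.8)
The plan is to prove the two inequalities separately, mirroring Magidor's classical argument that the least extendible cardinal equals the second-order compactness number, but replacing genuine elementary embeddings $V_\alpha \to V_\beta$ with generic ones. Let $\ka_0$ denote the least generically extendible cardinal. First I would show $\ka^{2b} \le \ka_0$: I would take a theory $T$ of Boolean valued second order logic such that every subtheory of size $<\ka_0$ has a model, and argue that $T$ itself has a model. The key move is to code $T$ and a choice of witnessing models into some $V_\alpha$ with $\alpha > \ka_0$ large enough that $V_\alpha$ reflects the relevant satisfaction facts. Applying generic extendibility of $\ka_0$, I pass to a poset $\bbP$ forcing an elementary embedding $j:V_\alpha \to V_\beta^{V^\bbP}$ with critical point $\ka_0$ and $\alpha < j(\ka_0)$. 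Since $T$ has size $\le \ka_0 \le j(\ka_0)$-many subtheories captured below $j(\ka_0)$, elementarity lets me transfer the existence of a model for each small subtheory of $j(T)$ that lies below $\ka_0$ into a model for $T$ in $V_\beta^{V^\bbP}$; because the notion of a model of Boolean valued second order logic is itself a forcing statement about $M/\dot G$, the model that $j$ produces is genuinely a Boolean valued model back in $V$. The delicate point here is that ``having a model'' in this logic is already a Boolean-valued (generic) assertion, so the generic embedding composes naturally with the generic semantics.

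For the reverse inequality $\ka_0 \le \ka^{2b}$, I would show that $\ka^{2b}$ is itself generically extendible, whence $\ka_0 \le \ka^{2b}$ by minimality. Here I adapt Magidor's converse: given $\alpha > \ka^{2b}$, I build a theory $T_\alpha$ of Boolean valued second order logic whose models code, via a predicate for a would-be embedding, an elementary map $j:V_\alpha \to V_\beta$ fixing ordinals below $\ka^{2b}$ and sending $\ka^{2b}$ above $\alpha$. The theory should be arranged so that every $<\ka^{2b}$-sized subtheory is satisfiable — this uses reflection of the structure $(V_\alpha,\in)$ below $\ka^{2b}$ — while a model of the full theory $T_\alpha$ is exactly a (Boolean valued) elementary embedding with critical point $\ka^{2b}$ and $\alpha < j(\ka^{2b})$. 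By the compactness property defining $\ka^{2b}$, the full theory $T_\alpha$ has a model, and because models of Boolean valued second order logic live in a forcing extension (the interpretation $M/G$ is read off a generic $G$ over the complete Boolean algebra $\mathbb B$), this model \emph{is} a generic elementary embedding $j:V_\alpha \to V_\beta^{V^\bbP}$ of the required form. Thus $\ka^{2b}$ satisfies the definition of generic extendibility.

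The main obstacle I anticipate is the precise encoding on the $\ka_0 \le \ka^{2b}$ side: I must design the second-order theory so that the full semantics interpreted through a generic filter forces the embedding to be elementary $V_\alpha \to V_\beta$ with the correct critical point and $\alpha < j(\ka^{2b})$, and so that the cardinality of each subtheory is genuinely below $\ka^{2b}$ while reflection guarantees its satisfiability. Matching the Boolean valued semantics (where a ``model'' is a $\mathbb B$-name evaluated by $\dot G$) against the generic embedding semantics (where $j$ and $V_\beta^{V^\bbP}$ live in $V^\bbP$) is exactly the conceptual bridge that makes $\ka^{2b}$ the generic analog of Magidor's $\ka^2$; I expect the bulk of the work to be checking that the forcing hidden inside the phrase ``$M$ is a model of $\varphi$'' is the same forcing that witnesses generic extendibility, so that no extra consistency strength leaks in either direction. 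The easier direction is $\ka^{2b}\le\ka_0$, since there the generic embedding is given and I only need to apply elementarity; the encoding direction is where care is required.
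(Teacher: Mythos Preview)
The paper does not prove this statement; it is recorded as a Fact due to Ikegami \cite{IV2} (a paper listed as in preparation) with no argument supplied. There is therefore nothing in the present paper to compare your attempt against.

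That said, your plan is the natural adaptation of Magidor's classical argument and the outline is sound. For $\ka^{2b}\le\ka_0$, the point is that $j``T$ is a \emph{single} subtheory of $j(T)$ of size $<j(\ka_0)$ (take $\alpha$ a strong limit so $\size{V_\alpha}=\alpha<j(\ka_0)$); by elementarity it has a Boolean valued model in $V_\beta^{V^\bbP}$, and then the argument of Lemma~\ref{basic boolean model} pulls that model back to $V$. Your phrasing about ``each small subtheory of $j(T)$ that lies below $\ka_0$'' is slightly off --- it is $j``T$ itself that does the work. For $\ka_0\le\ka^{2b}$, the Magidor-style theory $T_\alpha$ is the right idea, but small subtheories are normally satisfied in $(V_\alpha,\in)$ with the constants interpreted by the identity, not via a separate reflection argument; in particular the theory should not pin down the critical point in a single sentence, and the standard route first yields a generic embedding with critical point $\le\ka^{2b}$ and then concludes by minimality. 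Your identification of the real burden --- checking that a Boolean valued model of $T_\alpha$ unpacks, via its generic filter, into a genuine generic elementary embedding into some $V_\beta^{V^\bbP}$ --- is accurate.
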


The Hanf number of the second order logic $h^2$ 
is the least cardinal $\ka$
such that for every sentence $\varphi$ of second order logic,
if $\varphi$ has a model (with respect to the full semantics) of cardinality $\ge \ka$,
then $\varphi$ has  models with arbitrary large cardinalities.
It is known that $h^{2}$ is greater than the first fix point of the $\beth$-function,
and if there is an extendible cardinal,
then $h^2$ is strictly smaller than the least extendible cardinal (\cite{Magidor}).

We define the cardinality of a Boolean valued second order structure $(A, \mathbb B, \{R_i^M\})$  as the 
cardinality of $A$.
The Hanf number of  Boolean valued second order logic $h^{2b}$ is defined  as the least cardinal $\ka$
such that for every sentence $\varphi$ of Boolean valued second order logic,
if $\varphi$ has a model of cardinality $\ge \ka$,
then $\varphi$ has  models with arbitrary large cardinalities.

In Ikegami-V\"a\"an\"anen \cite{IV}, they showed that, under a certain assumption, 
a supercompact cardinal is an upper bound of $h^{2b}$.
We prove that a generically extendible cardinal is an upper bound of
$h^{2b}$, and $h^{2b}$ can be small as $\om$ or $\om_1$.

\begin{lemma}\label{basic boolean model}
Let $\varphi$ be a sentence of Boolean valued second order logic,
and $\bbP$  a poset.
Let $\alpha$ be an ordinal,
and suppose $\bbP$  forces that ``there is a model $M=(\alpha, \mathbb B, \{R_i^M\})$ of $\varphi$''.
Then in $V$, there is a model $N$ of $\varphi$ which is of the form 
$(\alpha, \mathbb C, \{R^N_i\})$.
\end{lemma}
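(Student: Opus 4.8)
The plan is to absorb the two layers of forcing into a single complete Boolean algebra living in $V$, and then to read off the relations of the desired model $N$ as Boolean values. Unwinding the hypothesis together with the definition of ``model of $\varphi$'': there is a $\bbP$-name $\dot{\mathbb{B}}$ for a complete Boolean algebra and $\bbP$-names $\dot R_i$ for functions ${}^{m_i}\alpha \to \dot{\mathbb{B}}$ such that $\Vdash_\bbP$ ``$\Vdash_{\dot{\mathbb{B}}}$ the structure $(\alpha, \{\dot R_i/\dot G\})$ is a model of $\varphi$ with respect to the full semantics'', where $\dot G$ names the canonical $\dot{\mathbb{B}}$-generic. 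Since a complete Boolean algebra is a set, $\bbP * \dot{\mathbb{B}}$ is a set forcing, and the above is really a single statement about the iteration: writing $\dot G$ now for the name of the generic on the second coordinate of $\bbP * \dot{\mathbb{B}}$, we have $\Vdash_{\bbP * \dot{\mathbb{B}}}$ ``$(\alpha, \{\dot R_i/\dot G\}) \models \varphi$ in the full semantics''. Note that $\alpha$ and every tuple $\bar a \in {}^{m_i}\alpha$ lie in $V$, which is exactly what makes the construction below possible.

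First I would set $\mathbb{C} = \mathrm{RO}(\bbP * \dot{\mathbb{B}})$, the regular open algebra of the iteration computed in $V$; this is a complete Boolean algebra and an element of $V$, since $\bbP \in V$ and $\dot{\mathbb{B}}$ is a $\bbP$-name in $V$. For each $i$ and each tuple $\bar a \in {}^{m_i}\alpha$, define
\[
R_i^N(\bar a) = \norm{\,\bar a \in \dot R_i/\dot G\,}_{\mathbb{C}} \in \mathbb{C},
\]
the Boolean value, taken in $\mathbb{C}$, of the forcing statement ``$\bar a$ belongs to the interpreted relation''. Each $R_i^N$ is then a function from ${}^{m_i}\alpha$ to $\mathbb{C}$ definable in $V$, so $N = (\alpha, \mathbb{C}, \{R_i^N\})$ is a Boolean valued second order structure in $V$ with domain $\alpha$.

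It remains to verify that $N$ is a model of $\varphi$, i.e. that $\Vdash_{\mathbb{C}}$ ``$N/\dot H \models \varphi$'', where $\dot H$ names the canonical $\mathbb{C}$-generic. Forcing with $\mathbb{C}$ over $V$ yields precisely the extension $V^{\bbP * \dot{\mathbb{B}}}$, and if $H$ is $\mathbb{C}$-generic with $G$ the induced generic on the $\dot{\mathbb{B}}$-coordinate, then by the truth lemma and the choice of the $R_i^N$ we have, for every tuple $\bar a$, that $\bar a \in R_i^N/H$ iff $R_i^N(\bar a) \in H$ iff $\bar a \in \dot R_i/G$. Hence the relational structures $N/H$ and $(\alpha, \{\dot R_i/G\})$ are literally equal, and they are evaluated inside one and the same universe $V^{\bbP * \dot{\mathbb{B}}}$; since full semantics interprets the second order quantifiers over $\mathcal{P}(\alpha)$ of that universe, the two structures satisfy the same second order sentences. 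The displayed forcing statement gives $(\alpha, \{\dot R_i/G\}) \models \varphi$ there, so $N/H \models \varphi$ for every $\mathbb{C}$-generic $H$, which is exactly what it means for $N$ to be a model of $\varphi$ in $V$.

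The only delicate point is the absorption step: one must confirm that the single algebra $\mathbb{C} \in V$ genuinely reproduces the composite of $\bbP$ followed by $\dot{\mathbb{B}}$, and that the canonical interpretation $N/H$ agrees with $M/G$ on the second coordinate. Everything else is routine bookkeeping with the forcing theorem, and the preservation of full-semantics satisfaction is immediate once the two relational structures are recognized as identical objects in a common generic extension.
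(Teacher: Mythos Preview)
Your proof is correct and is essentially identical to the paper's: both take $\mathbb{C}$ to be the Boolean completion of the iteration $\bbP * \dot{\mathbb{B}}$ and define $R_i^N(\bar a)$ as the $\mathbb{C}$-Boolean value of ``$\bar a \in \dot R_i/\dot G$''. The paper dismisses the verification that $N \models \varphi$ as straightforward, whereas you spell out explicitly why $N/H$ coincides with $M/G$ in the common extension, but the underlying argument is the same.
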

\begin{proof}
Fix  $\bbP$-names $\dot{\mathbb B}$ and  $\{\dot R_i^M\}$
such that $\Vdash_\bbP$``$M=(\alpha, \dot{\mathbb B}, \{\dot R_i^M\})$ is a model of $\varphi$''.
Let $\bbC$ be the completion of the forcing product $\bbP* \dot{\mathbb{B}}$,
and define the structure $N=(\alpha, \bbC, \{R_i^N\})$
by: For each $\vec{x} \in {}^{m_i} \alpha$,
\[
%\begin{align*}
R_i^N(\vec{x}) = \text{ the Boolean value of the statement ``$\vec{x} \in \dot R_i^M/\dot H$'',}
\]
%& \text{
where $\dot H$ is a $\bbC$-name for a canonical $(V^\bbP, \dot{\mathbb B})$-generic filter.
%\end{align*}

It is straightforward to show that
$N$ is a model of $\varphi$.
\end{proof}
\begin{prop}\label{9.3}
If $\ka$ is generically extendible,
then $h^{2b} \le \ka$.
\end{prop}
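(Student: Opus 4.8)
The plan is to show that if $\ka$ is generically extendible and $\varphi$ is a sentence of Boolean valued second order logic with a model of cardinality $\ge \ka$, then $\varphi$ has models of arbitrarily large cardinality. So fix an arbitrary target ordinal $\delta > \ka$; I want to produce a model of $\varphi$ of cardinality $\ge \delta$. First I would observe that, by Lemma \ref{basic boolean model}, having a model of $\varphi$ in some generic extension is equivalent to having a model in $V$, so it suffices to arrange a model in a suitable forcing extension. The strategy is to run the generic elementary embedding backwards: if $\varphi$ has a large model at level $j(\ka)$ in the target, elementarity pushes a correspondingly large model down to $\ka$ in the domain, and conversely a model existing below $\ka$ reflects upward. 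I expect the correct direction to be the latter --- reflecting the existence of a \emph{large} model downward through $j$ to contradict the failure of the Hanf bound, or equivalently transferring a small-cardinality model upward via $j$ to obtain a large one.

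More concretely, I would argue as follows. Suppose toward a contradiction that $\varphi$ has a model of cardinality $\ge \ka$ but fails to have models of arbitrarily large cardinality; let $\delta$ be an ordinal exceeding the cardinality of every model of $\varphi$, and fix $\alpha > \max(\ka, \delta)$ large enough that $V_\alpha$ reflects the relevant facts (in particular that $V_\alpha$ contains a witnessing model of cardinality $\ge \ka$ and ``knows'' that no model of $\varphi$ has cardinality $\ge \delta$). Using generic extendibility for this $\alpha$, take a poset $\bbQ$ and a $(V,\bbQ)$-generic $H$ so that in $V[H]$ there is $\beta > \alpha$ and an elementary embedding $j:V_\alpha \to V_\beta^{V[H]}$ with critical point $\ka$ and $\alpha < j(\ka)$. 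The key point is that the statement ``there is a model of $\varphi$ of cardinality $\ge \ka$'' is witnessed inside $V_\alpha$ by some structure $M=(A,\mathbb B,\{R_i^M\})$ with $\size A \ge \ka$, and by elementarity $j(M)$ is a model of $\varphi$ in $V_\beta^{V[H]}$ whose underlying set has cardinality $\ge j(\ka) > \alpha > \delta$.

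Now I would apply Lemma \ref{basic boolean model} in reverse, reflecting through $j$. Since $V_\beta^{V[H]}$ believes there is a model of $\varphi$ of cardinality $\ge j(\ka)$, and $j(\ka) > \alpha \ge \delta$, the elementary preimage statement is that $V_\alpha$ believes there is a model of $\varphi$ of cardinality $\ge \ka$; but I need the \emph{opposite} transfer. The cleaner route is to read off, from the existence of $j(M)$ of cardinality $\ge \delta$ in the generic extension $V[H]$, that $\varphi$ has a model of cardinality $\ge \delta$ in $V[H]$, whence by Lemma \ref{basic boolean model} $\varphi$ has a model of cardinality $\ge \delta$ in $V$ itself. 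This contradicts the choice of $\delta$ as a strict upper bound on the cardinalities of models of $\varphi$. Hence no such $\delta$ exists, i.e. $\varphi$ has models of arbitrarily large cardinality, and therefore $h^{2b} \le \ka$.

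The main obstacle I anticipate is bookkeeping the interaction between the Boolean parameter $\mathbb B$ and the embedding $j$: the model $M$ carries a complete Boolean algebra $\mathbb B$, so $j(M)$ carries $j(\mathbb B)$, and I must check that ``$M$ is a model of $\varphi$'' is genuinely an elementary property of $V_\alpha$ that is preserved and reflected --- that is, that the forcing statement ``$\mathbb B$ forces $M/\dot G \models \varphi$'' is absolute enough to be moved by $j$ and then collapsed back down by Lemma \ref{basic boolean model}. I would handle this by choosing $\alpha$ large enough that $V_\alpha$ is a model of a sufficient fragment of $\ZFC$ so that satisfaction in the full semantics and the Boolean forcing relation are both definable inside $V_\alpha$, making the whole assertion first-order over $V_\alpha$ and hence subject to the elementarity of $j$. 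With that absoluteness in hand, the reflection argument goes through and the bound $h^{2b} \le \ka$ follows.
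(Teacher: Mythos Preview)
Your proposal is correct and follows essentially the same route as the paper: push a model $M$ of cardinality $\ge \ka$ through the generic embedding $j$ to obtain $j(M)$ of cardinality $\ge j(\ka) > \alpha$ in the extension, then invoke Lemma~\ref{basic boolean model} to pull a model of the same cardinality back into $V$. The paper presents this directly rather than by contradiction, and handles your ``main obstacle'' simply by observing that $V_\beta^{V[H]}$ is a rank initial segment of $V[H]$, so being a model of $\varphi$ there is the same as being one in $V[H]$; your middle paragraph about ``reflecting through $j$'' is an unnecessary detour, since you never actually need elementarity in the downward direction.
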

\begin{proof}
Take a sentence $\varphi$, and suppose $\varphi$ has a model
$M=(A, \mathbb{B}, \{R_i^M\})$ of cardinality $\la \ge \ka$.
We may assume that $A$ is $\la$.
Fix an arbitrary large $\alpha$,
and take a poset $\bbP$ which forces
that there is $\beta$ and an elementary embedding $j:V_\alpha \to V_\beta^{V^\bbP}$
with critical point $\ka$ and $\alpha<j(\ka)$.

Consider $j(M)=(j(\la), j(\mathbb{B}), \{j(R_i^M)\})$, which is a model of $\varphi$ in $V_\beta^{V^\bbP}$.
Since $V_\beta^{V^\bbP}$ is a rank initial segment of $V^\bbP$,
$j(M)$ is in fact a model of $\varphi$ in $V^\bbP$ with cardinality $j(\la)>\alpha$.
By Lemma \ref{basic boolean model},
in $V$, we can find a model $N$ of $\varphi$ with cardinality $j(\la) > \alpha$.
\end{proof}

\begin{prop}\label{9.4}
If  $\ka$ is generically extendible,
then $h^{2b} <\ka$.
\end{prop}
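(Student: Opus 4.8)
By Proposition~\ref{9.3} we already have $h^{2b}\le\ka$, so it suffices to find a single cardinal $\lambda<\ka$ that is still a Hanf bound, i.e.\ such that every sentence with a Boolean model of cardinality $\ge\lambda$ has Boolean models of arbitrarily large cardinality. Since any sentence uses only finitely many relation symbols, up to relabelling there are only countably many sentences of Boolean valued second order logic. The plan is therefore to attach to each sentence $\varphi$ without arbitrarily large models a strict bound $\delta(\varphi)<\ka$ on the cardinalities of its Boolean models, and then to set $\lambda=\sup_\varphi\delta(\varphi)$, the supremum ranging over these countably many $\varphi$. The regularity of $\ka$ will force $\lambda<\ka$, and any sentence with a Boolean model of cardinality $\ge\lambda$ must then have arbitrarily large models (otherwise its bound $\delta(\varphi)\le\lambda$ would exclude a model of cardinality $\ge\lambda$); so $\lambda$ is a Hanf bound and $h^{2b}\le\lambda<\ka$.

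First I record that $\ka$ is regular. If not, fix in $V$ a cofinal map $c:\mu\to\ka$ with $\mu=\cf(\ka)<\ka$, pick any $\alpha>\ka$, and use generic extendibility to get a poset $\bbP$ forcing an elementary embedding $j:V_\alpha\to V_\beta^{V^\bbP}$ with critical point $\ka$ and $\alpha<j(\ka)$. As $c\in V_{\ka+1}\subseteq V_\alpha$ and every $\xi<\mu$ together with its value $c(\xi)<\ka$ lies below the critical point, $j(c)(\xi)=c(\xi)$ for all $\xi<\mu$, so the range of $j(c)$ is contained in $\ka<j(\ka)$; this contradicts the elementarity of $j$, which forces $V_\beta^{V^\bbP}$ to believe $j(c)$ is cofinal in $j(\ka)$. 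Hence $\ka$ is regular, and in particular $\cf(\ka)>\om$, so the countable supremum $\lambda=\sup_\varphi\delta(\varphi)$ of ordinals below $\ka$ indeed stays below $\ka$.

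It remains to produce the strict bounds $\delta(\varphi)$, and this is where generic extendibility is used in an essential way. Fix a sentence $\varphi$ without arbitrarily large models, and suppose toward a contradiction that it has no strict bound below $\ka$; that is, $V$ satisfies ``for every $\delta<\ka$ there is a Boolean model of $\varphi$ of cardinality $\ge\delta$''. Choose $\alpha>\ka$ with $V_\alpha\prec_{\Sigma_n}V$ for an $n$ large relative to the complexity of this statement, and take $j:V_\alpha\to V_\beta^{V^\bbP}$ as above. By $\Sigma_n$-elementarity $V_\alpha$ satisfies the displayed statement; applying the (fully elementary) embedding $j$, which fixes $\varphi$ and $\om$, shows that $V_\beta^{V^\bbP}$ satisfies the same statement with $\ka$ replaced by $j(\ka)$. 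Instantiating the bound at $\delta=\ka<j(\ka)$ produces, inside $V^\bbP$, a Boolean model of $\varphi$ whose universe is an ordinal $\eta\ge\ka$. By Lemma~\ref{basic boolean model}, applied with $\eta$ in the role of $\alpha$, such a Boolean model already exists in $V$; since $\ka$ is a cardinal of $V$ and $\eta\ge\ka$, it has cardinality $\ge\ka$, and Proposition~\ref{9.3} then gives $\varphi$ Boolean models of arbitrarily large cardinality, contradicting the choice of $\varphi$. Hence each such $\varphi$ has a strict bound $\delta(\varphi)<\ka$, as desired.

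The step I expect to be the main obstacle is the reflection in the last paragraph. A Boolean model of cardinality $<\ka$ may carry a complete Boolean algebra far larger than its universe, so one cannot simply collect the witnessing models into $V_\alpha$; the satisfiability statement must instead be transported down through $V_\alpha\prec_{\Sigma_n}V$ and back up across $\bbP$ by means of Lemma~\ref{basic boolean model}. It is precisely the non-absoluteness of the full second-order semantics (which prevents a naive L\"owenheim--Skolem argument) that makes these two already-established tools necessary.
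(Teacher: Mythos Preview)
Your proof is correct. The paper organizes the argument differently: it splits into the cases $\ka$ successor and $\ka$ limit. In the successor case $\ka=\mu^+$ it shows directly that $h^{2b}\le\mu$: given a Boolean model of $\varphi$ on $\mu$, the critical point of $j$ lies above $\mu$, so $j$ fixes the universe, while in $V^{\bbP}$ the ordinal $\alpha<j(\ka)$ has cardinality $\mu$; re-indexing and Lemma~\ref{basic boolean model} then yield models of $\varphi$ on arbitrarily large ordinals, without invoking either the regularity of $\ka$ or the countability of the set of sentences. In the limit case the paper argues essentially as you do, showing that each bounded spectrum is bounded below $\ka$ by applying $j$ to the set $\mathrm{sp}(\varphi)\cap\ka$ rather than to a reflected first-order statement; the countable supremum then stays below $\ka$, tacitly using uncountable cofinality. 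Your route is more uniform: you isolate the regularity of $\ka$ up front and make the reflection step explicit via $V_\alpha\prec_{\Sigma_n}V$, which cleanly handles the worry you flag about large Boolean algebras. Both approaches are fine; the paper's successor case has the minor advantage of producing the explicit Hanf bound $\mu$ without the sentence-counting step.
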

\begin{proof}

For a sentence $\varphi$ of Boolean valued second order logic,
let $\mathrm{sp}(\varphi)=\{\la \mid \varphi$ has a model of cardinality $\la\}$.

First suppose $\ka$ is a successor cardinal, say $\ka=\mu^+$.
Let $\varphi$ be a sentence of Boolean valued second order logic.
It is enough to show that if $\mu \in \mathrm{sp}(\varphi)$
then $\mathrm{sp}(\varphi)$ is unbounded in the ordinals.
Take a model $M=(\mu, \mathbb B, \{R^M_i\})$ of $\varphi$.

Fix a large $\alpha>\ka$,
and take a poset $\bbP$ which forces that:
There is $\beta$ and an elementary embedding $j:V_\alpha \to V_\beta^{V^\bbP}$.
Then $j(M)$ is of the form $(\mu, j(\mathbb{B}), \{j(R^M_i)\})$, and in $V^{\bbP}$ it is a model of $\varphi$.
In $V^{\bbP}$, $\mu$ is a cardinal and $\size{\alpha}=\mu$.
Then it is easy to take
a model of $\varphi$ which is of the form $N=(\alpha, j(\mathbb B), \{R^N_i\})$.
By Lemma \ref{basic boolean model}, in $V$, we can take a model of $\varphi$ of cardinality $\alpha$.

Next suppose $\ka$ is a limit cardinal.
Since there are countably many sentences,
it is enough to see that
if $\sup(\mathrm{sp}(\varphi) \cap \ka)=\ka$,
then $\mathrm{sp}(\varphi)$ is unbounded in the ordinals.

Fix a large $\alpha>\ka$,
and take a poset $\bbP$ which forces that:
There is $\beta$ and an elementary embedding $j:V_\alpha \to V_\beta^{V^\bbP}$.
By the elementarity,
$j(\mathrm{sp}(\varphi) \cap \ka)$ is unbounded in $j(\ka)$,
hence we can take a model $M=(\gamma, \mathbb B, \{R_i^M\})$ of $\varphi$ with 
$\gamma>\alpha$.
By Lemma \ref{basic boolean model} again,
in $V$, we can take a model of $\varphi$ with cardinality $\gamma>\alpha$.
\end{proof}

%By Proposition \ref{2.5},
%$\om_1$ can be  generically extendible.
%Thus we have the following result.

\begin{cor}
If $\om_1$ is generically extendible, 
then $h^{2b}=\om$. 
%In particular, it is consistent that
%$h^{2b}=\om$.
% and $gh^{2b}=\om_1$.
\end{cor}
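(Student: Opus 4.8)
The plan is to combine Proposition \ref{9.4} with an elementary lower bound computation. Since $\om_1$ is generically extendible by hypothesis, Proposition \ref{9.4} applied with $\ka=\om_1$ gives immediately $h^{2b}<\om_1$. As $h^{2b}$ is by definition a cardinal, and the only cardinals below $\om_1$ are the finite ones together with $\om$, this already yields $h^{2b}\le\om$.

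It remains to establish the reverse inequality $h^{2b}\ge\om$, that is, to verify that no finite cardinal can serve as the Hanf number. First I would fix $n<\om$ and let $\varphi_n$ be the first order (hence Boolean valued second order) sentence in the empty relational language asserting ``there are exactly $n$ elements'', say $\exists x_1\dotsc\exists x_n\bigl(\bigwedge_{i<k\le n}x_i\neq x_k\wedge\forall y\bigvee_{i\le n}y=x_i\bigr)$. The decisive point is that the cardinality of a Boolean valued structure $N=(A,\mathbb{B},\{R^N_i\})$ is $\size{A}$ by definition, and this is preserved under passage to any quotient $N/G$; hence $N$ is a model of $\varphi_n$ precisely when $\size{A}=n$. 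Consequently $\varphi_n$ has a model of cardinality $n$ (take $\size{A}=n$) but none of cardinality $>n$, so the finite cardinal $n$ fails the defining property of $h^{2b}$. As this holds for every $n<\om$, we obtain $h^{2b}\ge\om$, and therefore $h^{2b}=\om$.

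I expect no serious obstacle: all the strength is already contained in Proposition \ref{9.4}, and the only point requiring care is confirming that the finite cardinality sentence $\varphi_n$ behaves under the Boolean valued semantics exactly as in ordinary first order logic. This is clear, since the underlying universe $A$, and hence its cardinality, does not depend on the generic filter used to interpret the relation symbols.
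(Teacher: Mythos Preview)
Your proof is correct and follows the same line as the paper, which states the corollary without proof as an immediate consequence of Proposition~\ref{9.4}. The only difference is that you spell out the trivial lower bound $h^{2b}\ge\om$ via the sentences $\varphi_n$, a point the paper leaves implicit (presumably because Hanf numbers are understood to be infinite, or because this direction is standard); your verification that $\varphi_n$ behaves correctly under the Boolean valued semantics---since the universe $A$ is unaffected by the generic filter---is the right justification.
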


Now we point out that the statement $h^{2b}=\om$  does not have  a large cardinal strength.

\begin{prop}\label{h2b ctble}
There is a forcing extension in which $h^{2b}=\om$ holds.
\end{prop}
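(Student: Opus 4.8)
The goal is to obtain $h^{2b}=\om$ by forcing over an arbitrary model of $\ZFC$, with no large cardinal hypothesis whatsoever; the content of the statement is that, unlike the compactness number $\ka^{2b}$, smallness of the Hanf number is cheap. The obvious route through the preceding corollary — force $\om_1$ to be generically extendible — is \emph{not} available here, since by Proposition \ref{2.2} the generic extendibility of $\om_1$ already carries the consistency strength of a virtually extendible cardinal. Instead, the plan is to reproduce the mechanism of Proposition \ref{9.4} while manufacturing the required reflection from an ordinary reflecting cardinal that $\ZFC$ proves to exist, in place of a generic elementary embedding.

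First I would isolate the logical complexity of the relevant predicate. For a fixed sentence $\varphi$, the assertion ``$M$ is a model of $\varphi$'' says that the complete Boolean algebra of $M$ forces a single fixed second order statement about $M/\dot G$; uniformly in a code for $\varphi$ this is expressible by a formula of some fixed complexity, say $\Sigma_n$. By the reflection theorem there is, in $\ZFC$, a proper class of cardinals $\ka$ with $V_\ka \prec_{\Sigma_n} V$, and one may take such a $\ka$ with whatever cofinality behaviour the argument requires. Consequently, for $M \in V_\ka$, whether $M$ is a model of $\varphi$, and whether $\la \in \mathrm{sp}(\varphi)$ for $\la<\ka$, is decided correctly inside $V_\ka$; this correctness is designed to play the role that the elementarity of $j$ played in Proposition \ref{9.4}.

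Next I would force. Fixing such a $\ka$, the plan is to collapse the cardinals below $\ka$ by a Levy-style collapse so that, in the extension, the cardinality gaps that witness bounded spectra are erased, and then to check sentence by sentence that every $\varphi$ possessing an infinite model acquires models of unbounded cardinality. For each such $\varphi$ one combines two ingredients: the correctness of $\ka$, which guarantees that the existence of a model of $\varphi$ of a given size reflects the true situation rather than being an accident of $V_\ka$; and Lemma \ref{basic boolean model}, which converts a poset forcing ``$\varphi$ has a model with domain $\alpha$'' into an actual Boolean valued model of cardinality $\alpha$ in the universe at hand. Since there are only countably many sentences, running this uniformly yields $\mathrm{sp}(\varphi)$ unbounded whenever $\varphi$ has an infinite model, i.e. $h^{2b}=\om$.

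The main obstacle — and the step that must be executed with care — is exactly the one that the generic embedding handled for free in Proposition \ref{9.4}: the preservation of full second order satisfaction. Passing a fixed model $M$ of $\varphi$ to a larger domain, or into a further forcing extension, need not preserve ``$M$ is a model of $\varphi$'', because the full semantics quantifies over the now-larger power set of the domain, so that a $\Pi$-part of $\varphi$ may acquire new counterexamples. In Proposition \ref{9.4} this was circumvented because it was $j(M)$, not $M$ itself, that elementarity certified to be a model of $\varphi$. Here the same effect must be produced from the $\Sigma_n$-correctness of $\ka$ together with Lemma \ref{basic boolean model}: rather than transport a fixed model one exhibits a poset forcing the existence of a model of $\varphi$ on the desired larger domain and reflects it back, using correctness to ensure the witness is not lost. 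Arranging the collapse so that it preserves both $\ka$ and a usable form of the reflection $V_\ka \prec_{\Sigma_n} V$, and thereby genuinely driving every infinite spectrum to be unbounded, is where the real work lies.
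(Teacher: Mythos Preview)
Your ingredients are exactly right --- a $\Sigma_n$-correct cardinal $\ka$, a collapse, and Lemma~\ref{basic boolean model} --- but your final paragraph mislocates the difficulty, and in doing so points the execution in the wrong direction. You propose to collapse \emph{below} $\ka$ while preserving $\ka$ and a form of $V_\ka \prec_{\Sigma_n} V$ in the extension; the paper instead forces with $\col(\om,\ka)$, making $\ka$ itself countable, and never uses any reflection in $V[G]$ at all. The reflection is used only in the ground model $V$.

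The mechanism is this. Once $\ka$ is countable in $V[G]$, any sentence $\varphi$ with an infinite model there has one whose domain is an ordinal $\alpha>\ka$. Hence in $V$ the statement ``$\col(\om,\ka)$ forces that $\varphi$ has a model on some ordinal $>\ka$'' is true; $\Sigma_n$-correctness of $\ka$ in $V$ then yields a proper class of $\la$ for which $\col(\om,\la)$ forces the analogous statement. Picking such a $\la$ above any target size, one passes to a further collapse $V[H]\supseteq V[G]$, finds the large model there, and pulls it back to $V[G]$ via Lemma~\ref{basic boolean model}. So the obstacle you flag --- preserving second-order satisfaction across extensions --- is handled entirely by that lemma, and the problem of preserving $V_\ka\prec_{\Sigma_n}V$ after forcing (which would be genuinely delicate, and would likely require $\ka$ inaccessible) simply does not arise. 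Your plan to keep $\ka$ alive is the step to drop.
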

\begin{proof}
Fix  a large $n<\om$ and $\ka$ with $V_\ka \prec_{\Sigma_n} V$.
Take a $(V,\col(\om, \ka))$-generic $G$.
We shall show that $h^{2b}=\om$ in $V[G]$.

In $V[G]$, take a sentence $\varphi$ and 
an infinite model $M=(A, \mathbb{B}, \{R^M_i\})$ of $\varphi$.
Since $\ka$ is countable in $V[G]$, we may assume that $A$ is an ordinal $\alpha>\ka$.
Then the following claim follows from $\Sigma_n$-elementarity of $V_\ka$.
\begin{claim}
In $V$, there is an arbitrary large $\la$
such that $\col(\om, \la)$ forces that
``there is $\beta>\la$ and a model $N=(\beta, \mathbb{C}, \{R^N_i\})$ of $\varphi$''.
\end{claim}
In $V$, fix such a $\la$ with $\la>\alpha$.
Then we can take a $(V, \col(\om, \la))$-generic $H$ with $G \in V[H]$,
so $V[H]$ is a generic extension of $V[G]$.
In $V[H]$, there is a model $\varphi$ which is of the form
$N=(\beta, \mathbb{C}, \{R^N_i\})$ for some $\beta>\la$.
By Lemma \ref{basic boolean model},
in $V[G]$ we can find a model
$\tilde{N}=(\beta, \tilde{\mathbb{C}}, \{R^{\tilde{N}}_i\})$ of $\varphi$.
\end{proof}

The existence of a generically extendible cardinal  has a large cardinal strength,
hence we have:
\begin{cor}
It is consistent that $h^{2b}=\om$ and there is no generically extendible cardinal.
\end{cor}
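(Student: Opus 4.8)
The plan is to combine the consistency of $h^{2b}=\om$ from Proposition~\ref{h2b ctble} with the fact that generic extendibility carries genuine large-cardinal strength, the latter being captured by Proposition~\ref{2.2}. The crucial asymmetry is that $h^{2b}=\om$ can be forced over \emph{any} ground model, using only the $\Sigma_n$-reflection guaranteed by $\ZFC$, whereas a generically extendible cardinal reflects an inaccessible into $L$. So I would arrange the ground model so that $L$ has no inaccessible, and then observe that the collapse employed in Proposition~\ref{h2b ctble} cannot repair this defect.

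Concretely, I would start in a model $V$ of $\ZFC+V=L$ in which there is no inaccessible cardinal; such a model exists relative to the consistency of $\ZFC$ (pass to $L$ and, if it contains an inaccessible, cut below the least one, obtaining a level $L_\delta\models\ZFC+V=L$ with no inaccessible). Since every virtually extendible cardinal is inaccessible (as recorded at the beginning of Section~2), $V=L$ then has no virtually extendible cardinal. Now apply Proposition~\ref{h2b ctble}: fix $\ka$ with $V_\ka\prec_{\Sigma_n}V$ for a suitably large $n$ and force with $\col(\om,\ka)$ to obtain $V[G]$ with $h^{2b}=\om$. The point to emphasize is that this $\ka$ is merely a $\Sigma_n$-reflection point, whose existence follows from the Levy reflection theorem in $\ZFC$ and requires no inaccessible whatsoever.

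It remains to check that $V[G]$ has no generically extendible cardinal. Suppose toward a contradiction that some $\ka'$ is generically extendible in $V[G]$. By Proposition~\ref{2.2} applied inside $V[G]$, the cardinal $\ka'$ would be virtually extendible in $L^{V[G]}$. But forcing does not change the constructible universe, so $L^{V[G]}=L^{V}=V$, which has no inaccessible and hence no virtually extendible cardinal---a contradiction. Therefore $V[G]$ models $h^{2b}=\om$ together with ``there is no generically extendible cardinal,'' which is exactly the desired consistency.

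The only genuine obstacle is to rule out that the collapse $\col(\om,\ka)$ inadvertently manufactures a generically extendible cardinal, and this is precisely what the reflection of generic extendibility to $L$ (Proposition~\ref{2.2}) forbids: the absoluteness of $L$ under forcing pins the entire relevant large-cardinal content inside the unchanged ground model, where it has been deliberately excluded. One should also confirm that a suitable $\ka$ remains available in a model with no inaccessibles, which it does, since Proposition~\ref{h2b ctble} demands only $\Sigma_n$-elementarity of $\ka$ and not inaccessibility.
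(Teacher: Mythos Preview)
Your argument is correct and is precisely the approach the paper intends: the paper's proof is a one-line remark that generically extendible cardinals carry genuine large-cardinal strength (via Proposition~\ref{2.2}) while Proposition~\ref{h2b ctble} forces $h^{2b}=\om$ over any ground model, and you have simply spelled out the details of how to combine these two facts. Your added care in choosing a ground model $V=L$ with no inaccessible and invoking the absoluteness of $L$ under forcing makes explicit exactly what the paper leaves implicit.
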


Next we show the consistency of $h^{2b}=\om_1$.
\begin{lemma}\label{9.2}
If $\om_1^L=\om_1$,
then $h^{2b} \ge \om_1$.
\end{lemma}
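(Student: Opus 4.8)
The plan is to witness $h^{2b}>\om$ directly by producing a single sentence $\varphi$ of Boolean valued second order logic that has an infinite model but no arbitrarily large models. Since finite cardinals always fail the Hanf property (take sentences fixing $\size{A}=n$), we have $h^{2b}\ge\om$ automatically, so such a $\varphi$ forces $h^{2b}\ge\om_1$. The sentence will pin its domain, up to isomorphism, to the standard model of arithmetic and then assert, via full second order quantification over the reals, that $\om_1^L=\om_1$.

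Concretely, I would first express the statement ``$\om_1^L=\om_1$'' as a second order arithmetic sentence $\theta$ over $(\om,+,\cdot,<)$. Using that $X\in L$ is $\Sigma^1_2$, and that ``$X$ codes a well-order'' and comparison of order types are projective, let $\theta$ assert: for every real $X$ coding a well-order there is a real $Y\in L$ coding a well-order with $\ot(Y)\ge\ot(X)$. Since the $L$-reals code exactly the ordinals below $\om_1^L$, a model $W$ of set theory satisfies $\theta$ iff every real of $W$ codes an ordinal $<\om_1^L$, that is, iff $\om_1^L=\om_1^W$. Let $\varphi$ be the conjunction of the finitely many second order Peano axioms (which pin the structure up to isomorphism to $(\om,+,\cdot,<)$) together with $\theta$. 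For a Boolean valued structure $M=(A,\mathbb{B},\{R_i^M\})$, being a model of $\varphi$ then forces $A\cong(\om,\dots)$, so $A$ is countable in $V^{\mathbb{B}}$, and forces $\theta$ in $V^{\mathbb{B}}$, which by absoluteness of $L$ means $\om_1^L=\om_1^{V^{\mathbb{B}}}$.

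Next I would check the two ends of the spectrum $\mathrm{sp}(\varphi)$. For the lower end, the hypothesis $\om_1^L=\om_1$ gives that $(\om,+,\cdot,<)$ with the trivial Boolean algebra is a model of $\varphi$, so $\aleph_0\in\mathrm{sp}(\varphi)$ and $\varphi$ has an infinite model. For the upper end, fix any cardinal $\la\ge\om_1$ and suppose toward a contradiction that some $M=(A,\mathbb{B},\dots)$ with $\size{A}=\la$ is a model of $\varphi$. Then $\mathbb{B}$ forces $A\cong(\om,\dots)$, so $A$ is countable in $V^{\mathbb{B}}$; but $\size{A}^V=\la\ge\om_1^V=\om_1^L$, so $\om_1^L$ is collapsed to be countable in $V^{\mathbb{B}}$, whence $\theta$ fails there, contradicting that $\mathbb{B}$ forces $\theta$. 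Hence $\mathrm{sp}(\varphi)=\{\aleph_0\}$: the sentence $\varphi$ has a model of cardinality $\ge\om$ but none of cardinality $\ge\om_1$, so $\om$ does not satisfy the Hanf property and $h^{2b}\ge\om_1$.

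The main obstacle I expect is the careful formalization in the first step: verifying that ``$\om_1^L=\om_1$'' is faithfully captured by a genuine second order sentence $\theta$, and, crucially, that its truth under the full semantics over the forced copy of $\om$ inside $V^{\mathbb{B}}$ coincides with the metatheoretic statement $\om_1^L=\om_1^{V^{\mathbb{B}}}$; this rests on the absoluteness of $L$ and of the $\Sigma^1_2$ predicate $X\in L$ between $V$ and its generic extensions. Everything else is bookkeeping about how collapsing acts on $\om_1^L$. Note that only the lower-end computation uses the hypothesis $\om_1^L=\om_1$, while the upper bound $\mathrm{sp}(\varphi)\subseteq\{\aleph_0\}$ holds outright.
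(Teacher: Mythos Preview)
Your proposal is correct and takes essentially the same approach as the paper: both construct a sentence that, in any generic extension, forces the domain $A$ to be countable while simultaneously asserting $\om_1^L=\om_1$ (the paper phrases this as ``every countable ordinal is countable in $L$''), and then observe that a model of size $\ge\om_1=\om_1^L$ would collapse $\om_1^L$ and hence contradict the second conjunct. You are simply more explicit about the formalization, pinning the domain via second-order Peano and coding $\theta$ through the $\Sigma^1_2$ definability of $L$-reals, whereas the paper leaves these details implicit.
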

\begin{proof}
There is a sentence $\varphi$
such that for every structure $M$ of the form $(A, \mathbb B, \{R^M_i\})$,
$M$ is a model of $\varphi$ 
if and only if
$\mathbb B$ forces ``every countable ordinal is countable in $L$,
and $A$ is countable''.
Clearly $\varphi$ has a countable model
with cardinality $<\om_1^L$,
but has no model with cardinality $\ge \om_1^L$.
\end{proof}

\begin{prop}
Suppose $V=L$.
There is a forcing extension in which $h^{2b}=\om_1$ holds.
\end{prop}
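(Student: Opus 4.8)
The goal is to produce a forcing extension of $L$ in which $h^{2b}=\om_1$. By Lemma \ref{9.2}, since we start in $L$ and want $\om_1^L=\om_1$ to persist, the lower bound $h^{2b}\ge\om_1$ will hold automatically provided the forcing preserves $\om_1$ and does not collapse $\om_1^L$; so the real content is establishing the upper bound $h^{2b}\le\om_1$. The plan is therefore to find an $\om_1$-preserving forcing over $L$ after which every sentence $\varphi$ of Boolean valued second order logic having a model of cardinality $\ge\om_1$ has models of arbitrarily large cardinality.

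The strategy I would take mirrors the proof of Proposition \ref{h2b ctble}, replacing the role of $\col(\om,\ka)$ with a Levy collapse to $\om_1$. First I would fix, in $L$, a suitable cardinal $\ka$ with enough reflection, namely $V_\ka\prec_{\Sigma_n} V$ for a large $n$ (or more conveniently an inaccessible $\ka$ with this reflection property, which exists in $L$ by taking $\ka$ in the club of $\Sigma_n$-correct cardinals). Then I would force with $\col(\om_1,<\ka)$, the standard $\om_1$-closed collapse making $\ka=\om_2$. Being $\om_1$-closed, this forcing preserves $\om_1$ and adds no new reals, so $\om_1^L=\om_1$ holds in the extension and Lemma \ref{9.2} gives $h^{2b}\ge\om_1$. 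For the upper bound, given in $V[G]$ a sentence $\varphi$ with a Boolean model $M=(A,\mathbb B,\{R^M_i\})$ of cardinality $\ge\om_1$, I would, as in Proposition \ref{h2b ctble}, arrange $A$ to be an ordinal $\alpha$ with $\om_1\le\alpha<\ka$ using that $\ka$ has been collapsed to $\om_2$, and then run the $\Sigma_n$-reflection argument to find in $L$ arbitrarily large $\la$ such that $\col(\om_1,<\la')$ (for appropriate $\la'$ with $\la'=\la^+$ forced to be $\om_2$) forces the existence of a Boolean model of $\varphi$ of cardinality $>\la$. Absorbing $G$ into a further generic $H$ for the larger collapse, so that $V[H]$ extends $V[G]$, I would pull the resulting large model back down to $V[G]$ by Lemma \ref{basic boolean model}.

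Concretely, the reflection claim to prove is: in $L$, there are arbitrarily large cardinals $\la$ such that $\col(\om_1,<\la)$ forces ``there is $\beta>\la$ and a Boolean valued model $N=(\beta,\mathbb C,\{R^N_i\})$ of $\varphi$''. This should follow from $V_\ka\prec_{\Sigma_n} V$ by reflecting the statement ``$\col(\om_1,<\ka)$ forces the existence of a Boolean model of $\varphi$ of cardinality $\ge\om_1$'', exactly as the unstated claim in Proposition \ref{h2b ctble} is obtained, and then observing that in $V[H]$ one has $\om_1$ preserved and $\la$ collapsed to be smaller than the new $\om_2$, so the model can be taken on an ordinal $\beta>\la$. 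Once such a model lives in $V[H]$, Lemma \ref{basic boolean model} hands back a model in the smaller extension $V[G]$, and since $\beta>\la$ is arbitrarily large, $\mathrm{sp}(\varphi)$ is unbounded in the ordinals, giving $h^{2b}\le\om_1$.

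The main obstacle I anticipate is the bookkeeping in the reflection step: unlike the countable collapse, here I must ensure that collapsing with $\col(\om_1,<\la)$ genuinely produces the model on an \emph{ordinal} of the right size and that $G$ can be absorbed into a single generic $H$ for a larger collapse so that $V[H]$ is a generic extension of $V[G]$. This requires checking the standard absorption property of the Levy collapse at $\om_1$, namely that $\col(\om_1,<\la)$ over $L$ contains a $(L,\col(\om_1,<\ka))$-generic when $\la\ge\ka$, together with $\om_1$-closedness guaranteeing no new reals appear to disturb $\om_1^L=\om_1$. The elementarity and the pull-back via Lemma \ref{basic boolean model} are then routine, so the delicate point is purely the collapse-absorption and the correct formulation of the $\Sigma_n$-reflected claim.
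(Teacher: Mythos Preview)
Your overall plan—mirror Proposition~\ref{h2b ctble} with $\om_1$ in place of $\om$—is exactly right, but the specific collapse you chose creates a genuine gap in the reflection step. You force with $\col(\om_1,<\ka)$, the Levy collapse making $\ka=\om_2$, and then propose to place the given model on an ordinal $\alpha$ with $\om_1\le\alpha<\ka$. This is backwards from what the argument requires. In Proposition~\ref{h2b ctble} the essential move is to place the model on an ordinal $\alpha>\ka$, so that the statement $\Phi(\ka)$, namely ``$\col(\om,\ka)$ forces a model of $\varphi$ on some ordinal $>\ka$'', holds; it is $\Phi(\ka)$ together with $V_\ka\prec_{\Sigma_n}V$ that yields unboundedly many $\la$ with $\Phi(\la)$, and hence arbitrarily large models. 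With your forcing, $\ka$ becomes $\om_2$, so a model of cardinality exactly $\om_1$ can only sit on an ordinal $<\ka$, and the analogue of $\Phi(\ka)$ fails: all you know is ``$\col(\om_1,<\ka)$ forces a model of cardinality $\om_1$'', and reflecting that to arbitrarily large $\la$ still only gives models of cardinality $\om_1$. Your reflected claim asks for a model of cardinality $>\la$, but at $\ka$ there is nothing witnessing this, so the reflection never gets started. (Conversely, a model of cardinality $\ge\om_2=\ka$ cannot be placed on any $\alpha<\ka$, so your ``arrange $\alpha<\ka$'' also fails in that case.)

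The fix is precisely what the paper does: force with $\col(\om_1,\ka)$ rather than $\col(\om_1,<\ka)$. Then $\size{\ka}=\om_1$ in the extension, so \emph{every} model of cardinality $\ge\om_1$ can be placed on an ordinal $\alpha>\ka$, the statement $\Phi(\ka)$ holds, and the $\Sigma_n$-reflection runs verbatim as in Proposition~\ref{h2b ctble}. Absorption is then the routine fact that for $\la>\alpha$ one can find a $(V,\col(\om_1,\la))$-generic $H$ with $G\in V[H]$, after which Lemma~\ref{basic boolean model} pulls the large model back to $V[G]$. The obstacle you flagged (collapse absorption) is not the real difficulty; the choice of collapse is.
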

\begin{proof}
As in the proof of Proposition \ref{h2b ctble}, fix  a large $n<\om$ and $\ka$ with $V_\ka \prec_{\Sigma_n} V$.
In  this case, take a $(V,\col(\om_1, \ka))$-generic $G$.
$\om_1^{V[G]}=\om_1^L$, hence $h^{2b} >\om$ holds in $V[G]$.
To show that $h^{2b} \le \om_1$, take a sentence $\varphi$ and 
an uncountable model $M=(\alpha, \mathbb{B}, \{R^M_i\})$ where $\alpha>\ka$.

By the $\Sigma_n$-elementarity of $V_\ka$,
in $V$, there is $\la>\alpha$
such that $\col(\om_1, \la)$ forces that
``there is $\beta>\la$ and a model $N=(\beta, \mathbb{C}, \{R^N_i\})$ of $\varphi$''.
As before, in $V[G]$, we can find a model of $\varphi$ with cardinality $\beta$.
\end{proof}

A similar argument shows the consistency of $h^{2b}=\om_n$ for $n \in \om$.

Finally we prove that if $\om_1$ is generically extendible, then 
the statement $h^{2b}=\om$ is preserved by forcing.
\begin{prop}\label{7.7}
Suppose $\om_1$ is generically extendible.
Then for every generic extension $V[G]$ of $V$,
$h^{2b}=\om$ holds in $V[G]$.
\end{prop}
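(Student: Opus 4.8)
The plan is to reduce the statement "$h^{2b}=\om$ holds in $V[G]$" to the generic extendibility of $\om_1$ \emph{in $V[G]$}, and then apply the earlier machinery (essentially Proposition \ref{9.3} / its corollary) inside $V[G]$. The key observation is that generic extendibility of $\om_1$ is preserved downward into arbitrary generic extensions in a suitable sense: once we know $\om_1$ remains generically extendible in $V[G]$, the Corollary to Proposition \ref{9.4} gives $h^{2b}=\om$ in $V[G]$ immediately. So the whole content is to arrange that the right cardinal is generically extendible in $V[G]$.

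First I would fix a poset $\bbP$ and a $(V,\bbP)$-generic $G$. The subtlety is that $\bbP$ need not preserve $\om_1$, so $\om_1^{V[G]}$ may differ from $\om_1^V$; hence I cannot simply quote Proposition \ref{proper indes}. Instead I would argue directly about $h^{2b}$ in $V[G]$ by lifting an embedding. Fix a sentence $\varphi$ of Boolean valued second order logic and, in $V[G]$, an infinite model $M=(\alpha,\mathbb B,\{R_i^M\})$ of $\varphi$, where as usual we may take the underlying set to be an ordinal $\alpha$. Fix a large $\theta>\alpha$. Working in $V$, use that $\om_1$ is generically extendible to obtain a poset $\bbQ$ forcing an elementary embedding $j:V_\theta\to V_{\overline\theta}^{V^\bbQ}$ with critical point $\om_1^V$ and $j(\om_1^V)>\theta$. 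Since $\p(\bbP)^V$ can be collapsed to be countable in $V^\bbQ$ (absorbing $\bbP$ into a collapse as in Propositions \ref{9.1}--\ref{9.2+}), I would arrange that $V^\bbQ$ is a generic extension of $V[G]$ and that $G\in V^\bbQ$. Then $j(M)$, computed in $V_{\overline\theta}^{V^\bbQ}$, is a model of $\varphi$ of cardinality $j(\alpha)>\theta>\alpha$ living in $V^\bbQ$, since $V_{\overline\theta}^{V^\bbQ}$ is a rank initial segment. By Lemma \ref{basic boolean model}, applied over $V[G]$ with the composite poset taking $V[G]$ to $V^\bbQ$, there is in $V[G]$ a model of $\varphi$ of cardinality $j(\alpha)>\theta$. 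As $\theta$ was arbitrary, $\mathrm{sp}(\varphi)$ is unbounded in the ordinals in $V[G]$, which gives $h^{2b}=\om$ in $V[G]$.

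The step I expect to be the main obstacle is the bookkeeping that lets $V^\bbQ$ simultaneously (i) contain $G$ so that it extends $V[G]$, and (ii) carry the embedding $j$ with critical point $\om_1^V$ and $j(\om_1^V)>\theta$, even though $\bbP$ may have moved $\om_1$. The clean way is to choose $\bbQ$ in $V$ \emph{after} absorbing $\bbP$: take $\bbQ$ to be (a completion of) $\bbP$ followed by the extendibility-witnessing forcing for $\om_1^V$ at level $\theta$, noting that $\Vdash_\bbP \size{\bbP}\le\alpha<\theta$ so $\bbP$ is harmless against the critical point $\om_1^V$. The point to verify carefully is that elementarity of $j$ transfers the existence of the $\varphi$-model up to cardinality $j(\alpha)$ and that Lemma \ref{basic boolean model} is invoked over the correct ground model $V[G]$ rather than $V$; once those are lined up, the remainder is routine and parallels the limit-cardinal case of Proposition \ref{9.4}.
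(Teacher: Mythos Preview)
Your proposal has a genuine gap: you write ``$j(M)$, computed in $V_{\overline\theta}^{V^\bbQ}$'', but $M$ is a structure living in $V[G]$, whereas the domain of $j$ is $V_\theta^V$. The Boolean algebra $\mathbb{B}$ and the interpretations $R_i^M$ belong to $V[G]$, not to $V$, so $j(M)$ is simply undefined. To make sense of it you would have to lift $j$ to an embedding $V_\theta[G] \to V_{\overline\theta}^{V^\bbQ}[G']$ for some $j(\bbP)$-generic $G'$ with $j``G \subseteq G'$, and for an \emph{arbitrary} poset $\bbP$ there is no master-condition or chain-condition argument available to produce such a $G'$. Your remark that ``$\bbP$ is harmless against the critical point $\om_1^V$'' does not address this: the obstruction is not that the critical point moves, but that no compatible generic over $j(\bbP)$ has been supplied on the target side.

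The paper circumvents this with an extra application of Lemma~\ref{basic boolean model} in the reverse direction. Starting from the model $M \in V[G]$, it first pulls back to a model $N$ of $\varphi$ in $V$ on the same underlying ordinal (legitimate since $V[G]$ is a generic extension of $V$, so Lemma~\ref{basic boolean model} applies over the ground $V$). Now $N$ lies in the domain of $j$, so $j(N)$ is a genuine model of $\varphi$ in $V[H]$; a second application of Lemma~\ref{basic boolean model}, this time from $V[H]$ down to $V[G]$, then yields the required large model in $V[G]$. A secondary point you also glossed over: when the underlying set has size $\om$ one has $j(\om)=\om$, so your claim ``cardinality $j(\alpha)>\theta$'' fails outright; the paper handles this case separately, using that the chosen large ordinal becomes countable in $V[H]$.
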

\begin{proof}
Take a poset $\bbP$ and a $(V, \bbP)$-generic $G$.
In $V[G]$, fix a sentence $\varphi$ and a
model $M=(\la, \mathbb B, \{R^M_i\})$ of $\varphi$,
where $\la$ is an infinite cardinal.
Fix a large limit $\alpha>\la$.
We shall find a model of $\varphi$ with 
cardinality $ \ge \alpha$.

In $V$, by Lemma \ref{basic boolean model}, 
we can take a model $N=(\la, \mathbb C, \{R^N_i\})$ of $\varphi$. 
Take a poset $\bbQ$ and a $(V, \bbQ)$-generic $H$
such that in $V[H]$,
there is an elementary embedding $j:V_\alpha \to V_\beta^{V[H]}$ with
critical point $\om_1^V$ and $\alpha<j(\om_1^V)$. 
We may assume that $G \in V[H]$,
hence $V[H]$ is a generic extension of $V[G]$.
Consider the structure $j(N)=(j(\la), j(\mathbb{C}), \{j(R^N_i)\})$,
which is a model of $\varphi$ in $V[H]$.

First suppose that $\la$ is an uncountable cardinal.
By applying Lemma \ref{basic boolean model} between  $V[G]$ and $V[H]$,
we can find a model of $\varphi$ in $V[G]$ which is of the form $N'=(j(\la), \mathbb C', \{R^{N'}_i\} )$.
Since $j(\la)>j(\om_1) >\alpha$, $N'$ is a required large model of $\varphi$ in $V[G]$.

Next suppose $\la=\om$.
Then $j(N)$ is of the form $(\om, j(\mathbb{C}), \{j(R^N_i)\})$.
Since $\alpha$ is countable in $V[H]$,
we can take a model $\varphi$ which is of the form $\tilde{N}=(\alpha, \tilde{\mathbb C}, \{R_i^{\tilde{N}}\})$.
As in the uncountable case,
there is a model $\varphi$ of cardinality $\alpha$ in $V[G]$.
\end{proof}

%By this absoluteness, we can show that 
%the countability of $h^{2b}$ is independent from the existence of a generically extendible cardinal.

\begin{cor}
It is consistent that $h^{2b}=\om$ and $\om_2$ is the least generically extendible cardinal.
\end{cor}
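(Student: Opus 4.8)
The plan is to obtain the desired model as a generic extension of a universe $V$ in which $\om_1$ is generically extendible, so that $h^{2b}=\om$ is delivered for free by Proposition \ref{7.7}, while separately arranging that $\om_2$ becomes the least generically extendible cardinal. The governing constraint is Proposition \ref{2.2}: a generically extendible cardinal is virtually extendible in $L$. Since I want generic extendibility to appear at two distinct places---at $\om_1$ in the base model $V$, and at $\om_2$ in the final model---I would start from a model carrying \emph{two} virtually extendible cardinals. By the fact that virtual extendibility reflects to $L$ (\cite{BGS}, \cite{GS}), I may assume $V=L$ and fix the two least virtually extendible cardinals $\mu_0<\ka$; in particular there is no virtually extendible cardinal of $L$ strictly between $\mu_0$ and $\ka$, and none below $\mu_0$.

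First I would turn $\mu_0$ into a generically extendible $\om_1$. Let $V=L[G_0]$ where $G_0$ is $(L,\col(\om,<\mu_0))$-generic. By Proposition \ref{2.5}, $\mu_0=\om_1^V$ is generically extendible in $V$; this $V$ will be the base model to which Proposition \ref{7.7} is applied. Because $\size{\col(\om,<\mu_0)}=\mu_0<\ka$, a L\'evy--Solovay argument shows that $\ka$ remains virtually extendible in $V$.

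Next, over $V$ I would collapse $\mu_0$ and then run the iteration $\bbP_\ka$ of Theorem \ref{4.8}. Put $W=V[\col(\om,\mu_0)]$; then $\mu_0$ is countable in $W$, $\om_1^W=(\mu_0^+)^L$, and (again by L\'evy--Solovay, the total collapse having size $\mu_0<\ka$) $\ka$ is still virtually extendible in $W$. Applying Theorem \ref{4.8} inside $W$ produces the forcing $\bbP_\ka$ after which $\om_1^W$ is preserved and $\ka=\om_2$ is generically extendible; let $M$ be the resulting model, so that $\om_1^M=(\mu_0^+)^L$ and $\om_2^M=\ka$. The collapse of $\mu_0$ is the decisive step: its sole purpose is to make $\mu_0$ countable, so that it can no longer be a generically extendible cardinal of $M$, clearing the way for $\ka$ to be least.

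Finally I would verify the two assertions. For minimality, Proposition \ref{2.2} shows that every generically extendible cardinal of $M$ is virtually extendible in $L$; the only such $L$-cardinals that are $\le\ka$ are $\mu_0$ and $\ka$, and $\mu_0$ is now countable, while $\om_1^M=(\mu_0^+)^L$ is a successor $L$-cardinal and hence not virtually extendible in $L$. Therefore $\om_2^M=\ka$ is the least generically extendible cardinal of $M$. For the Hanf number, $M$ is by construction a generic extension of $V$ (namely $M=V[\col(\om,\mu_0)*\bbP_\ka]$), and $\om_1^V=\mu_0$ is generically extendible in $V$, so Proposition \ref{7.7} yields $h^{2b}=\om$ in $M$. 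The point I expect to be the crux is precisely that Proposition \ref{7.7} is insensitive to the fate of $\om_1$ in the extension: forcing $\om_2$ to be the least generically extendible cardinal requires removing generic extendibility from the smaller cardinal $\mu_0$, which I accomplish by collapsing it, and although $\om_1$ is thereby no longer generically extendible in $M$, the model remains a generic extension of the fixed base $V$, keeping Proposition \ref{7.7} in force.
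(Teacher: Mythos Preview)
Your proposal is correct and follows essentially the same three-stage approach as the paper: starting from $L$ with two virtually extendible cardinals, collapse the smaller one to $\om_1$ (making it generically extendible and providing the base model for Proposition~\ref{7.7}), then collapse it further to be countable, and finally apply Theorem~\ref{4.8} to the larger cardinal to make it a generically extendible $\om_2$. Your write-up is in fact more explicit than the paper's in justifying why $\om_1$ is not generically extendible in the final model (via Proposition~\ref{2.2} and the fact that $(\mu_0^+)^L$ is not inaccessible in $L$), but the underlying argument is identical.
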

\begin{proof}
Suppose $V=L$, and let $\ka$ be a virtually extendible cardinal,
and $\la$ the minimal virtually extendible cardinal $>\ka$.
First collapse $\ka$ to $\om_1$.
By Proposition \ref{2.5},
$\ka=\om_1$ is  generically extendible in the extension.
In the extension, $\la$ remains  virtually extendible.
Next, collapse $\ka$ to be countable by the standard way.
In the second extension, $h^{2b}=\om$ 
by  Proposition \ref{7.7}, and $\la$ remains virtually extendible.
Finally, by Theorem \ref{4.8}, we can collapse $\la$ to  $\om_2$ so that 
$\la=\om_2$ is generically extendible in the extension.
In this third extension, 
$\om_2$ is generically extendible, $h^{2b}=\om$
by Proposition \ref{7.7}, and
$\om_1$ is not generically extendible.
\end{proof}

\begin{question}
Is it consistent that $h^{2b}=\om$ in $V$ but $h^{2b}>\om$ in some generic extension? 
\end{question}

\section*{Acknowledgments}
The author would like to thank Daisuke Ikegami,
as his questions led to the start of this research.
The author also thanks the referee for many valuable comments and useful suggestions.

\end{document}